\renewcommand{\:}{\colon}
\newtheorem{theorem}{Theorem}[section]
\newtheorem{corollary}[theorem]{Corollary}
\newtheorem{proposition}[theorem]{Proposition}
\newtheorem{lemma}[theorem]{Lemma}
\theoremstyle{definition}
\newtheorem{example}[theorem]{Example}
\newtheorem*{example*}{Example}
\newtheorem{remark}[theorem]{Remark}
\def\mathclap#1{\text{\hbox to 0pt{\hss$\mathsurround=0pt#1$\hss}}}
\newcommand{\T}{\ensuremath{\mathbb{T}}}
\renewcommand{\S}{\ensuremath{\mathbb{S}}}
\newcommand{\R}{\ensuremath{\mathbb{R}}}
\newcommand{\Z}{\ensuremath{\mathbb{Z}}}
\newcommand{\N}{\ensuremath{\mathbb{N}}}
\newcommand{\eps}{\varepsilon}
\newcommand{\abs}[1]{\left|#1\right|}
\newcommand{\Abs}[1]{\left\|#1\right\|}
\newcommand{\supp}{\ensuremath{\mathrm{supp}}}
\newcommand{\Fol}{\ensuremath{\mathcal F}}
\newcommand{\mc}{\mathcal}
\renewcommand{\phi}{\varphi}
\newcommand{\ssq}{\subseteq}
\renewcommand{\=}{\coloneqq}
\newcommand{\mef}{\T}
\newcommand{\tfg}[1]{[\![#1]\!]}
\newcommand{\Per}{\textrm{Per}}
\DeclareOldFontCommand{\bf}{\normalfont\bfseries}{\mathbf}
\DeclareOldFontCommand{\tt}{\normalfont\ttfamily}{\mathtt}
\DeclareOldFontCommand{\rm}{\normalfont\rmfamily}{\mathrm}
\let\@fnsymbol\@arabic
\title{\bf\Large The structure of mean equicontinuous group actions}
\author{
    \normalsize Gabriel Fuhrmann\thanks{Department of Mathematics, Imperial
        College London, UK. Email: {\tt gabriel.fuhrmann@imperial.ac.uk}}
    \and\normalsize Maik Gröger\thanks{Faculty of Mathematics, University of Vienna,
        Austria. Email: {\tt maik.groeger@univie.ac.at}}
    \and\normalsize Daniel Lenz\thanks{Department of Mathematics, University of Jena,
        Germany. Email: {\tt daniel.lenz@uni-jena.de}}}
\date{\vspace{-0.5cm}}
\begin{document}

\maketitle
\begin{abstract}
    We study mean equicontinuous actions of locally compact
    $\sigma$-compact amenable groups on compact metric spaces.
    In this setting, we establish the equivalence of mean equicontinuity
    and topo-isomorphy to the maximal equicontinuous factor
    and provide a characterization of mean equicontinuity of an action
    via properties of its product.
    This characterization enables us to show the equivalence of mean equicontinuity
    and the weaker notion of Besicovitch-mean equicontinuity in fairly high generality,
    including actions of abelian groups as well as minimal actions of general groups.
    In the minimal case, we further conclude that mean equicontinuity is equivalent
    to discrete spectrum with continuous eigenfunctions.
    Applications of our results yield a new class of non-abelian mean equicontinuous
    examples as well as a characterization of those extensions of mean
    equicontinuous actions which are still mean equicontinuous.
\end{abstract}

\section{Introduction}\label{sec:introduction}

Isometric actions on compact metric spaces constitute fundamental objects of study
in the field of dynamical systems.
In fact, despite possessing structurally simple dynamics, they relate to deep
problems of general mathematical interest.
Already rigid rotations on the circle have close connections to continued fraction
expansions (see, for example, \cite{Series1985}), the rich theory of discrepancy
of sequences (see, e.g., \cite{DrmotaTichy1997} and references therein), or the
Three Distance Problem and its versatile generalizations
(see, for instance, \cite{AlessandriBerthe1998}), to name but a few.
With their dynamical simplicity on the one hand and the
relevance of such problems on the other hand, it is natural to take
actions by isometries as a point of departure in the endeavor to understand
topological dynamical systems in general.

Actually, a substantial part of the abstract theory
of topological dynamics can be understood as dealing with the following issue:
given a general action which is not isometric, how close is this action to an isometric one?
An essential tool in answering this question is the so-called maximal
equicontinuous factor (or, topologically equivalent, the maximal isometric factor)
of a given action.
Now, with this canonical factor at hand, we may restate the above question in the
following way: what is the regularity of the corresponding factor map?

Of course, various regularity features can be (and have been) considered.
On the topological side, it is natural to investigate the existence of points
where this factor map is one-to-one and this leads to the notion of almost
automorphic actions \cite{Veech1965}.
Once an invariant measure $\mu$ is given, one can also ask for injectivity
of the factor map for almost all points  with respect to $\mu$ and in many contexts
this is referred to as regularity of the system.

With a more measure-theoretical flavor, we may study factor maps that establish a measure
isomorphy with respect to all invariant measures and their push-forward on the maximal equicontinuous factor.
This is the starting point of the current article and we present a
comprehensive treatment of actions which allow for such factor maps.
Our first main result gives a characterization of these actions in terms of a
weakening of isometry known
as mean equicontinuity (Theorem \ref{thm:structural result mean
equicontinuous systems}).
Our subsequent results then unfold the
notion of mean equicontinuity in terms of product systems (Theorem
\ref{thm:intro equivalence criterion of mean equi. involving
pointwise u.e}) and provide a spectral characterization of mean
equicontinuity (Theorem \ref{thm:spectral-characterization}) for minimal actions.
A priori, the concept of mean equicontinuity comes in two variants, one
known as Weyl-mean equicontinuity and the other as Besicovitch-mean equicontinuity.
Along the way, we derive sufficient conditions for these two notions
to agree (Theorem \ref{thm:F-mean-equicontinuity implies mean-equicontinuity}).

The concepts of Weyl- and Besicovitch-mean equicontinuity were introduced in \cite{LiTuYe2015}
for integer actions.
In fact, in this case the notion of Besicovitch-mean equicontinuity is immediately seen to be
equivalent to the concept of mean Lyapunov-stability which was already introduced
in 1951 by Fomin \cite{Fomin1951} in the context of $\Z$-actions with discrete spectrum.
Later, a first systematic treatment was carried out by Auslander \cite{Auslander1959}.

Our results tie in with various recent streaks of investigations:
for $\Z$-actions, there is the fundamental work of Downarowicz
and Glasner on mean equicontinuity \cite{DownarowiczGlasner2016}, providing a
detailed study in the minimal case.
Our results generalize these results from the group of integers to general locally
compact $\sigma$-compact amenable groups.
In the main structural characterization given in Theorem \ref{thm:structural result mean
equicontinuous systems}, we can also completely remove the minimality condition.
Further, in our treatment of the relation between Weyl- and Besicovitch-mean equicontinuity, we
can remove the minimality condition in many cases as well and thereby generalize
\cite{QiuZhao} which treats the case of general (that is, not necessarily minimal) $\Z$-actions.

Concerning abelian groups, mean equicontinuity and its relation to the spectral theory
of dynamical systems (in particular, to discrete spectrum) has been studied by
various groups \cite{Garcia-Ramos2017,Lenz,Garcia-RamosMarcus2019}.
Indeed, these works feature weaker versions of mean equicontinuity in order to
characterize discrete spectrum.
So, the restriction of our spectral result to the abelian case, given in
Corollary \ref{cor:spectral-characterization-abelian-case},  can be
seen as a natural complement to these works. More specifically, our
spectral characterization shows --in the minimal case--  that mean
equicontinuity is equivalent to unique ergodicity and discrete
spectrum together with the continuity of eigenfunctions (see also
\cite{DownarowiczGlasner2016} for the case of $\Z$-actions).

Discrete spectrum is particularly relevant in the context of
aperiodic order. This field has attracted substantial attention in
the last decades due to the discovery of substances --later called
quasicrystals-- featuring this type of order (see the recent survey
collection \cite{KellendonkLenzSavinien2015} and the monograph
\cite{BaakeGrimm2013} for background and further details). A basic
quantity in the study of aperiodic order is the diffraction measure
of an aperiodic configuration  and a key task is  to understand when
the diffraction measure is a pure point measure. Due to a collective
effort over the last twenty years, this turns out to be equivalent
to discrete spectrum of an associated dynamical systems, see for
instance \cite{BaakeLenz2017} for a recent survey.

There is no axiomatic framework for aperiodic order (yet). However, typical
systems studied in the context of aperiodic order have further
regularity properties such as minimality and unique ergodicity. As
discussed below (see Remark \ref{rem:mean equicontinuity and
diffraction measure}), one may argue that our spectral
characterization shows that mean equicontinuous systems are the
``right" systems to model minimal systems with aperiodic order.



\subsection{Basic notation and definitions}\label{sec:Notation}

We call a triple $(X,G,\alpha)$ a \emph{(topological) dynamical system} if
$X$ is a compact metric space (endowed with a metric $d$), $G$ is a topological group and
$\alpha$ is a continuous action of $G$ on $X$ by homeomorphisms.
Here, continuity of $\alpha$ is understood as continuity of the map $G\times X \ni(g,x)\mapsto \alpha(g)(x)\in X$.
Most of the time, we will keep the action $\alpha$ implicit and simply refer to $(X,G)$ as a dynamical system.
In a similar fashion, we mostly write $gx$ instead of $\alpha(g)(x)$ ($g\in G,x\in X$).
For $\Z$-actions, which are uniquely determined by $f\=\alpha(1)$, we also refer by $(X,f)$
to the dynamical system $(X,\Z,\alpha)$.

A dynamical system $(Y,G)$ is a \emph{(topological) factor} of another dynamical
system $(X,G)$ if there exists a continuous surjection $h : X\longrightarrow Y$
with $h(gx)=g h(x)$ for all $g\in G$ and $x\in X$.
In this case, $h$ is called a \emph{factor map} and $(X,G)$ is referred to as
a \emph{(topological) extension} of $(Y,G)$.
If $h$ is further injective (and hence, a homeomorphism), we say $(X,G)$ and $(Y,G)$
are \emph{conjugate} and call $h$ a \emph{conjugacy}.

We say $(X,G)$ is \emph{transitive} if there is $x\in X$ whose
\emph{orbit} $Gx$ is dense.
In this case, also the point $x$ is called \emph{transitive}.
We say $(X,G)$ is \emph{minimal} if every $x\in X$ is transitive.
A set $A\subseteq X$ is \emph{invariant under $G$} (or
\emph{$G$-invariant}) if $gA=A$ for all $g\in G$.
We call a non-empty, closed and $G$-invariant set $A\subseteq X$ \emph{transitive}
if $(A,G)$ is transitive; we call such a set \emph{minimal} if $(A,G)$ is minimal.
It is a well-known consequence of Zorn's Lemma that every dynamical system $(X,G)$ has a minimal set $A\ssq X$.
Clearly, distinct minimal sets are disjoint.
Observe that invariance, transitivity and minimality are preserved under factor maps.

A system $(X,G)$ is \emph{equicontinuous} if
for every $\eps>0$ there exists $\delta_{\eps}>0$ such that
$d(x,y)<\delta_{\eps}$ implies $d(gx,gy)<\eps$ for all $g\in G$.
If $\delta_\eps$ can be chosen to equal $\eps$, then $(X,G)$ is called \emph{isometric}.
Observe that if $(X,G)$ is equicontinuous, then $\hat d(x,y)=\sup_{g\in G}d(gx,gy)$ defines a metric
that induces the same topology on $X$ as $d$.
Clearly, $\hat d(gx,gy)=\hat d(x,y)$ for all $g\in G$ and $x,y\in X$ which implies
that we can use the terms equicontinuous system and isometric system synonymously.

It is well known that  every topological dynamical system $(X,G)$ has a unique
(up to conjugacy) \emph{maximal equicontinuous factor (MEF)}, denoted by $(\mef,G)$.
That is, $(\mef,G)$ is an
equicontinuous factor of $(X,G)$ and moreover an extension of every other
equicontinuous factor of $(X,G)$.
For our considerations, the  following simple consequence of the defining
property of $(\mef,G)$ will be sufficient:  If $(\mef',G)$ is an
equicontinuous factor of $(X,G)$ with factor map $\pi'$ such that
$\pi'(x) = \pi' (y)$  implies  $\inf_{g\in G} d(gx,gy) = 0$, then
$(\mef',G)$ is conjugate to $(\mef,G)$.
For a detailed discussion of the above facts, we refer
to \cite{Auslander1988}.

Throughout this work, we consider $G$ to be a locally compact $\sigma$-compact
amenable group.
Recall that a locally compact $\sigma$-compact group $G$ is called amenable if
there exists a sequence $(F_n)_{n\in\N}$, called a \emph{(left) Følner
sequence}, of non-empty compact sets in $G$ such that
\begin{align*}
    \lim_{n\to\infty}\frac{m(gF_n\triangle F_n)}{m(F_n)}
        =0\quad\textnormal{for all }g\in G,
\end{align*}
where $\triangle$ denotes the symmetric difference and $m$ is a
\emph{(left) Haar measure} of $G$ (we may synonymously write
$\abs{F}$ for the Haar measure $m(F)$ of a measurable set $F\ssq G$).

Since $G$ acts by homeomorphisms, for each
$g\in G$ the map $g\:X\ni x\mapsto gx$ is Borel bi-measurable.
We call a Borel probability measure $\mu$ on $X$ \emph{invariant under
$G$} (or \emph{$G$-invariant}) if $\mu(A)=\mu(gA)$ for every Borel
measurable subset $A\subseteq X$ and $g\in G$.
We say a $G$-invariant measure $\mu$ is \emph{ergodic} if all Borel sets $A$ with
$\mu(A\,\triangle\, gA)=0$ ($g\in G$) verify $\mu(A)=0$ or $\mu(A)=1$.
It is well known that the amenability of $G$ ensures the existence of a $G$-invariant measure
for $(X,G)$.
Further, the set of invariant measures is convex and an invariant measure is ergodic if and only if
it is an extremal point of the set of invariant measures.
In particular, if $(X,G)$ has a unique invariant measure, this measure is necessarily ergodic and $(X,G)$ is referred to
as \emph{uniquely ergodic}.
Finally, we call a closed invariant set $A\ssq X$ \emph{uniquely ergodic} if
$(A,G)$ is uniquely ergodic.
For further information of measure-theoretic properties of dynamical systems,
see also \cite{EinsiedlerWard2011}.


\subsection{Main results}

Given a dynamical system $(X,G)$ and a Følner sequence $\Fol=(F_n)_{n\in\N}$, we
call $(X,G)$ \emph{Besicovitch-$\Fol$-mean equicontinuous} or just
\emph{$\Fol$-mean equicontinuous} if for all $\eps>0$ there exists
$\delta_\eps>0$ such that
\begin{align}\label{def:Besicovitch-mean equicontinuous}
    D_{\Fol}(x,y)
    \=\varlimsup_{n\to\infty}\frac{1}{\abs{F_n}}\int\limits_{F_n} d(tx,ty)dm(t)<\eps,
\end{align}
for all $x,y\in X$ with $d(x,y)<\delta_\eps$.
The dependence on the
Følner sequence immediately motivates the next definition which
will also be the integral notion in this article.
We say $(X,G)$ is \emph{Weyl-mean equicontinuous} or just
\emph{mean equicontinuous}  if for all $\eps>0$ there is
$\delta_\eps>0$ such that for all $x,y\in X$ with
$d(x,y)<\delta_\eps$ we have
\[
    D(x,y)\=\sup\{D_{\Fol}(x,y)\;|\;\Fol\textnormal{ is a Følner sequence}\}<\eps.
\]

Before we can proceed, a few comments are in order.
First, note that $D_\Fol$ and $D$ are pseudometrics.
Moreover, as is not hard to see, $D$ is \emph{$G$-invariant}, that is, $D(gx,gy) = D(x,y)$
for all $x,y\in X$ and $g\in G$ (for the convenience of the reader, we provide a
proof of this fact, see Proposition~\ref{prop:D-is-invariant}).
Indeed, if $G$ is abelian, it is immediately seen that \eqref{def:Besicovitch-mean equicontinuous}
already defines a $G$-invariant pseudometric (simply for algebraic reasons)
which simplifies many proofs for abelian $G$.
In the non-abelian situation, this does not hold anymore in general.
Yet, it turns out that under fairly general assumptions on $(X,G)$ it actually is true
if $(X,G)$ is mean equicontinuous (see Theorem~\ref{thm:F-mean-equicontinuity implies mean-equicontinuity}).
It is an interesting observation that in this case, however, the reason behind the invariance
of $D_\Fol$ is not so much algebraic but ergodic in nature (see Section~\ref{sec:relation Besicovitch and Weyl}).

In the following main structural result, we will see that mean
equicontinuity of a system $(X,G)$ is intimately linked to a
regularity property of the topological factor map $\pi:X\to\mef$
onto its maximal equicontinuous factor $(\mef,G)$.
For the definition of this regularity property, we need to introduce
the following notion.
Two probability spaces $(X,\mc B_X,\mu)$ and $(Y,\mc B_Y,\nu)$ are called
\emph{isomorphic} ($\!\bmod\, 0$) if there are measurable sets $M\subseteq X$ and
$N\subseteq Y$ with $\mu(M)=\nu(N)=1$ and a bi-measurable bijection $h'\: M \to N$
which is measure preserving, that is, $\mu({h'}^{-1}(A))=\nu(A)$ for all measurable
$A\ssq N$.
In this case, we call $h'$ an \emph{isomorphism} ($\bmod\, 0$) with respect to
$\mu$ and $\nu$.
We also refer to an everywhere defined measurable map $h\: X\to Y$ as an
\emph{isomorphism} ($\bmod\, 0$) with respect to $\mu$ and $\nu$ if $h(x)=h'(x)$
with $x\in M$ for some $h'$ and $M$ as above.

Suppose now that $(X,G)$ is a topological extension of $(Y,G)$ via a factor map
$h:X\to Y$ and let $\mu$ be a $G$-invariant measure on $X$.
We say $(X,G)$ is a \emph{topo-isomorphic} extension of $(Y,G)$ with respect to
$\mu$ if $h$ is also an isomorphism with respect to $\mu$ and $h(\mu)$ where
$h(\mu)$ denotes the push-forward of $\mu$.
In this case, we call $h$ a \emph{topo-isomorphy} with respect to $\mu$.
In case that no measure is specified,  $(X,G)$ is called a \emph{topo-isomorphic}
extension of $(Y,G)$  and $h$ a \emph{topo-isomorphy} if $h\:X\to Y$ is a
topo-isomorphy with  respect to every $G$-invariant measure $\mu$ on $X$.
Observe that the push-forward of an invariant measure $\mu$ under a topo-isomorphy
is ergodic if and only if $\mu$ is ergodic.

\begin{theorem}[Mean equicontinuity and  topo-isomorphy] \label{thm:structural result mean equicontinuous systems}
The topological dynamical system $(X,G)$ is (Weyl-) mean
equicontinuous if and only  if it is a topo-isomorphic extension of
its maximal equicontinuous factor $(\mef,G)$.
\end{theorem}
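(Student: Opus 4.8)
The plan is to prove both implications by relating the pseudometric $D$ to the factor map $\pi\colon X\to\mef$.

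\medskip

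\textbf{From topo-isomorphy to mean equicontinuity.}
First I would assume $\pi$ is a topo-isomorphy and establish that $\pi(x)=\pi(y)$ implies $D(x,y)=0$. Given a Følner sequence $\Fol$, the quantity $D_\Fol(x,y)$ is a $\limsup$ of Birkhoff-type averages of the continuous function $d$ along $\Fol$. The key point is that for a topo-isomorphic extension, points with the same image behave identically in an averaged sense: if $\pi(x)=\pi(y)$, then for \emph{every} $G$-invariant measure $\mu$, the two points $x,y$ are identified $\mu$-a.e.\ via the isomorphism, and one pushes this through a pointwise ergodic theorem (the Lindenstrauss pointwise ergodic theorem for amenable groups along tempered Følner sequences, or a generic-point argument after decomposing into ergodic components) to conclude $D_\Fol(x,y)=0$ for all $\Fol$, hence $D(x,y)=0$. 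The slightly delicate part here is that $D$ requires control over \emph{all} Følner sequences simultaneously, not just tempered ones; I would handle this by noting that $D_\Fol(x,y)\le D_{\Fol'}(x,y)$ for suitable refinements, or more robustly by showing directly that $d(tx,ty)$ has average zero along arbitrary Følner sequences because $d(\cdot x,\cdot y)$ vanishes on a set of full measure for every invariant measure — a statement stable under passing to any Følner averaging. Once $D(x,y)=0$ whenever $\pi(x)=\pi(y)$, I would invoke the characterization of the MEF recalled in the excerpt: the pseudometric $D$ descends to a genuine metric on $X/{\sim}$ where $x\sim y \iff D(x,y)=0$, and since $D$ is $G$-invariant (Proposition~\ref{prop:D-is-invariant}), the quotient is an equicontinuous (in fact isometric) factor of $(X,G)$ which $\pi$ factors through; combined with $\pi(x)=\pi(y)\Rightarrow D(x,y)=0$ this forces the quotient to be conjugate to $\mef$, and that conjugacy is exactly the statement that $D$ induces the topology of $\mef$ pulled back to $X$, i.e.\ $(X,G)$ is mean equicontinuous.

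\medskip

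\textbf{From mean equicontinuity to topo-isomorphy.}
Conversely, assume $(X,G)$ is mean equicontinuous. Then $D$ is a $G$-invariant continuous pseudometric (continuity from the $\delta_\eps$-condition), so $X_D \= X/{\{D=0\}}$ with the induced metric is an equicontinuous factor of $(X,G)$, with factor map $q\colon X\to X_D$. Since $d(tx,ty)\ge$ (distance after applying $t$) and $D(x,y)=0$ implies $\inf_{g}d(gx,gy)=0$, the MEF characterization gives that $X_D$ is conjugate to $\mef$; identify them, so that $q=\pi$ up to conjugacy. It remains to show $\pi$ is an isomorphism $\bmod\,0$ with respect to every $G$-invariant $\mu$. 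Surjectivity and measurability are automatic; I must produce a full-measure set on which $\pi$ is injective and push-forward-preserving. Injectivity $\mu$-a.e.\ is the crux: I would argue that the set $\{(x,y): \pi(x)=\pi(y),\ x\ne y\}$ projects to a $\mu$-null set. Here the natural route is via the product system $(X\times X,G)$ and an invariant measure on it supported on the fibre product $X\times_\mef X$; mean equicontinuity says $D=0$ on this fibre product, and combining with the ergodic theorem applied to $d$ viewed on $X\times X$ shows $\int d(tx,ty)\,dm \to 0$, forcing $d(x,y)=0$ a.e.\ on the relevant ergodic components of the fibre product — this is precisely the kind of product-system criterion flagged in Theorem~\ref{thm:intro equivalence criterion of mean equi. involving pointwise u.e}, which I would use as a black box if available at this point, otherwise reprove the needed special case. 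Measure preservation then follows since $\pi(\mu)$ is by definition the push-forward. I expect the \textbf{main obstacle} to be the non-abelian averaging: ensuring that ``$D_\Fol=0$ for one good Følner sequence'' upgrades to ``$D=0$'' (all Følner sequences) and that the ergodic-theoretic identification of a.e.\ fibres is uniform over all invariant measures $\mu$ — this is exactly where one needs the disintegration of $\mu$ over $\pi(\mu)$ together with the fact, noted after the definition of topo-isomorphy, that ergodicity is preserved, reducing everything to the ergodic case where a single generic point argument closes the gap.
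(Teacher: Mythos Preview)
Your ``mean equicontinuity $\Rightarrow$ topo-isomorphy'' direction is essentially the paper's argument: form the relative product measure $\mu\times_\nu\mu$ on $X\times_\mef X$, and if it were not supported on the diagonal, pick an ergodic component $\tilde\mu$ giving mass to a set bounded away from the diagonal, find a $\tilde\mu$-generic point $(x,y)$ for some Følner subsequence, and read off $D(x,y)>0$ while $\pi(x)=\pi(y)$ --- contradiction. (Do not invoke Theorem~\ref{thm:intro equivalence criterion of mean equi. involving pointwise u.e} here; in the paper it is proved \emph{using} the present theorem, so that would be circular. The direct disintegration argument you sketch is exactly what is needed and is self-contained.)

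The ``topo-isomorphy $\Rightarrow$ mean equicontinuity'' direction, however, has a genuine gap. You aim to prove $\pi(x)=\pi(y)\Rightarrow D(x,y)=0$ and then assert this yields mean equicontinuity. Two problems. First, the deduction of $D(x,y)=0$ via a pointwise ergodic theorem does not work as stated: such theorems give information at \emph{generic} points for a given measure, not at an arbitrary prescribed pair $(x,y)$ in a fiber. Your sentence ``$x,y$ are identified $\mu$-a.e.\ via the isomorphism'' conflates the a.e.\ inverse of $\pi$ with a statement about the two specific points in question. Second, and more seriously, even if one had $\pi(x)=\pi(y)\Leftrightarrow D(x,y)=0$, this would \emph{not} give mean equicontinuity: mean equicontinuity is the statement that $D$ is \emph{continuous} with respect to $d$, and knowing only the zero-set of $D$ says nothing about continuity. (Equivalently: that $D$ descends to a metric on $\mef$ does not tell you this metric induces the given topology on $\mef$.) The paper's route is quite different. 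It first shows, via a decomposition result, that a topo-isomorphic extension of an equicontinuous system is pointwise uniquely ergodic with images of distinct ergodic supports disjoint in $\mef$. Then, for each $f\in\mc C(X)$ and each pair $x_1,x_2$, one approximates $f$ in $L^2(\mu_i)$ by a function $g_i\circ\pi$ pulled back from $\mef$; unique ergodicity on orbit closures converts this $L^2$-closeness into smallness of the Birkhoff-average error uniformly over all Følner sequences, while equicontinuity of $(\mef,G)$ controls the middle term $D_{g\circ\pi}(x_1,x_2)$ once $\pi(x_1),\pi(x_2)$ are close. This gives continuity of every $D_f$, which is equivalent to mean equicontinuity. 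The essential idea you are missing is this passage through pointwise unique ergodicity and $L^2$-approximation; without it there is no mechanism to turn measure-theoretic information (topo-isomorphy) into the required uniform, all-F{\o}lner, all-points control of $D$.
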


Let us point out that the proof of this theorem also shows that the maximal
equicontinuous factor of a mean equicontinuous system is in a natural sense the
quotient of $X$ by the pseudometric $D$ (see the corresponding discussion in
Section \ref{subsec:mean-equi}).

The concept of topo-isomorphy is at the interface of topological and
measure-theoretical aspects of dynamical systems.
This kind of ``hybrid" notion was also recently studied by Downarowicz and Glasner
in \cite{DownarowiczGlasner2016} where a similar statement to the above is proven
for minimal dynamical systems with $G=\Z$.
We would like to mention that the direction from topo-isomorphy to mean equicontinuity
(Theorem~\ref{thm:topo-isomorphy implies mean equicontinuity})
is proven in a completely different way than in \cite{DownarowiczGlasner2016},
while the proof that mean equicontinuity implies topo-isomorphy is close to the
ones of \cite[Theorem 3.8]{LiTuYe2015} and \cite[Proposition 2.5]{DownarowiczGlasner2016},
see Section \ref{sec:topo-isomorphic extensions}.

It is very worth noting that Theorem~\ref{thm:structural result mean equicontinuous systems}
is by far not only of abstract importance but actually offers a direct way to
establish the mean equicontinuity of many well-known minimal group actions.
To emphasize this, we briefly present a (non-exhaustive) list of
minimal group actions where mean equicontinuity can always be derived by using the
structural characterization provided in Theorem~\ref{thm:structural result mean equicontinuous systems}.
Starting with $\Z$-actions, two very common example classes which are well-known to be mean equicontinuous
are Sturmian subshifts
and regular Toeplitz subshifts, see for instance \cite{Fogg2002,Kurka2003,Downarowicz2005}
for further information and references.
Non-symbolic examples can be found in the class of so-called Auslander systems (see \cite{Auslander1988}
and \cite{HaddadJohnson1997}).

Before we go beyond $\Z$-actions, we want to stress that minimal mean
equicontinuous systems are always uniquely ergodic, see
Corollary \ref{cor:structure minimal mean equicontinuous systems} (iii).
To present the reader a non-minimal and moreover, intrinsically non-uniquely ergodic
system, we provide
a symbolic $\Z$-action which has infinitely many ergodic measures in Example~\ref{ex:non-uniquely ergodic subshift}.

Concerning actions by more general groups, Theorem~\ref{thm:structural result mean equicontinuous systems}
also constitutes a basis for providing a novel and straightforward construction method
for a class of non-abelian minimal mean equicontinuous systems (outlined in
Subsection~\ref{sec:isometric subgroups}).
Moreover, we can continue the list of examples from above: higher-dimensional
subshifts, i.e., $\Z^n$-actions, which are mean equicontinuous
can for instance be obtained from regular Toeplitz arrays, see \cite{Cortez2006}.
Furthermore, the theory of quasicrystals contains many natural examples of
mean equicontinuous $\R^n$-actions, like the $\R^2$-actions obtained from Penrose
tilings \cite{Robinson1996} or the chair tiling \cite{Robinson1999}.
For more information concerning tilings and Delone sets in $\R^n$, see \cite{BaakeGrimm2013}.
It is also possible to consider Delone sets (and canonical actions induced by them) in more general groups than $\R^n$.
Especially, so-called regular model sets immediately yield mean equicontinuous group actions, see \cite{Schlottmann1999}.
%

Finally, we would like to mention that according to \cite[Corollary~5.4 (2)]{Glasner2018}, minimal tame systems are always topo-isomorphic extensions of their maximal equicontinuous factor if the corresponding acting group is amenable (see also the short discussion at the end of Section~\ref{MAP-groups}).
Systems belonging to this family are Sturmian-like $\Z^n$-actions \cite{GlasnerMegrelishvili2018}
or tame generalized Toeplitz shifts \cite{FuhrmannKwietniak2019} (see also \cite{LcackaStraszak2018} for not necessarily tame but still mean equicontinuous examples).
In fact, in \cite{FuhrmannKwietniak2019} it is shown that
every countable maximally almost periodic amenable group allows for effective mean equicontinuous minimal actions which are not equicontinuous (see also Section~\ref{MAP-groups}).

Our next main result gives a  characterization of mean
equicontinuity of a system in terms of its product system (see
Section \ref{sec:product} for details). To the authors' knowledge,
this result does not have a predecessor in any special situation.
However, it is, of course, well in line with a plethora of results
on characterizing properties of a dynamical system via properties of
its product. We need the following notion: a system $(X,G)$ is
\emph{pointwise uniquely ergodic} if the \emph{orbit closure} $\overline
{Gx}$ of every point $x\in X$ is uniquely ergodic. For such systems
we denote by $\mu_x$ the unique ergodic measure supported on the
orbit closure of $x\in X$.

\begin{theorem}[Mean equicontinuity and the product system]\label{thm:intro equivalence criterion of mean equi. involving pointwise u.e}
    The system $(X,G)$ is mean equicontinuous if and only if
    \begin{itemize}
        \item the product system $(X\times X,G)$ is pointwise uniquely ergodic
        \item and the map $(x,y)\mapsto\mu_{(x,y)}$ is continuous
            (with respect to the weak-*topology).
    \end{itemize}
\end{theorem}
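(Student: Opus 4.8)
The plan is to exploit the structural result Theorem~\ref{thm:structural result mean equicontinuous systems} together with its refinement that the MEF is the quotient of $X$ by the pseudometric $D$. For the forward direction, assume $(X,G)$ is mean equicontinuous, so that $\pi\colon X\to\mef$ is a topo-isomorphy. Since $(\mef,G)$ is equicontinuous (hence isometric after replacing the metric), its product $(\mef\times\mef,G)$ is again equicontinuous; in particular $\mef\times\mef$ decomposes into minimal sets, each of which is uniquely ergodic (orbit closures in an equicontinuous system are minimal and uniquely ergodic, carrying the Haar measure of the corresponding group translation). Thus $(\mef\times\mef,G)$ is pointwise uniquely ergodic, and the assignment $(s,t)\mapsto\mu_{(s,t)}$ is weak-* continuous — this is a standard fact for equicontinuous systems (the ergodic measure on $\overline{G(s,t)}$ depends continuously on $(s,t)$, e.g.\ because it is the image of Haar measure under the orbit parametrization, which varies continuously). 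Now $(X\times X,G)$ is a topological extension of $(\mef\times\mef,G)$ via $\pi\times\pi$, and I claim it is in fact a topo-isomorphic extension: a $G$-invariant measure $\rho$ on $X\times X$ projects to a $G$-invariant $\rho_i$ on $X$ ($i=1,2$), each of which, by Theorem~\ref{thm:structural result mean equicontinuous systems}, is carried isomorphically by $\pi$ onto $\mef$; one checks (using that $D(x,y)=0$ iff $\pi(x)=\pi(y)$, together with the disintegration of $\rho$ over $(\pi\times\pi)(\rho)$) that $\pi\times\pi$ is $\rho$-a.e.\ injective. Hence orbit closures of points $(x,y)$, which map onto orbit closures of $(\pi x,\pi y)$, are uniquely ergodic, and $\mu_{(x,y)}$ is the (unique) lift of $\mu_{(\pi x,\pi y)}$; continuity of $(x,y)\mapsto\mu_{(x,y)}$ then follows from continuity of $(s,t)\mapsto\mu_{(s,t)}$ on $\mef\times\mef$ together with continuity of the lifting, which in turn comes from the fact that the topo-isomorphy identifies fibers in a way compatible with the weak-* topology.

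For the converse, assume $(X\times X,G)$ is pointwise uniquely ergodic with $(x,y)\mapsto\mu_{(x,y)}$ weak-* continuous. I want to show $D$ is continuous in the sense required for mean equicontinuity, i.e.\ $D(x,y)\to 0$ as $d(x,y)\to 0$. The key observation is that
\[
D_\Fol(x,y)=\varlimsup_n\frac{1}{|F_n|}\int_{F_n} d(tx,ty)\,dm(t)
\]
is controlled by the measure $\mu_{(x,y)}$ on $X\times X$: for any point $(x,y)$ whose orbit closure is uniquely ergodic with measure $\mu_{(x,y)}$, a pointwise-ergodic-type argument (Lindenstrauss's pointwise ergodic theorem for amenable groups along tempered Følner sequences, or the uniform convergence afforded by unique ergodicity applied to the continuous function $d(\cdot,\cdot)$ on $X\times X$) gives that the Følner averages of $t\mapsto d(tx,ty)$ converge, for every Følner sequence, to $\int_{X\times X} d\,d\mu_{(x,y)}$. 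In particular $D(x,y)=\int d\,d\mu_{(x,y)}$, independent of the Følner sequence. Now if $(x_k,y_k)\to(x,y)$ on the diagonal (so $d(x_k,y_k)\to 0$, meaning we may take $(x,y)$ on the diagonal $\Delta$), continuity of $(x,y)\mapsto\mu_{(x,y)}$ yields $\mu_{(x_k,y_k)}\to\mu_{(x,x)}$ weak-*; but $\mu_{(x,x)}$ is supported on the diagonal, where $d\equiv 0$, so $\int d\,d\mu_{(x,x)}=0$, and weak-* convergence (with $d$ continuous and bounded) gives $D(x_k,y_k)=\int d\,d\mu_{(x_k,y_k)}\to 0$. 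A routine compactness argument upgrades this pointwise statement to the uniform $\eps$–$\delta$ formulation of mean equicontinuity.

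The main obstacle I anticipate is twofold. First, establishing that unique ergodicity of the orbit closure of $(x,y)$ actually forces the Weyl averages (supremum over \emph{all} Følner sequences) — not merely the Besicovitch averages along one sequence — to converge to the integral against $\mu_{(x,y)}$; this is where unique ergodicity is genuinely needed, since for a uniquely ergodic system the Birkhoff averages of a continuous function converge uniformly along \emph{every} Følner sequence (this is the amenable analogue of the classical fact and should be available or easily derived), so that $D_\Fol(x,y)$ is the same for all $\Fol$ and equals $\int d\,d\mu_{(x,y)}$. Second, in the forward direction, the careful verification that $\pi\times\pi$ is a topo-isomorphy of $(X\times X,G)$ onto $(\mef\times\mef,G)$ — i.e.\ that joint invariant measures on $X\times X$ are carried injectively — and that the lift of ergodic measures is continuous; this requires combining the disintegration of an invariant measure on the product over its MEF-image with the a.e.\ injectivity of $\pi$ on each marginal, and is the technical heart of that implication. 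Everything else (equicontinuous products are pointwise uniquely ergodic with continuous ergodic-measure assignment; the final compactness argument) is routine.
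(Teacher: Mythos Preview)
Your backward direction is essentially the paper's: unique ergodicity of orbit closures means (via the amenable uniform ergodic theorem, the paper's Theorem~\ref{thm:characterisation of unique ergodicity}) that $D_\Fol(x,y)=\int d\,d\mu_{(x,y)}$ for \emph{every} Følner sequence, hence $D(x,y)=\int d\,d\mu_{(x,y)}$, and weak-* continuity finishes. The paper phrases it as continuity of $D$ everywhere rather than only at the diagonal, but this is cosmetic.

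Your forward direction takes a genuinely different and more laborious route. The paper does not go through $\mef\times\mef$ at all. Instead it makes the one-line observation that \emph{if $(X,G)$ is mean equicontinuous then so is $(X\times X,G)$} (immediate from the definition, using the sum or max metric on the product). Once this is noted, pointwise unique ergodicity of $(X\times X,G)$ falls out of the decomposition theorem (Theorem~\ref{thm: decomposition theorem}(a)) applied directly to the product, and continuity of $(x,y)\mapsto\mu_{(x,y)}$ follows from the elementary Proposition~\ref{prop:continuity-of-overline-A-and-underline-A}: $\Fol$-mean equicontinuity makes $\overline A(\Fol,\varphi)$ and $\underline A(\Fol,\varphi)$ continuous for every $\varphi\in\mc C(X\times X)$, and by unique ergodicity these coincide with $\mu_{(x,y)}(\varphi)$. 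No lifting of measures from $\mef\times\mef$ is needed.

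Your route---showing $\pi\times\pi$ is a topo-isomorphy (the marginals argument you sketch does work: if $M_i$ is a full-$\rho_i$-measure set on which $\pi$ is injective, then $M_1\times M_2$ has full $\rho$-measure) and then lifting pointwise unique ergodicity and continuity---is viable, but the continuity lift is the delicate point, exactly as you anticipate. To push it through you would argue that any weak-* accumulation point $\nu$ of $\mu_{(x_n,y_n)}$ is invariant with $(\pi\times\pi)_*\nu=\mu_{(\pi x,\pi y)}$, and then invoke Corollary~\ref{coro:decomposition} for the product to conclude that the ergodic decomposition of $\nu$ consists of a single measure, namely $\mu_{(x,y)}$. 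This works, but it is a detour; the paper avoids it entirely by recognising that mean equicontinuity passes to products.
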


As the metric $d$ is continuous on $X\times X$, the previous theorem
together with the standard result on the existence of averages of
continuous functions for uniquely ergodic dynamical systems implies
for mean equicontinuous systems that the $\limsup$ in
\eqref{def:Besicovitch-mean equicontinuous} is actually a limit and
does not depend on the chosen Følner sequence, whence, in
particular, it follows $D=D_\Fol$ for any left Følner sequence
    $\Fol$ (see also Section~\ref{sec:product} for a related discussion).

Moreover, the previous result allows us to derive the following
theorem on the independence of Følner sequences.

\begin{theorem}[Mean equicontinuity and $\mathcal{F}$-mean equicontinuity]
\label{thm:F-mean-equicontinuity implies mean-equicontinuity}
    Let $(X,G)$ be a dynamical system and assume that
    \begin{itemize}
        \item there is an invariant measure $\mu$ with full support, i.e.\ $\supp(\mu)=X$,
    \end{itemize}
    or that
    \begin{itemize}[resume]
        \item the group $G$ is abelian.
    \end{itemize}
    Then  $(X,G)$ is mean equicontinuous if and only if $(X,G)$ is
    $\Fol$-mean equicontinuous for some left Følner sequence
    $\Fol$.
\end{theorem}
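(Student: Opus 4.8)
The plan is to prove the non-trivial direction: assuming $(X,G)$ is $\Fol$-mean equicontinuous for \emph{some} left Følner sequence $\Fol=(F_n)$, deduce that it is (Weyl-)mean equicontinuous. The converse is immediate since $D_\Fol \leq D$ by definition, so $\Fol$-mean equicontinuity follows from mean equicontinuity for every $\Fol$. The strategy is to leverage Theorem~\ref{thm:intro equivalence criterion of mean equi. involving pointwise u.e}: it suffices to show that under either of the two stated hypotheses, $\Fol$-mean equicontinuity forces the product system $(X\times X,G)$ to be pointwise uniquely ergodic with $(x,y)\mapsto\mu_{(x,y)}$ weak-$*$ continuous.

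First I would treat the abelian case, which is the cleaner one. When $G$ is abelian, the pseudometric $D_\Fol$ is automatically $G$-invariant for algebraic reasons (as remarked after the definition of mean equicontinuity in the excerpt). The key point is then that for an invariant pseudometric, $\Fol$-mean equicontinuity at all points propagates along orbits and, via a standard argument, upgrades to a uniform statement controlling $D_\Fol$ on a neighborhood of the diagonal; combined with the continuity of $d$ this lets one show that $\frac{1}{|F_n|}\int_{F_n} d(tx,ty)\,dm(t)$ converges, uniformly enough, so that the orbit closures of points $(x,y)$ carry a unique invariant measure and the assignment is continuous. Concretely, I would show that $\Fol$-mean equicontinuity implies the factor map $\pi\colon X\to X/D_\Fol$ onto the quotient metric space is, on each orbit closure, a topo-isomorphy of the same flavor as in Theorem~\ref{thm:structural result mean equicontinuous systems}, and then invoke that theorem (or its pointwise/product consequences) to conclude mean equicontinuity. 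An alternative, possibly shorter, route in the abelian case: show directly that $D_\Fol = D$ — i.e., the $\limsup$ defining $D_\Fol$ is a Følner-independent limit — by exploiting that $d(tx,ty)$ is a continuous function on the product, $D_\Fol$-equicontinuity gives equicontinuity of the family of partial averages, and an Arzelà–Ascoli plus invariance argument pins down the limit independently of $\Fol$; this is essentially the content of the remark following Theorem~\ref{thm:intro equivalence criterion of mean equi. involving pointwise u.e}, run in reverse.

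For the full-support case, I would argue as follows. Since $\mu$ has full support and is $G$-invariant, $\mu\times\mu$ is an invariant measure on $X\times X$ with full support. The bridge here is the ``ergodic'' mechanism alluded to in the excerpt (Section~\ref{sec:relation Besicovitch and Weyl}): even without algebraic invariance of $D_\Fol$, one can use the pointwise ergodic theorem along the Følner sequence $\Fol$ for the system $(X\times X, G, \mu\times\mu)$ to show that for $(\mu\times\mu)$-a.e.\ $(x,y)$ the average $\frac{1}{|F_n|}\int_{F_n} d(tx,ty)\,dm(t)$ converges to $\int d(t x', t y')\,$-type conditional expectations; $\Fol$-mean equicontinuity then forces these limits to be small near the diagonal, and full support lets one pass from ``a.e.'' to ``everywhere'' by continuity of $d$ and a density argument. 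This should yield that $D_\Fol$ is in fact $G$-invariant and equals $D$, putting us back in the situation covered by Theorem~\ref{thm:structural result mean equicontinuous systems}.

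The main obstacle is precisely the non-abelian, full-support case: making the ergodic-theoretic argument rigorous requires a mean or pointwise ergodic theorem for the amenable group $G$ along the \emph{given} Følner sequence $\Fol$ (not an arbitrary tempered one), applied to the not-necessarily-continuous-invariant pseudometric situation, and then transferring the resulting a.e.\ statement to a genuinely uniform, everywhere-defined statement using the full support of $\mu$. The delicate step is controlling how the lack of algebraic $G$-invariance of $D_\Fol$ interacts with the Følner averages — one must show that the obstruction to invariance is ``ergodically negligible,'' which is where full support (ensuring no invariant set is wasted) does the real work. Once $D_\Fol$ is shown to be $G$-invariant and to coincide with $D$, the remaining implications are immediate from the earlier theorems.
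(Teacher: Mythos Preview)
Your high-level strategy---reduce to the product-system characterization (Theorem~\ref{thm:intro equivalence criterion of mean equi. involving pointwise u.e}) and show $(X\times X,G)$ is pointwise uniquely ergodic with continuous ergodic-measure map---is exactly the paper's route. The gap is in the mechanism you propose for establishing pointwise unique ergodicity.

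For the full-support case you invoke a pointwise ergodic theorem along the \emph{given} F{\o}lner sequence $\Fol$, and you correctly flag that this is unavailable in general (Lindenstrauss needs temperedness). The paper sidesteps this entirely. The key elementary observation (Proposition~\ref{prop:continuity-of-overline-A-and-underline-A}) is that $\Fol$-mean equicontinuity alone forces both $\overline{A}(\Fol,\varphi)$ and $\underline{A}(\Fol,\varphi)$ to be \emph{continuous} on $X$ (hence on $X\times X$) for every continuous $\varphi$---no ergodic theorem required. With this in hand, one first shows the support of every ergodic measure is uniquely ergodic: take a subsequence $\Fol'$ along which the Mean Ergodic Theorem gives a dense set of generic points, and propagate the equality $\overline{A}(\Fol',\varphi)=\underline{A}(\Fol',\varphi)=\mu(\varphi)$ to the whole support by continuity. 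Then, using that points in the support of \emph{any} invariant measure are limits of points in supports of ergodic measures, one gets unique ergodicity of every orbit closure in $\supp\mu$. Applied to the product with $\mu\times\mu$ of full support, this gives pointwise unique ergodicity of $(X\times X,G)$; continuity of $(x,y)\mapsto\mu_{(x,y)}$ then drops out immediately from $\mu_{(x,y)}(\varphi)=\overline{A}(\Fol,\varphi)(x,y)$ and continuity of $\overline{A}$. So the ``ergodic mechanism'' you allude to is not an a.e.\ pointwise statement upgraded via full support, but rather a continuity-of-averages statement combined with the Mean Ergodic Theorem on subsequences.

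For the abelian case your suggestions (quotient by $D_\Fol$, or Arzel\`a--Ascoli to pin down $D_\Fol=D$) are vague and the second is close to circular. The paper's argument is different and cleaner: when $G$ is abelian, $\Fol$ is automatically a \emph{right} F{\o}lner sequence, so $\overline{A}(\Fol,\varphi)$ and $\underline{A}(\Fol,\varphi)$ are not only continuous but also $G$-\emph{invariant}. On any transitive set (in particular any orbit closure) they are therefore constant, and a diagonal subsequence argument makes them equal for all $\varphi$; this yields unique ergodicity of every orbit closure outright, with no support hypothesis. The rest (product, continuity of the measure map) goes as above.
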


Observe that if $(X,G)$ is minimal, the extra assumption of a measure with full support is evidently fulfilled.
It is noteworthy that the extra effort needed to overcome the lack of commutativity in this work is most visible
in the proof of the above statement.
We would also like to remark that in the recent article \cite{QiuZhao}, a similar statement has
independently (and by different means) been proven to hold if $G=\Z$.
Under the assumption of a minimal $\Z$-action, it is known due to \cite{DownarowiczGlasner2016}.

In the minimal case we can provide another characterization of mean
equicontinuity.
This is a characterization in terms of spectral theory, or more specifically,
in terms of a decomposition of the space $L^2 (X,\mu)$.
The corresponding proof can be found in
Section \ref{sec:mean equicontinuity and pure point spectrum}.

\begin{theorem}[Mean equicontinuity and  spectral theory]\label{thm:spectral-characterization}
    Assume $(X,G)$ is minimal.
    Then $(X,G)$ is mean equicontinuous if and only if $(X,G)$ has a unique invariant
    measure $\mu$ and $L^2 (X,\mu)$ can be written as an orthogonal sum of finite
    dimensional,  $G$-invariant subspaces consisting of continuous functions.
\end{theorem}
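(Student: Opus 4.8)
The plan is to deduce this spectral characterization from the structural result, Theorem \ref{thm:structural result mean equicontinuous systems}, together with the classical theory of equicontinuous systems. First I would treat the forward direction. Assume $(X,G)$ is minimal and mean equicontinuous. By Corollary \ref{cor:structure minimal mean equicontinuous systems} (iii), minimality forces unique ergodicity, so there is a unique invariant measure $\mu$; moreover $(X,G)$ is a topo-isomorphic extension of its maximal equicontinuous factor $(\mef,G)$ via the factor map $\pi$, which is in particular an isomorphism mod $0$ from $(X,\mu)$ to $(\mef,\pi(\mu))$. Hence $L^2(X,\mu)$ is unitarily equivalent, via $f\mapsto f\circ\pi$, to $L^2(\mef,\pi(\mu))$ as $G$-representations. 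Since $(\mef,G)$ is a minimal (hence uniquely ergodic) equicontinuous --- i.e.\ isometric --- system, it is conjugate to a homogeneous space and the Peter--Weyl machinery applies: $L^2(\mef,\pi(\mu))$ decomposes as an orthogonal sum of finite-dimensional $G$-invariant subspaces, each spanned by (matrix-coefficient) continuous functions on $\mef$. Pulling back along the continuous map $\pi$, each such subspace consists of continuous functions on $X$, giving the desired decomposition of $L^2(X,\mu)$.

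For the converse, suppose $(X,G)$ is minimal, uniquely ergodic with invariant measure $\mu$, and $L^2(X,\mu)=\bigoplus_i H_i$ with each $H_i$ finite-dimensional, $G$-invariant, and consisting of continuous functions. The strategy is to build, out of this data, an equicontinuous factor onto which $\pi$ is a topo-isomorphy, and then invoke the easy direction of Theorem \ref{thm:structural result mean equicontinuous systems} (i.e.\ Theorem \ref{thm:topo-isomorphy implies mean equicontinuity}). Concretely, let $\Acal\ssq C(X)$ be the linear span of $\bigcup_i H_i$; it is $G$-invariant, dense in $L^2(X,\mu)$, and one checks it is closed under products (a product of two matrix coefficients is again a matrix coefficient, living in a finite-dimensional $G$-invariant space of continuous functions) and under complex conjugation, so its uniform closure $\bar\Acal$ is a $G$-invariant, conjugation-closed subalgebra of $C(X)$ containing the constants. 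Let $(\mef',G)$ be the corresponding factor of $(X,G)$, i.e.\ $X/{\sim}$ where $x\sim y$ iff $f(x)=f(y)$ for all $f\in\bar\Acal$, with factor map $\pi'$. On $\mef'$, the action is equicontinuous: the finitely many coordinates coming from any fixed $H_i$ transform under a compact group of unitary matrices, so finite-dimensional families of generators are uniformly equicontinuous, and a standard diagonal/approximation argument upgrades this to equicontinuity of $(\mef',G)$ (this is essentially the classical fact that a system with a dense set of almost periodic functions is equicontinuous). Since $\bar\Acal$ is dense in $L^2(X,\mu)$, the map $\pi'$ separates $\mu$-almost every pair of points, so $\pi'$ is an isomorphism mod $0$ with respect to $\mu$ and $\pi'(\mu)$; by minimality and unique ergodicity of $X$ the same holds for every invariant measure (there is only one), so $\pi'$ is a topo-isomorphy. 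By Theorem \ref{thm:topo-isomorphy implies mean equicontinuity}, $(X,G)$ is mean equicontinuous. (Note $(\mef',G)$ need not a priori be the MEF, but by the remark after Theorem \ref{thm:structural result mean equicontinuous systems} it will in fact coincide with it; this is not needed for the argument.)

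The main obstacle I anticipate is the converse direction, specifically verifying carefully that the algebra $\bar\Acal$ generated by the continuous eigenspaces yields an \emph{equicontinuous} factor. The finite-dimensionality and $G$-invariance give, for each $i$, a homomorphism from $G$ into the unitary group $U(H_i)$; its image is a bounded, hence relatively compact, subgroup, and this compactness is what produces uniform control over the orbit of finitely many generators under $G$. The delicate point is passing from equicontinuity "on finite pieces" to genuine equicontinuity of $(\mef',G)$ with respect to a metric realizing the quotient topology --- one must choose a metric on $\mef'$ built from a countable uniformly dense family in $\bar\Acal$ (using $\sigma$-compactness of $G$ and separability of $X$ to arrange countability) and then show the uniform bounds assemble correctly. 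Once equicontinuity is in hand, the topo-isomorphy is comparatively routine given unique ergodicity and the $L^2$-density of $\bar\Acal$.
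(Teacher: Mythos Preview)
Your forward direction matches the paper's: transfer to $L^2(\mef,\pi(\mu))$ via the topo-isomorphy and invoke Peter--Weyl on the homogeneous space $\mef$.

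For the converse, both you and the paper aim to produce an equicontinuous factor onto which the map is a topo-isomorphy and then invoke Theorem~\ref{thm:topo-isomorphy implies mean equicontinuity}, but the constructions differ substantially. The paper bypasses algebras entirely: for each $V_\alpha$ it sets
\[
    d_\alpha(x,y)=\sup\{|f(x)-f(y)|:f\in V_\alpha,\ \|f\|_\infty=1\},
\]
which is a continuous $G$-invariant pseudometric (invariance is immediate from $G$-invariance of $V_\alpha$), takes a summable combination $D'=\sum_\alpha c_\alpha d_\alpha$, and quotients by $D'=0$. The result is an \emph{isometric} factor in one stroke---no Gelfand theory, no products, no compactness of unitary groups---and unitarity of the induced $L^2$-map is clear since the $V_\alpha$ already span $L^2(X,\mu)$ and each $f\in V_\alpha$ factors through the quotient. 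This sidesteps precisely the equicontinuity verification you flagged as the main obstacle.

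Your algebra route can be made to work but is longer, and one step needs repair: your parenthetical does not prove that $\Acal=\mathrm{span}\bigcup_i H_i$ is closed under products. A product $fg$ with $f\in H_i$, $g\in H_j$ lies in \emph{some} finite-dimensional $G$-invariant subspace of $C(X)$ (namely the image of $H_i\otimes H_j$), but there is no reason this sits inside $\mathrm{span}\bigcup_k H_k$. The fix is to enlarge $\Acal$ to the algebra it generates, which still consists of functions lying in finite-dimensional $G$-invariant continuous subspaces, and then proceed; with that adjustment your argument is sound.
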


Now, for minimal systems we may combine all the previous theorems to
obtain a slightly simplified list of equivalent characterizations of
mean equicontinuity (note that the statements (i)-(iii) are a
generalization of Theorem 2.1 in \cite{DownarowiczGlasner2016} which
is treating $\Z$-actions).

\begin{corollary}\label{cor:structure minimal mean equicontinuous systems}
    Let $(X,G)$ be a minimal system.
    Then the following are equivalent:
    \begin{enumerate}
        \item[(i)] $(X,G)$ is mean equicontinuous.
        \item[(ii)] $(X,G)$ is $\Fol$-mean equicontinuous for some (left)
            Følner sequence $\Fol$.
        \item[(iii)] $(X,G)$ is uniquely ergodic and  topo-isomorphic to its MEF
            with respect to its unique invariant measure $\mu$.
        \item[(iv)] $(X\times X,G)$ is pointwise uniquely ergodic and
            $(x,y)\mapsto\mu_{(x,y)}$ is continuous.
            \item[(v)]  $(X,G)$ has a unique invariant
    measure $\mu$ and $L^2 (X,\mu)$ can be written as an orthogonal sum of finite
    dimensional,  $G$-invariant subspaces consisting of continuous functions.
    \end{enumerate}
\end{corollary}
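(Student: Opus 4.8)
The plan is simply to collect the equivalences from the main theorems already established, checking that minimality lets us dispense with their side hypotheses. The equivalence (i) $\Leftrightarrow$ (iv) is exactly Theorem~\ref{thm:intro equivalence criterion of mean equi. involving pointwise u.e}, which requires no minimality, and (i) $\Leftrightarrow$ (v) is Theorem~\ref{thm:spectral-characterization}, whose standing hypothesis of minimality is in force. For (i) $\Leftrightarrow$ (ii) I would appeal to Theorem~\ref{thm:F-mean-equicontinuity implies mean-equicontinuity}: amenability of $G$ yields a $G$-invariant measure $\mu$ on $X$, and $\supp(\mu)$ is a non-empty closed $G$-invariant subset of the minimal system $(X,G)$, hence $\supp(\mu)=X$; thus the full-support hypothesis of that theorem is met and (i) $\Leftrightarrow$ (ii) follows.

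It remains to insert (iii). For (iii) $\Rightarrow$ (i): unique ergodicity means that ``$\pi\colon X\to\mef$ is a topo-isomorphy with respect to the unique invariant measure $\mu$'' coincides with ``$\pi$ is a topo-isomorphy with respect to every $G$-invariant measure'', i.e.\ $(X,G)$ is a topo-isomorphic extension of its maximal equicontinuous factor, so Theorem~\ref{thm:structural result mean equicontinuous systems} gives mean equicontinuity. For (i) $\Rightarrow$ (iii): by Theorem~\ref{thm:structural result mean equicontinuous systems}, $(X,G)$ is a topo-isomorphic extension of $(\mef,G)$, so $\pi$ is in particular a topo-isomorphy with respect to any invariant measure, and it only remains to see there is exactly one. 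By the already-established (i) $\Leftrightarrow$ (iv) the product $(X\times X,G)$ is pointwise uniquely ergodic; restricting to the diagonal $\{(x,x):x\in X\}$, which is conjugate to $(X,G)$, shows $(X,G)$ itself is pointwise uniquely ergodic, and since $(X,G)$ is minimal the orbit closure of any point is all of $X$, so $(X,G)$ carries a unique invariant measure. Hence (iii) holds.

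I do not expect a genuine obstacle in this corollary: each implication is a direct invocation of a theorem proven above, the only points needing a moment's care being that minimality provides the full-support invariant measure required by Theorem~\ref{thm:F-mean-equicontinuity implies mean-equicontinuity} and, together with pointwise unique ergodicity of the product, forces unique ergodicity of $(X,G)$; the genuine content of the statement is carried entirely by Theorems~\ref{thm:structural result mean equicontinuous systems}, \ref{thm:intro equivalence criterion of mean equi. involving pointwise u.e}, \ref{thm:F-mean-equicontinuity implies mean-equicontinuity} and \ref{thm:spectral-characterization}.
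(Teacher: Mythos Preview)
Your proposal is correct and matches the paper's approach: the corollary is stated in the introduction without a separate proof, being presented explicitly as the combination of Theorems~\ref{thm:structural result mean equicontinuous systems}, \ref{thm:intro equivalence criterion of mean equi. involving pointwise u.e}, \ref{thm:F-mean-equicontinuity implies mean-equicontinuity} and \ref{thm:spectral-characterization}, and your write-up does exactly this while verifying that minimality supplies the needed full-support measure and forces unique ergodicity. The only cosmetic difference is that the paper would more naturally derive unique ergodicity in (i)$\Rightarrow$(iii) directly from Theorem~\ref{thm: decomposition theorem}(a) (pointwise unique ergodicity of a topo-isomorphic extension of an equicontinuous system) rather than via the product and the diagonal, but your route is equally valid.
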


We finish this section with a discussion of the spectral characterization of
mean equicontinuity when $G$ is abelian.
This case is particularly important due to its relevance for the study of
aperiodic order.
Let $\widehat{G}$ be the dual group of $G$, i.e., the
group of all continuous group homomorphisms from $G$ to the unit circle and let
$(X,G)$ be a dynamical system with an invariant probability measure $\mu$.
Then $f\in L^2 (X,\mu)$ with $f\neq 0$ is called
an \emph{eigenfunction} to the \emph{eigenvalue} $\xi \in
\widehat{G}$ if $f (g\cdot) = \xi(g) f(\cdot)$
 for all $g\in G$.
Here, the equality is understood in the sense of $L^2$ functions.
If such an $f$ is continuous with $$f(g x) = \xi (g) f(x),$$ for all $x\in
X$ and $g\in G$ it is called a \emph{continuous eigenfunction}.
The dynamical system $(X,G)$ with $G$-invariant measure $\mu$ is said to have
\emph{discrete spectrum with continuous eigenfunctions} if  there
exists an orthonormal basis for $L^2(X,\mu)$ of continuous eigenfunctions.

\begin{corollary}\label{cor:spectral-characterization-abelian-case}
    Let $G$ be abelian. A minimal system $(X,G)$ is mean equicontinuous if and
    only if it is uniquely ergodic and has discrete spectrum with continuous
    eigenfunctions.
\end{corollary}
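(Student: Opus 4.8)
The plan is to derive the corollary directly from Theorem~\ref{thm:spectral-characterization}, so that all that remains is to reconcile the two descriptions of the Koopman representation on $L^2(X,\mu)$ in the abelian setting. Recall first that, since $(X,G)$ is minimal, any invariant measure $\mu$ has full support: $\supp\mu$ is closed, non-empty and $G$-invariant.

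I would begin with the easy implication. Assume $(X,G)$ is uniquely ergodic with discrete spectrum with continuous eigenfunctions, and let $\{f_i\}_{i\in I}$ be an orthonormal basis of $L^2(X,\mu)$ consisting of continuous eigenfunctions, say $f_i(g\cdot)=\xi_i(g)f_i(\cdot)$ with $\xi_i\in\widehat G$. Then each line $\C f_i$ is a one-dimensional — in particular finite-dimensional — $G$-invariant subspace consisting of continuous functions, and $L^2(X,\mu)=\bigoplus_{i\in I}\C f_i$ is an orthogonal sum of such subspaces. By Theorem~\ref{thm:spectral-characterization}, $(X,G)$ is mean equicontinuous.

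For the converse, assume $(X,G)$ is mean equicontinuous. By Theorem~\ref{thm:spectral-characterization} it is uniquely ergodic, and $L^2(X,\mu)=\bigoplus_{j}V_j$ with each $V_j$ finite-dimensional, $G$-invariant and consisting of continuous functions. Fix $j$. Since $G$ is abelian, the Koopman operators $U_g|_{V_j}$, $g\in G$, form a commuting family of unitary (hence normal) operators on the finite-dimensional space $V_j$; by the spectral theorem for commuting normal operators they admit a common orthonormal eigenbasis of $V_j$. (Alternatively, one may invoke that a strongly continuous finite-dimensional unitary representation of a locally compact abelian group decomposes into continuous characters, which also yields the continuity of the eigenvalues claimed below.) Performing this for every $j$ and collecting the resulting vectors yields an orthonormal basis $\{f_i\}$ of $L^2(X,\mu)$ such that each $f_i$ is continuous and $U_g f_i=c_i(g)f_i$ for some $c_i(g)\in\S^1$ and all $g\in G$.

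It remains to check that each $f_i$ is a continuous eigenfunction in the sense of the corollary, i.e.\ that $f_i(gx)=c_i(g)f_i(x)$ holds pointwise and that $c_i\in\widehat G$; this is the only place a genuine (if minor) argument is needed. Fix $i$ and write $f=f_i$, $c=c_i$. For each $g\in G$ the continuous functions $x\mapsto f(gx)$ and $x\mapsto c(g)f(x)$ agree $\mu$-a.e.; since $\supp\mu=X$ they agree everywhere, so $f(gx)=c(g)f(x)$ for all $x\in X$ and $g\in G$. Hence $|f|$ is a continuous $G$-invariant function on $X$, thus constant by minimality, and non-zero since $f\neq 0$; so $f$ vanishes nowhere and $c(g)=f(gx_0)/f(x_0)$ for any fixed $x_0\in X$. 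Joint continuity of the action shows $c$ is continuous, and it is a homomorphism since the $U_g$ are, so $c\in\widehat G$. Therefore $\{f_i\}$ is an orthonormal basis of continuous eigenfunctions, i.e.\ $(X,G)$ has discrete spectrum with continuous eigenfunctions. The one conceivable obstacle — simultaneous diagonalizability of the family $\{U_g|_{V_j}\}$ — disappears because of the finite-dimensionality of the $V_j$ supplied by Theorem~\ref{thm:spectral-characterization}; the substantive work has already been carried out there.
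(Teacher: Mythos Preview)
Your proof is correct. It essentially carries out the alternative argument the paper only sketches in one line: the paper's primary route for (i)$\Rightarrow$(ii) goes back to the maximal equicontinuous factor, uses that in the abelian case $\mef$ is itself a compact abelian group, and invokes Pontryagin duality to produce continuous characters on $\mef$ which pull back to continuous eigenfunctions on $X$; as an alternative, the paper simply remarks that the irreducible $V_\alpha$ from Theorem~\ref{thm:spectral-characterization} must be one-dimensional when $G$ is abelian. Your argument is a fleshed-out version of this alternative: rather than appealing to irreducibility, you take the (possibly reducible) finite-dimensional $V_j$ directly and simultaneously diagonalize the commuting unitaries $U_g|_{V_j}$. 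This is slightly more self-contained, since it avoids revisiting the MEF and the Peter--Weyl refinement to irreducibles. Your additional verification that the $L^2$-eigenvalue identity upgrades to a pointwise one (using $\supp\mu=X$) and that the eigenvalues are genuine continuous characters is a worthwhile detail the paper leaves implicit.
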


For subshifts associated to non-periodic primitive substitutions, a classical
result by Host \cite{Host1986} states that all eigenvalues possess a continuous
eigenfunction.
Hence, the previous corollary implies that these subshifts are mean
equicontinuous if and only if they have pure point spectrum (since they are always
minimal).
This yields, for instance, that the subshifts associated to the Fibonacci and
Tribonacci substitution are mean equicontinuous.
For more information, see for instance \cite{Fogg2002} and \cite{Queffelec2010}.
Further, a generalization of Host's result to primitive tiling substitutions of
$\R^n$ with finite local complexity can be found in \cite{Solomyak2007}.

\subsubsection*{Acknowledgments}

This project has received funding from the European Union's Horizon 2020
research and innovation program under the Marie Sk\l{}odowska-Curie grant agreement
No 750865.
Furthermore, MG acknowledges support by the DFG grants JA 1721/2-1 and GR 4899/1-1.
Moreover, the authors would also like to thank Dominik Kwietniak for bringing the
example depicted in Figure \ref{fig:Gegenbeispiel} to their attention.


\section{Some basic preliminaries on ergodic theory}

In this section, we discuss some definitions and statements of the ergodic
theory of general actions by locally compact $\sigma$-compact amenable groups.
In particular, we will be concerned with averages along Følner sequences where we pay
special attention to an exposition which only requires a very fundamental set of
tools.
In particular, we will only make use of the Mean Ergodic Theorem in the following
and avoid the more sophisticated Pointwise Ergodic Theorem by Lindenstrauss
\cite{Lindenstrauss2001}.

In order to provide an alternative characterization of
topo-isomorphic extensions,
let us make the following classical measure-theoretic observation whose proof is provided
for the convenience of the reader.
\begin{proposition}\label{prop:topo-isomorphy-via-unitary-operators}
Suppose $X$ and $Y$ are compact metric spaces, $\mu$ is a Borel probability
measure on $X$ and $h\: X\to Y$ is measurable.
Then, the operator
\begin{align}\label{eq: defn unitary operator}
    U_\mu:L_2(Y,h(\mu))\to L_2(X,\mu): f\mapsto f\circ h
\end{align}
is unitary if and only if $h$ is an isomorphism $\bmod\, 0$ with respect to
$\mu$ and $h(\mu)$.
\end{proposition}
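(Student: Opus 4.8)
The plan is to reduce the equivalence to a statement about surjectivity and then treat the two implications separately. First I would record that $U_\mu$ is \emph{always} a well-defined linear isometry: the push-forward identity $\int_X|f\circ h|^2\,d\mu=\int_Y|f|^2\,d(h(\mu))$ shows simultaneously that $f\mapsto f\circ h$ descends to $L_2$-classes and that it preserves norms, so its range is automatically closed. Hence ``$U_\mu$ unitary'' is equivalent to ``$U_\mu$ surjective'', and this is the only form of the hypothesis and conclusion I would use. (At the outset I would also note that we may assume $h$ Borel, replacing it if necessary by a Borel map agreeing with it $\mu$-a.e., which changes neither $h(\mu)$ nor $U_\mu$ nor the property of being an isomorphism $\bmod 0$.) For the easy implication, if $M\ssq X$, $N\ssq Y$ have full measure and $\psi=h|_M\:M\to N$ is a bi-measurable, measure-preserving bijection, then given $g\in L_2(X,\mu)$ I would set $\tilde g=g\circ\psi^{-1}$ on $N$ and $\tilde g=0$ off $N$; measure-preservation gives $\tilde g\in L_2(Y,h(\mu))$, and $\tilde g\circ h=g$ on $M$ gives $U_\mu\tilde g=g$, so $U_\mu$ is onto.

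The converse is where the work lies. Assuming $U_\mu$ surjective, I would fix a sequence $(f_n)_{n\in\N}$ in $C(X)$ separating the points of $X$ (possible since $X$ is compact metric), use surjectivity to write each $f_n=g_n\circ h$ $\mu$-a.e.\ for some $g_n\in L_2(Y,h(\mu))$, fix Borel representatives, and let $M$ be the Borel set of full $\mu$-measure on which all these equalities hold simultaneously. Since $(f_n)$ separates points, $h|_M$ is then injective: $h(x)=h(x')$ with $x,x'\in M$ forces $f_n(x)=g_n(h(x))=g_n(h(x'))=f_n(x')$ for every $n$. The key step is to promote this to the assertion that $N\=h(M)$ is measurable with $h(\mu)(N)=1$ and that $h|_M\:M\to N$ is a bi-measurable bijection; for this I would invoke the Lusin--Souslin theorem on standard Borel spaces (an injective Borel map sends Borel sets to Borel sets and has Borel inverse on its image). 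The remaining points are then immediate: $M\ssq h^{-1}(N)$ gives $h(\mu)(N)=\mu(h^{-1}(N))\ge\mu(M)=1$, and $\mu(M)=1$ gives $\mu((h|_M)^{-1}(A))=\mu(h^{-1}(A))=h(\mu)(A)$ for measurable $A\ssq N$, so $h|_M$ is measure preserving and $h$ is an isomorphism $\bmod 0$ with respect to $\mu$ and $h(\mu)$.

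I expect the main obstacle to be precisely this last, descriptive-set-theoretic step: the operator hypothesis only directly yields that a point-separating \emph{countable} family of functions factors through $h$ on a full set, i.e.\ that $h$ is injective there, and one has to convert this into an honest $\bmod 0$ point isomorphism by controlling the image set and the inverse map. Everything else --- the change-of-variables computations, the explicit construction of preimages in the easy direction, and the Borel-representative bookkeeping --- is routine.
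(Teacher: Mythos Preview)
Your proof is correct. The approach, however, differs from the paper's. The paper argues via measure algebras: from the unitarity of $U_\mu$ it observes that the map $\tilde A\mapsto (h^{-1}(A))^\sim$ is an isomorphism of measure algebras $\tilde{\mathcal B}(Y)\to\tilde{\mathcal B}(X)$ (surjectivity of $U_\mu$ forces every $B\in\mathcal B(X)$ to agree $\bmod\,0$ with some $h^{-1}(A)$, since the preimage of an indicator under an isometry that is onto must again be an indicator), and then invokes \cite[Theorem~2.2]{Walters1982} to lift this measure-algebra isomorphism to a point map $\phi\:M\to N$ which coincides with $h$ on $M$. You instead work directly at the point level: pulling a countable separating family in $C(X)$ back through $h$ yields a full-measure Borel set on which $h$ is injective, and the Lusin--Souslin theorem then supplies the Borel image and Borel inverse. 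Both routes ultimately rest on a descriptive-set-theoretic input (Walters' lifting theorem uses essentially the same machinery internally), but yours is more self-contained and makes the mechanism of injectivity explicit, whereas the paper's has the virtue of packaging the hard step as a single citation.
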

\begin{proof}
    We only show that unitarity implies isomorphy ($\!\bmod\, 0$), the
    ''if``-part is obvious.

    First, we have to fix some notation.
    For a compact metric space $(Z,d)$ we denote by $\mathcal B(Z)$ the Borel
    $\sigma$-algebra and by $\tilde{\mathcal B}(Z)$ the associated measure algebra (see, for example, \cite{Walters1982}
    for the notion of measure algebras).
    Since $U_\mu:L_2(Y,h(\mu))\to L_2(X,\mu)$
    is unitary, we can define an invertible map
    $\tilde\Phi:\tilde{\mathcal B}(Y)\to\tilde{\mathcal B}(X)$ by setting $\tilde \Phi(\tilde A)$ to be the
    equivalence class of $\Phi(A)$ in $\tilde{\mathcal B}(X)$,
    where
    \[
        U_\mu(\mathbf{1}_{A})=\mathbf{1}_{\Phi(A)},
    \]
    for $A\in\tilde A\in \tilde {\mathcal B}(Y)$.
    Note that $\Phi(A)=h^{-1}(A)$ (this also proves the well-definition of $\tilde\Phi$).
    One can check directly that $\tilde\Phi$ is a measure algebra isomorphism.
    Further, by \cite[Theorem~2.2]{Walters1982} we conclude that there
    exist sets $M\subseteq X$, $N\subseteq Y$ with $\mu(M)=h(\mu)(N)=1$
    and a Borel measurable invertible measure preserving map $\phi:M\to N$
    which induces $\tilde \Phi$, i.e.,\ $\tilde \Phi(\tilde A)=(\phi^{-1}(A\cap N))^{\sim}$ for all
    $A\in\mathcal B(Y)$, and coincides with $h$ on $M$.
    This proves the statement.
\end{proof}

Recall that any locally compact group $G$ admits a \emph{left
(right) Haar measure} (defined uniquely up to a positive
multiplicative constant) denoted by $m$ ($m_{r}$) which is
left (right) invariant, that is, for all $\varphi\in
L_1(G,m)$ and $g\in G$ we have
$\int\varphi(gs)dm(s)=\int\varphi(s)dm(s)$ ($\int\varphi(sg)dm_r(s)=\int\varphi(s)dm_r(s)$), where
$L_1(G,m)$ is the space of all Haar integrable functions on
$G$.
Note that from time to time we will also refer to the left/right Haar measure by
using the notation $\abs{\cdot}$ if there is no risk of ambiguity.

In the introduction we have already encountered the notion of a left
Følner sequence $(F_n)_{n\in\N}$ in $G$ consisting of  non-empty
compact sets in $G$ such that
    \begin{align}\label{eq:Foelner_seq}
        \lim\limits_{n\to\infty}\frac{m(gF_n\triangle F_n)}{m(F_n)}
            =0\quad\textnormal{for all }g\in G.
    \end{align}
There are also \emph{right Følner sequences} which fulfill an
analogue condition to \eqref{eq:Foelner_seq} where the left Haar
measure and the multiplication from the left is replaced by the
right Haar measure and multiplication from the right, respectively.
From now on, the standard assumption is that we deal with
left Haar measures and left Følner sequences if not stated otherwise.

Let $(X,d)$ be a compact metric space.
By $\mc C(X)$ we denote the set of all complex-valued continuous functions
on $X$ equipped with the uniform topology which is induced by
the sup norm $\Abs{\cdot}_\infty$.
Given a Borel probability measure $\mu$ on $X$ and $\varphi\in \mc C(X)$,
we set $\mu(\varphi)=\int\varphi \, d\mu$.

The next theorem is well known for $\Z$-actions and can be proven for the group
actions considered in this article by adapting the corresponding arguments from
\cite{Walters1982} and \cite{Furstenberg1981}, see also the short discussion
regarding Theorem 2.16 in \cite{MuellerRichard2013}.

\begin{theorem}\label{thm:characterisation of unique ergodicity}
    Let $(X,G)$ be a dynamical system.
    The following statements are equivalent:
    \begin{itemize}
        \item[(i)] $(X,G)$ has a unique $G$-invariant measure $\mu$.
        \item[(ii)] For each continuous function $\varphi$ on $X$ there is
            a Følner sequence $(F_n)_{n\in\N}$ with
            \[
                \lim\limits_{n\to\infty}\frac{1}{\abs{F_n}}\int\limits_{F_n}\varphi(tx)\,dm(t)=c,
            \]
            where $c$ is a constant independent of $x\in X$.
    \end{itemize}
    Further, if one of the above conditions hold, then the convergence in (ii)
    is uniform in $x\in X$, independent of the left Følner sequence $(F_n)_{n\in\N}$,
    and we have $c=\mu(\varphi)$.
\end{theorem}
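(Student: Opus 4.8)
The plan is to prove (ii)$\Rightarrow$(i) directly and (i)$\Rightarrow$(ii) in the stronger form asserted by the ``further'' clause, so that the supplementary claims fall out at once: any system satisfying (ii) also satisfies (i), hence also satisfies the strengthened conclusion. Throughout I would use that joint continuity of the action makes $(x,t)\mapsto\varphi(tx)$ continuous, so all averages over the compact sets $F_n$ are well defined, jointly measurable, and bounded by $\Abs{\varphi}_\infty$.

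For (ii)$\Rightarrow$(i): amenability already supplies at least one $G$-invariant measure, so it only remains to establish uniqueness. Given a $G$-invariant $\mu$ and $\varphi\in\mc C(X)$, I would pick a Følner sequence as in (ii) and apply Fubini to
\[
    \int_X\frac{1}{\abs{F_n}}\int_{F_n}\varphi(tx)\,dm(t)\,d\mu(x)
    =\frac{1}{\abs{F_n}}\int_{F_n}\Big(\int_X\varphi(tx)\,d\mu(x)\Big)dm(t).
\]
By $G$-invariance of $\mu$ the inner integral equals $\mu(\varphi)$ for every $t$, so the whole expression equals $\mu(\varphi)$ for all $n$. Since the integrand on the left converges pointwise to the constant $c$ and is uniformly bounded, dominated convergence gives $\mu(\varphi)=c$. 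As $\mc C(X)$ separates Borel probability measures (Riesz representation), $\mu$ is uniquely determined and $c=\mu(\varphi)$.

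For (i)$\Rightarrow$(ii) in strong form: let $\mu$ be the unique $G$-invariant measure and argue by contradiction, assuming there are $\varphi\in\mc C(X)$, a left Følner sequence $(F_n)_{n\in\N}$, $\eps>0$ and points $x_n\in X$ with $\abs{\frac{1}{\abs{F_n}}\int_{F_n}\varphi(tx_n)\,dm(t)-\mu(\varphi)}\geq\eps$ for all $n$. Setting $\nu_n(\psi)=\frac{1}{\abs{F_n}}\int_{F_n}\psi(tx_n)\,dm(t)$ defines Borel probability measures on $X$; since $X$ is compact metric, the space $\mathcal P(X)$ of such measures is weak-$*$ compact and metrizable, so I may pass to a convergent subsequence $\nu_n\to\nu$. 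The key computation is that for fixed $g\in G$ the substitution $s=gt$ together with left invariance of $m$ yields $\int_{F_n}\psi(gtx_n)\,dm(t)=\int_{gF_n}\psi(sx_n)\,dm(s)$, whence $\abs{\nu_n(\psi\circ g)-\nu_n(\psi)}\leq\Abs{\psi}_\infty\,m(gF_n\triangle F_n)/\abs{F_n}\to0$ by the Følner property. Passing to the limit shows $\nu(\psi\circ g)=\nu(\psi)$ for all $g$ and $\psi$, i.e.\ $\nu$ is $G$-invariant, so $\nu=\mu$; then $\nu_n(\varphi)\to\mu(\varphi)$ along the subsequence, contradicting the lower bound. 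Hence for \emph{every} left Følner sequence the averages converge to $\mu(\varphi)$ uniformly in $x\in X$, which gives (ii) with $c=\mu(\varphi)$ and all of the ``further'' assertions at once.

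The step I expect to be the main obstacle is verifying that the weak-$*$ limit $\nu$ is $G$-invariant: this is where the Følner property is genuinely used, and where one must be attentive to the compatibility of the \emph{left} Haar measure with \emph{left} translations (a right Følner sequence would not serve here). The remaining ingredients — Fubini, dominated convergence, sequential weak-$*$ compactness of $\mathcal P(X)$, and Riesz representation — are routine.
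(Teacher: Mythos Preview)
Your argument is correct. The paper does not actually write out a proof of this theorem, but instead cites the standard argument from Walters and Furstenberg (adapted to general amenable groups); your proposal is precisely that standard argument, and in particular your (ii)$\Rightarrow$(i) direction mirrors verbatim the computation the paper does carry out in the subsequent Proposition~\ref{prob:right Foelner implies unique ergodicity}.
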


For the sake of completeness, we provide a proof of the next statement.

\begin{proposition}\label{prob:right Foelner implies unique ergodicity}
    Let $(X,G)$ be a dynamical system.
    Suppose for each $\varphi \in \mc C(X)$ there is a right Følner sequence $(F_n)_{n\in\N}$
    and a constant $c\in \R$ with
    \[
        \lim\limits_{n\to\infty}\frac{1}{\abs{F_n}}\int\limits_{F_n}\varphi(tx)\,dm_r(t)=c,
    \]
    for all $x\in X$.
    Then $(X,G)$ has a unique $G$-invariant measure $\mu$ and $\mu(\varphi)=c$.
\end{proposition}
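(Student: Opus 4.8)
The goal is to reduce the ``right Følner'' hypothesis to the ``left Følner'' criterion of Theorem~\ref{thm:characterisation of unique ergodicity}(ii), and the natural device for converting between left and right is the inversion map $g\mapsto g^{-1}$ on $G$, which sends a right Haar measure to a left Haar measure and a right Følner sequence to a left Følner sequence (up to the modular-function issues, which do not affect the normalized averages). So first I would fix $\varphi\in\mc C(X)$ and a point $x\in X$, and introduce, for each $g\in G$, the ``orbit'' function $\psi_{x}(g)\=\varphi(gx)$ on $G$. The hypothesis says that the right-averages of $\psi_x$ along some right Følner sequence $(F_n)$ converge to the constant $c$, uniformly-in-nothing a priori but pointwise in $x$. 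The plan is \emph{not} to prove directly that $(X,G)$ satisfies Theorem~\ref{thm:characterisation of unique ergodicity}(ii) for the \emph{same} $\varphi$, but rather to show directly that \emph{every} $G$-invariant probability measure $\mu$ must satisfy $\mu(\varphi)=c$; since an invariant measure exists (amenability) and $\varphi$ was arbitrary in the $\|\cdot\|_\infty$-dense... actually in all of $\mc C(X)$, this pins $\mu$ down uniquely and gives $\mu(\varphi)=c$.

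Concretely: let $\mu$ be any $G$-invariant Borel probability measure. For each $n$, consider the average $I_n\=\frac{1}{|F_n|}\int_{F_n}\varphi(tx)\,dm_r(t)$ and integrate the identity $I_n = \frac{1}{|F_n|}\int_{F_n}\varphi(tx)\,dm_r(t)$ against $d\mu(x)$ over $X$; by Fubini (everything is bounded and jointly measurable, $X$ compact, $F_n$ compact) this gives
\begin{align*}
    \int_X I_n \, d\mu(x) = \frac{1}{|F_n|}\int_{F_n}\Big(\int_X \varphi(tx)\,d\mu(x)\Big)dm_r(t) = \frac{1}{|F_n|}\int_{F_n}\mu(\varphi)\,dm_r(t) = \mu(\varphi),
\end{align*}
where the middle equality uses $G$-invariance of $\mu$ (so $\int_X\varphi(tx)\,d\mu(x)=\int_X\varphi\,d\mu$ for every fixed $t$). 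On the other hand $|I_n|\le\|\varphi\|_\infty$ and $I_n\to c$ pointwise on $X$ by hypothesis, so dominated convergence gives $\int_X I_n\,d\mu(x)\to c$. Comparing the two computations yields $\mu(\varphi)=c$. Since $\mu$ was an arbitrary invariant measure and $\varphi$ an arbitrary continuous function, any two invariant measures agree on $\mc C(X)$ and hence (Riesz representation) coincide; combined with existence of an invariant measure this proves unique ergodicity, and the displayed computation gives $\mu(\varphi)=c$.

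I expect the only genuinely delicate point to be the measurability/Fubini justification: one needs $(t,x)\mapsto\varphi(tx)$ to be jointly measurable on $G\times X$ (it is continuous, since the action is continuous) and $F_n$ to carry finite right Haar measure (true, $F_n$ compact), so Tonelli/Fubini applies to the bounded function $\mathbf 1_{F_n}(t)\varphi(tx)$ — this is routine. A secondary subtlety is that one should not need to invoke anything about Følner-ness of $(F_n)$ at all in this argument (only finiteness and positivity of $|F_n|$): the convergence to a point-independent constant is doing all the work, and the hypothesis already hands us that. If one instead wanted to route through Theorem~\ref{thm:characterisation of unique ergodicity} literally via the inversion map, one would have to track the modular function $\Delta_G$ relating $dm_r(t)$ and $dm(t^{-1})$, and check that $(F_n^{-1})$ is left Følner with $|F_n^{-1}|$ comparable to $|F_n|$ in the relevant ratios; the direct argument above sidesteps this entirely, which is why I would prefer it.
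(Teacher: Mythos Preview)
Your proposal is correct and, despite the opening paragraph's detour through the inversion idea, the argument you actually carry out is exactly the paper's: integrate the time-averages against an arbitrary invariant $\mu$, swap by Fubini, use invariance to get $\mu(\varphi)$ on one side, and dominated convergence to get $c$ on the other. Your observation that the F{\o}lner property of $(F_n)$ is never used in this step is accurate and worth keeping in mind.
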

\begin{proof}
    As mentioned in the introduction, $(X,G)$ allows for a $G$-invariant measure
    $\mu$ on $X$.
    Now, using Fubini and dominated convergence, we have
    \[
        \int\limits_X\varphi\,d\mu
        =\frac{1}{\abs{F_n}}\int\limits_{F_n}\int\limits_X\varphi(tx)\,d\mu(x)dm_r(t)
        =\int\limits_X\frac{1}{\abs{F_n}}\int\limits_{F_n}\varphi(tx)\,dm_r(t)d\mu(x)
        \stackrel{n\to\infty}{\longrightarrow} c.
    \]
    Since finite Borel measures on compact metric spaces are uniquely determined
    by integrating continuous functions, we obtain that $\mu$ is the only $G$-invariant
    measure on $X$.
\end{proof}

Throughout this work, we will encounter Birkhoff averages of continuous functions,
i.e., limits of the above kind, at several places.
For that reason, we introduce the following notation: given a left Følner sequence
$\Fol$ and a continuous function $\phi$ on $X$, we set
\begin{align*}
    A_n(\Fol,\phi)(x)\=\frac{1}{\abs{F_n}}\int\limits_{F_n}\varphi(tx)\,dm(t)
\end{align*}
for $x\in X$ and $n\in \N$.
Furthermore, we introduce the following functions on $X$
\begin{align*}
    \overline{A}(\Fol,\phi)\: x\mapsto \limsup\limits_{n\to\infty}A_n(\Fol,\phi)(x)
    \quad\textnormal{and}\quad
    \underline{A}(\Fol,\phi)\: x\mapsto \liminf\limits_{n\to\infty}A_n(\Fol,\phi)(x).
\end{align*}
We simply write ${A}(\Fol,\phi)(x)$ for the above limits, provided they coincide
(as in the previous statements).
If $\Fol$ is a right Følner sequence, we refer to the analogous quantities (where the
left Haar measure is replaced by the right Haar measure) by the same symbols.

For a dynamical system $(X,G)$ with an ergodic measure $\mu$ and a left Følner
sequence $\Fol$ in $G$, we say a point $x\in X$ is \emph{($\mu$-)generic} with
respect to $\Fol$ if for every continuous function $\phi$ on $X$ the limit
${A}(\Fol,\phi)(x)$ exists and equals $\mu(\phi)$.
It is worth noting and easy to see that every $\mu$-generic point has a dense orbit
in the support of $\mu$.
For the purpose of being self-contained, we provide a proof of the next well-known
statement.
Note that a direct consequence of this statement is the well-known singularity of
ergodic measures.

\begin{theorem}\label{thm: existence of generic points}
    Let $(X,G)$ be a topological dynamical system with an ergodic measure $\mu$.
    Then every left Følner sequence $\Fol=(F_n)_{n\in\N}$ allows for a subsequence
    $\Fol'=(F'_n)_{n\in\N}$ with respect to which $\mu$-almost every point is generic.
\end{theorem}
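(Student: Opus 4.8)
The plan is to extract the subsequence by a diagonal argument over a countable dense set of continuous functions, using the Mean Ergodic Theorem as the only real input. First I would fix a countable set $\{\phi_k\}_{k\in\N}\subseteq\mc C(X)$ which is dense in the sup norm (this exists since $X$ is compact metric, so $\mc C(X)$ is separable), and I may as well assume each $\phi_k$ is real-valued and that the constant functions are included. For a single $\phi=\phi_k$ and the given left Følner sequence $\Fol=(F_n)$, the Mean Ergodic Theorem (in $L_2(X,\mu)$) gives that the averages $A_n(\Fol,\phi)$ converge in $L_2(X,\mu)$ to the conditional expectation of $\phi$ onto the $G$-invariant functions, which by ergodicity of $\mu$ is the constant $\mu(\phi)$. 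Convergence in $L_2$ along the full sequence yields a subsequence along which convergence holds $\mu$-almost everywhere; that is the mechanism I would iterate.

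Concretely, I would argue inductively: starting from $\Fol^{(0)}=\Fol$, given a subsequence $\Fol^{(k-1)}$ of $\Fol$, apply the Mean Ergodic Theorem to $\phi_k$ along $\Fol^{(k-1)}$ to get $A_n(\Fol^{(k-1)},\phi_k)\to\mu(\phi_k)$ in $L_2(X,\mu)$, and then pass to a further subsequence $\Fol^{(k)}$ of $\Fol^{(k-1)}$ along which $A_n(\Fol^{(k)},\phi_k)(x)\to\mu(\phi_k)$ for $\mu$-a.e.\ $x$. Then take the diagonal Følner sequence $\Fol'=(F^{(n)}_n)_{n\in\N}$, i.e.\ the $n$-th term of $\Fol^{(n)}$. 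Standard diagonalization shows $\Fol'$ is (eventually) a subsequence of every $\Fol^{(k)}$, hence for each fixed $k$ we have $A_n(\Fol',\phi_k)(x)\to\mu(\phi_k)$ for $x$ outside a $\mu$-null set $N_k$; letting $N=\bigcup_k N_k$, which is still $\mu$-null, every $x\notin N$ is generic for $\Fol'$ along the countable dense family $\{\phi_k\}$.

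Finally I would upgrade from the dense family to all of $\mc C(X)$ by a routine uniform-approximation estimate: for arbitrary $\psi\in\mc C(X)$ and $\eps>0$ choose $\phi_k$ with $\Abs{\psi-\phi_k}_\infty<\eps$; then for every $x$ and every $n$ one has $\abs{A_n(\Fol',\psi)(x)-A_n(\Fol',\phi_k)(x)}\le\eps$ because $A_n$ is an average over a probability measure $m/\abs{F_n}$ on $F_n$, and likewise $\abs{\mu(\psi)-\mu(\phi_k)}\le\eps$, so for $x\notin N$ the limsup and liminf of $A_n(\Fol',\psi)(x)$ both lie within $2\eps$ of $\mu(\psi)$; letting $\eps\to0$ gives ${A}(\Fol',\psi)(x)=\mu(\psi)$ for all $x\notin N$. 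One must also check $\Fol'$ is genuinely a Følner sequence, but this is immediate since any subsequence of a Følner sequence is again one. I expect no serious obstacle here; the only point requiring a little care is the bookkeeping in the diagonal argument — ensuring that the tail of $\Fol'$ really is a subsequence of each $\Fol^{(k)}$ so that the a.e.\ convergence statements transfer — and making sure we invoke only the Mean Ergodic Theorem rather than a pointwise ergodic theorem, which is exactly why we can only get convergence along a subsequence and only almost everywhere.
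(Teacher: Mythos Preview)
Your proposal is correct and follows essentially the same argument as the paper: fix a countable dense family in $\mc C(X)$, use the Mean Ergodic Theorem to extract nested subsequences giving a.e.\ convergence for each function, diagonalize, and then pass to arbitrary continuous functions by uniform approximation. The only cosmetic differences are that the paper phrases the final step via continuity of $\phi\mapsto\overline{A}(\Fol',\phi)(x)$, $\underline{A}(\Fol',\phi)(x)$, $\mu(\phi)$ rather than an explicit $\eps$-estimate, and works in $L_1$ instead of $L_2$.
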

\begin{proof}
    Let $\Fol$ be a left Følner sequence and $(\phi_\ell)_{\ell\in\N}$ be a dense
    sequence in $\mc C(X)$ (which exists due to Stone-Weierstrass).
    By the Mean Ergodic Theorem, ${A_n}(\Fol,\varphi_1)(x)\stackrel{L_1}{\longrightarrow}\mu(\phi_1)$.
    There is hence a subsequence $\Fol^{\phi_1}=(F^{\phi_1}_n)_{n\in\N}$ of $\Fol$
    such that ${A_n}(\Fol^{\phi_1},\varphi_1)(x)\to \mu(\phi_1)$ for all $x$ in a
    full measure set $X_{\phi_1}\ssq X$.
    By inductively repeating the above argument, we get that for each $\ell\in \N$
    there is a subsequence $\Fol^{\phi_{\ell+1}}$ of $\Fol^{\phi_\ell}$ such that
    ${A_n}(\Fol^{\phi_{\ell+1}},\varphi_{\ell+1})(x)\to \mu(\phi_{\ell+1})$ for all
    $x$ in a full measure set $X_{\phi_{\ell+1}}\ssq X$.
    Set $X_{\mc C(X)}=\bigcap_{\ell\in\N}X_{\phi_\ell}$ and $\Fol'=(F^{\phi_n})_{n\in\N}$.
    Clearly, we have $\mu(X_{\mc C(X)})=1$.

    Moreover, $\overline{A}(\Fol',\varphi_\ell)(x)=\underline{A}(\Fol',\varphi_\ell)(x)=\mu(\phi_\ell)$
    for all $\ell\in \N$ and $x\in X_{\mc C(X)}$.
    Note that for every fixed $x\in X$ we have that $\overline{A}(\Fol',\varphi)(x)$,
    $\underline{A}(\Fol',\varphi)(x)$, and $\mu(\phi)$ depend continuously on
    $\phi \in \mc C(X)$.
    Altogether, we thus have for every $\phi\in \mc C(X)$ and every $x\in X_{\mc C(X)}$
    that
    \begin {align*}
        \overline{A}(\Fol',\varphi)(x)=\lim_{j\to\infty}\overline{A}(\Fol',\varphi_{\ell_j})(x)
        &=\lim_{j\to\infty}\underline{A}(\Fol',\varphi_{\ell_j})(x)\\
        &=\underline{A}(\Fol',\varphi)(x)=\lim_{j\to\infty}\mu(\phi_{\ell_j})=\mu(\phi),
    \end {align*}
    where $(\phi_{\ell_j})_{j\in\N}$ is a subsequence of $(\phi_\ell)_{\ell\in\N}$
    with $\phi_{\ell_j}\to\phi$.
\end{proof}

We will need the following auxiliary statement which is immediately linked to
the ergodic representation of invariant measures, see for instance
\cite{Farrell1962} for more information.

\begin{lemma}[{\cite[Lemma 6]{Farrell1962}}]\label{lem: ergodic representation}
    Let $\mu$ be a $G$-invariant measure.
    If $\mu(A)>0$ for some Borel measurable set $A\subseteq X$, then there is
    an ergodic $G$-invariant measure $\nu$ with $\nu(A)>0$.
\end{lemma}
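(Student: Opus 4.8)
The plan is to deduce the statement from the ergodic decomposition of $\mu$. Let $\mc M_G(X)$ denote the set of $G$-invariant Borel probability measures on $X$; by amenability of $G$ it is non-empty, and it is a convex, weak-$*$ compact and---since $\mc C(X)$ is separable---metrizable subset of the dual of $\mc C(X)$. As recalled in Section~\ref{sec:Notation}, the extreme points of $\mc M_G(X)$ are precisely the ergodic measures. The decomposition theorem of \cite{Farrell1962} (a form of Choquet's theorem) then provides a Borel probability measure $\tau$ on $\mc M_G(X)$ which is concentrated on the ergodic measures and has $\mu$ as its barycentre, i.e.\
\[
    \mu(\phi)=\int \nu(\phi)\,d\tau(\nu)\qquad\text{for every }\phi\in\mc C(X).
\]

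The next step is to upgrade this identity from $\mc C(X)$ to the indicator $\mathbf 1_A$. Let $\mathcal H$ be the collection of bounded Borel functions $\psi\:X\to\R$ such that $\nu\mapsto\nu(\psi)$ is $\tau$-measurable and $\mu(\psi)=\int\nu(\psi)\,d\tau(\nu)$. By the displayed identity $\mathcal H\supseteq\mc C(X)$, and $\mathcal H$ is a vector space containing the constants. Moreover, $\mathcal H$ is closed under bounded monotone pointwise limits: if $\psi_k\uparrow\psi$ with $\psi_k\in\mathcal H$ and $\psi$ bounded, then for each fixed $\nu$ we have $\nu(\psi_k)\uparrow\nu(\psi)$ by monotone convergence, so $\nu\mapsto\nu(\psi)$ is a pointwise limit of $\tau$-measurable functions, and a second application of monotone convergence (once for $\mu$, once for $\tau$) yields $\mu(\psi)=\int\nu(\psi)\,d\tau(\nu)$. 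Since $\mc C(X)$ is closed under multiplication and generates the Borel $\sigma$-algebra of $X$, the (functional) monotone class theorem gives that $\mathcal H$ contains all bounded Borel functions; in particular $\mathbf 1_A\in\mathcal H$, so that
\[
    \mu(A)=\int \nu(A)\,d\tau(\nu).
\]

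Finally, since $\mu(A)>0$ and the integrand $\nu\mapsto\nu(A)$ is non-negative, it cannot vanish for $\tau$-almost every $\nu$; hence $\tau(\{\nu:\nu(A)>0\})>0$. As $\tau$ is concentrated on the ergodic measures, there is an ergodic $G$-invariant measure $\nu$ with $\nu(A)>0$, as desired.

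I expect the only genuine obstacle to lie in the first step, namely the ergodic decomposition (equivalently, the Choquet integral representation of $\mu$ over the ergodic measures) for actions of a general locally compact $\sigma$-compact amenable group; in keeping with the minimal set of tools used in this section, one simply invokes \cite{Farrell1962} rather than reproving it from scratch, which would require setting up the relevant Choquet-theoretic machinery. The two remaining steps---the monotone-class extension of the barycentre formula and the concluding pigeonhole argument---are entirely routine.
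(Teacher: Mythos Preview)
The paper does not give its own proof of this lemma; it simply quotes it as \cite[Lemma~6]{Farrell1962} and uses it as a black box. Your argument, by contrast, supplies a full proof via the ergodic decomposition of $\mu$: invoke the Choquet-type representation of $\mu$ as a barycentre of a probability measure $\tau$ concentrated on the ergodic measures, extend the barycentre identity from $\mc C(X)$ to bounded Borel functions by a monotone class argument, and then read off the conclusion from $\mu(A)=\int\nu(A)\,d\tau(\nu)>0$. This is correct and is, in essence, exactly how the result is obtained in \cite{Farrell1962} (where the representation is the main point); your monotone class step and the final pigeonhole are indeed routine. So there is nothing to compare beyond noting that you have reproduced the standard proof where the paper was content with a citation.
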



\section{Topo-isomorphic extensions}\label{sec:topo-isomorphic extensions}

In the following we establish the equivalence of (Weyl-) mean equicontinuity and topo-isomorphy
and thus prove our main structural result (Theorem \ref{thm:structural result mean
equicontinuous systems}).
To that end, we first gather some basics on
topo-isomorphic extensions in Subsection \ref{subsec:basic}.
Then Theorem~\ref{thm:topo-isomorphy implies mean equicontinuity} (in
Subsection~\ref{subsec:topo-isomorphy}) yields one direction
of the main theorem.
Theorem \ref{thm: mean equicontinuity implies topo-isomorphy} (in
Subsection \ref{subsec:mean-equi}) yields the other direction.

Theorem~\ref{thm:structural result mean equicontinuous systems}
naturally suggests to also look at the relation between mean equicontinuous systems and their
topo-isomorphic extensions.
We show that the preservation of the maximal equicontinuous factor is a
characteristic of such extensions (see Subsection \ref{subsec:further}).


\subsection{Basics on topo-isomorphic extensions}\label{subsec:basic}

In this section we explain the structure of topo-isomorphic extensions over
equicontinuous systems.
Roughly speaking, such systems are partitioned into uniquely ergodic
components and this will be relevant in our considerations hereafter.

\begin{proposition}\label{prop: unitarity implies distant ergodic components}
    Suppose $(X,G)$ is a topo-isomorphic extension of $(Y,G)$ via the factor
    map $h:X\to Y$.
    If $\mu_1$ and $\mu_2$ are two distinct ergodic $G$-invariant measures on $X$, then
    the image measures $h(\mu_1)$ and $h(\mu_2)$ differ as well.
\end{proposition}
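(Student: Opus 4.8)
The plan is to argue by contradiction, exploiting the isomorphism $\bmod\,0$ property to transport a separating set from $Y$ back to $X$. Suppose that $\mu_1 \neq \mu_2$ are ergodic and $G$-invariant on $X$, but $h(\mu_1) = h(\mu_2) =: \nu$. By definition of topo-isomorphy, $h$ is an isomorphism $\bmod\,0$ with respect to $\mu_i$ and $\nu$ for both $i = 1, 2$. So there are full-measure Borel sets $M_i \ssq X$ and $N_i \ssq Y$ with $\mu_i(M_i) = \nu(N_i) = 1$ and Borel bi-measurable measure-preserving bijections $h_i' \: M_i \to N_i$ agreeing with $h$ on $M_i$.

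The key step is this: since $\mu_1 \neq \mu_2$ on $X$, there is a Borel set $A \ssq X$ with $\mu_1(A) \neq \mu_2(A)$. I would like to replace $A$ by a set of the form $h^{-1}(B)$ for some Borel $B \ssq Y$ — such "$h$-pullback" sets form a sub-$\sigma$-algebra, and if $\mu_1$ and $\mu_2$ agreed on all of them we would be done deriving the contradiction from the other side; but a priori they might agree on all pullback sets and differ only on finer sets, so this needs care. Here is where I would use the unitary operator formulation: by Proposition~\ref{prop:topo-isomorphy-via-unitary-operators}, the operator $U_{\mu_i}\: L_2(Y,\nu) \to L_2(X,\mu_i)$, $f \mapsto f \circ h$, is unitary for $i = 1,2$. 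Unitarity (in particular surjectivity) means that every $\mu_i$-square-integrable function on $X$ agrees $\mu_i$-a.e.\ with a function of the form $f \circ h$. Applying this to the indicator $\mathbf 1_A$: there exist Borel $B_i \ssq Y$ with $\mathbf 1_A = \mathbf 1_{B_i} \circ h = \mathbf 1_{h^{-1}(B_i)}$ in $L_2(X,\mu_i)$, i.e.\ $\mu_i(A \triangle h^{-1}(B_i)) = 0$ for $i = 1, 2$. Hence $\mu_i(A) = \mu_i(h^{-1}(B_i)) = \nu(B_i)$.

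Now I need the two sets $B_1$ and $B_2$ to be comparable. Since $h^{-1}(B_1) = A = h^{-1}(B_2)$ up to $\mu_1$-null sets, and $h = h_1'$ on the full-$\mu_1$-measure set $M_1$, I get $h_1'^{-1}(B_1 \cap N_1) = h_1'^{-1}(B_2 \cap N_1)$ up to $\mu_1$-null sets; pushing forward by the measure-preserving bijection $h_1'$ gives $\nu(B_1 \triangle B_2) = 0$. Therefore $\nu(B_1) = \nu(B_2)$, and combining with the previous paragraph, $\mu_1(A) = \nu(B_1) = \nu(B_2) = \mu_2(A)$ — contradicting the choice of $A$. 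This completes the proof.

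The main obstacle I anticipate is precisely the bookkeeping needed to pass between the "$\bmod\,0$" bijections $h_i'$ on the sets $M_i$ (for two different measures $\mu_1, \mu_2$, hence two different full-measure sets) and the globally defined map $h$; one must be a little careful that, say, $M_1$ need not have full $\mu_2$-measure, so the comparison of $B_1$ and $B_2$ has to be carried out entirely on the $Y$-side via $\nu$, as above, rather than naively intersecting domains on the $X$-side. Everything else is a routine use of unitarity (surjectivity of $U_{\mu_i}$) together with the fact that indicators of sets are mapped to indicators of sets — which is already recorded in the proof of Proposition~\ref{prop:topo-isomorphy-via-unitary-operators}.
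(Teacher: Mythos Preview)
There is a genuine gap in the step where you claim ``$h^{-1}(B_1) = A = h^{-1}(B_2)$ up to $\mu_1$-null sets.'' The first equality is correct by construction of $B_1$, but the second is only known up to $\mu_2$-null sets, since $B_2$ was produced by the surjectivity of $U_{\mu_2}$, not $U_{\mu_1}$. A $\mu_2$-null set can very well have positive $\mu_1$-measure (indeed, distinct ergodic measures are mutually singular), so you cannot pass from $\mu_2(A\triangle h^{-1}(B_2))=0$ to $\mu_1(A\triangle h^{-1}(B_2))=0$. Consequently the deduction $\nu(B_1\triangle B_2)=0$ is unjustified, and the argument breaks down precisely at the point you yourself flagged as delicate: the two isomorphisms $\bmod\,0$ live over different full-measure sets and cannot be directly compared.

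The paper's proof avoids this entirely by exploiting that topo-isomorphy is assumed for \emph{all} invariant measures, not just ergodic ones. One sets $\mu=\tfrac12(\mu_1+\mu_2)$; since $\mu_1\neq\mu_2$ are ergodic, $\mu$ is not ergodic, hence (by topo-isomorphy with respect to $\mu$) neither is $h(\mu)$ --- but $h(\mu)=\tfrac12(h(\mu_1)+h(\mu_2))=h(\mu_1)$, which is ergodic, a contradiction. If you want to salvage your line of argument, the same idea works: use the unitarity of $U_\mu$ (for this convex combination $\mu$) to find a single $B$ with $\mu(A\triangle h^{-1}(B))=0$; then both $\mu_i(A\triangle h^{-1}(B))=0$, whence $\mu_1(A)=\nu(B)=\mu_2(A)$.
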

\begin{proof}
    Assume for a contradiction that there exist distinct ergodic $G$-invariant
    measures $\mu_1$ and $\mu_2$ such that $h(\mu_1)=h(\mu_2)$.
    Since $h$ is a topo-isomorphy, $h(\mu_1)$ is ergodic.
    Now, consider $\mu=1/2\cdot(\mu_1+\mu_2)$.
    Clearly, $\mu$ is not ergodic, since it is a convex combination of two distinct ergodic measures.
    Since $h$ is a topo-isomorphy, $h(\mu)$ is not ergodic, too.
    This contradicts $h(\mu)=1/2\cdot(h(\mu_1)+h(\mu_2))=h(\mu_1)$.
\end{proof}

We will make use of the following classical lemma (see, for example, \cite{Auslander1988})
which gives that the notions of transitivity and minimality coincide for equicontinuous systems.
\begin{lemma}\label{lem: equicont partition}
    If $(X,G)$ is equicontinuous, then for each $x\in X$ we have that $\overline {Gx}$ is minimal.
\end{lemma}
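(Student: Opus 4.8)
The plan is to use the invariant metric $\hat d(x,y) = \sup_{g\in G} d(gx,gy)$, which (as noted in the excerpt) induces the same topology on $X$ as $d$ and satisfies $\hat d(gx,gy) = \hat d(x,y)$ for all $g \in G$. Fix $x \in X$ and let $Y = \overline{Gx}$; note $Y$ is closed and $G$-invariant, hence $(Y,G)$ is itself a (topological) dynamical system, and it is transitive with $x$ a transitive point. To show $(Y,G)$ is minimal, I would show that $\overline{Gy} = Y$ for every $y \in Y$. By symmetry of the roles it suffices to show $x \in \overline{Gy}$ for every $y \in Y$; indeed if that holds then $Gx \ssq \overline{Gy}$, so $Y = \overline{Gx} \ssq \overline{Gy} \ssq Y$, giving $\overline{Gy} = Y$.

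So fix $y \in Y$ and $\eps > 0$; I want some $g \in G$ with $\hat d(gy, x) < \eps$ (using $\hat d$ is harmless since it induces the same topology). Since $x$ is transitive in $Y$ and $y \in Y$, there is $g_0 \in G$ with $\hat d(g_0 x, y) < \eps$. Applying the isometry $g_0^{-1}$ (which is a bijection of $X$ preserving $\hat d$) gives $\hat d(x, g_0^{-1} y) = \hat d(g_0 x, y) < \eps$. Thus $g \= g_0^{-1}$ satisfies $\hat d(gy, x) < \eps$. As $\eps > 0$ was arbitrary, $x \in \overline{Gy}$, completing the argument.

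The only mild subtlety — and the step I'd flag as needing a line of care rather than being a genuine obstacle — is the legitimacy of passing between $d$ and $\hat d$: one must invoke that $\hat d$ is a metric inducing the original topology (so closures, density, and "$\eps$-close" statements are unchanged) and that each $g \in G$ acts as a $\hat d$-isometry, in particular a $\hat d$-homeomorphism, so that $g_0^{-1}$ may be applied to the inequality. Everything else is a short chase through the definitions of transitivity and minimality. One could alternatively phrase the whole proof without naming $\hat d$, directly estimating $\sup_{h} d(h g y, h x)$ via equicontinuity, but using the invariant metric makes the symmetry argument transparent and is the cleanest route.
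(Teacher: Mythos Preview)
Your argument is correct. The passage to the invariant metric $\hat d$ is legitimate for exactly the reasons you flag (it induces the original topology and renders the action isometric, both of which are stated in Section~\ref{sec:Notation}), and the key step --- turning $\hat d(g_0 x,y)<\eps$ into $\hat d(x,g_0^{-1}y)<\eps$ via the $\hat d$-isometry $g_0^{-1}$ --- is valid. From there $x\in\overline{Gy}$ and hence $\overline{Gy}=Y$ follow as you indicate.

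As for comparison with the paper: the paper does not actually prove this lemma. It is stated as a classical fact with a reference to Auslander's book \cite{Auslander1988}, so there is no ``paper's own proof'' to compare against. Your argument is essentially the standard one (exploit that in an isometric system the relation ``$y$ lies in the orbit closure of $x$'' becomes symmetric), and it would serve perfectly well as a self-contained proof were one desired.
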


Regarding the next statements, see also Theorem 14
(Decomposition Theorem) in \cite{Auslander1959} for the case of $\Z$-actions.
\begin{proposition}\label{prop:hilfe}
    Assume that $(X,G)$ is a topo-isomorphic extension of an equicontinuous system
    $(Y,G)$ with factor map $h$.
    \begin{enumerate}
        \item[(a)] If $A\ssq X$ is transitive, then $h(A)$ is minimal.
            In particular, if $B$ is another transitive subset of $X$, then
            either $h(A) = h(B)$ or $h(A) \cap h(B) = \emptyset$.
        \item[(b)] Let $A$ be a closed $G$-invariant subset of $X$.
            The set $h(A)$ is minimal if and only if $A$ is uniquely ergodic.
    \end{enumerate}
\end{proposition}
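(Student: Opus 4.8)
The plan is to deduce everything from Lemma~\ref{lem: equicont partition} (in an equicontinuous system every orbit closure is minimal) together with the stated fact that a topo-isomorphy preserves (non-)ergodicity of invariant measures. Throughout I use that $h(A)$ is a compact, hence closed, $G$-invariant subset of $Y$, so $(h(A),G)$ is a subsystem of the equicontinuous system $(Y,G)$ and is itself equicontinuous, and that transitivity, minimality and invariance pass through factor maps. For \textbf{(a)}: if $A\ssq X$ is transitive with transitive point $x$ (so $\overline{Gx}=A$), then $h|_A\colon A\to h(A)$ is a factor map, hence $(h(A),G)$ is transitive, and in fact $\overline{Gh(x)}=h(A)$ -- the inclusion ``$\ssq$'' since $h(A)$ is closed, $G$-invariant and contains $h(x)$, and ``$\supseteq$'' since $h(A)=h(\overline{Gx})\ssq\overline{h(Gx)}=\overline{Gh(x)}$ by continuity of $h$. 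Applying Lemma~\ref{lem: equicont partition} to the point $h(x)\in Y$ shows that $h(A)=\overline{Gh(x)}$ is minimal. The ``in particular'' is then immediate: $h(A)$ and $h(B)$ are minimal subsets of $(Y,G)$, and distinct minimal sets are disjoint.

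For \textbf{(b)}, suppose first that $A$ is uniquely ergodic but, for contradiction, that $h(A)$ is not minimal. Since $(h(A),G)$ is equicontinuous, Lemma~\ref{lem: equicont partition} provides two distinct -- hence disjoint -- minimal sets $M_1,M_2\ssq h(A)$ (choose $y\in h(A)$ with $\overline{Gy}\subsetneq h(A)$, then $y'\in h(A)\setminus\overline{Gy}$, and set $M_1=\overline{Gy}$, $M_2=\overline{Gy'}$). The sets $A_i\=A\cap h^{-1}(M_i)$ for $i=1,2$ are non-empty (as $M_i\ssq h(A)$), closed, $G$-invariant and disjoint, so amenability yields a $G$-invariant probability measure $\nu_i$ on each; viewed as measures on $A$ these satisfy $\supp\nu_i\ssq A_i$, hence $\nu_1\neq\nu_2$, contradicting unique ergodicity of $A$.

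Conversely, assume $h(A)$ is minimal; this is the substantial direction. The key preliminary step is to observe that $(A,G)$ is itself a topo-isomorphic extension of $(h(A),G)$ via $h|_A$: any $G$-invariant measure $\mu$ on $A$ is, viewed on $X$, a $G$-invariant measure supported in $A$, and since $h\colon X\to Y$ is a topo-isomorphy it is an isomorphism $\bmod\,0$ with respect to $\mu$ and $h(\mu)$ -- which, as all the relevant measures live on $A$, respectively $h(A)$, is precisely the corresponding statement for $h|_A$. Next, $(h(A),G)$ is minimal and equicontinuous, hence uniquely ergodic by the classical structure theory of equicontinuous systems (see \cite{Auslander1988}); denote by $\nu$ its unique -- and hence ergodic -- invariant measure. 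Now let $\mu$ be any $G$-invariant probability measure on $A$. Then $h(\mu)=\nu$, so $h|_A$ is a topo-isomorphy with respect to $\mu$ and $\nu$, and since the push-forward of an invariant measure under a topo-isomorphy is ergodic precisely when the measure is, $\mu$ is ergodic because $\nu$ is. Hence every element of the non-empty convex set of $G$-invariant probability measures on $A$ is extremal, which forces this set to be a singleton; that is, $A$ is uniquely ergodic. The main obstacle lies exactly here: one needs both that restricting a topo-isomorphic extension to a closed invariant subset again gives a topo-isomorphic extension (this genuinely uses that topo-isomorphy is required with respect to \emph{every} invariant measure) and the classical unique ergodicity of minimal equicontinuous systems; with these two inputs in hand, the passage from ``every invariant measure on $A$ is ergodic'' to ``$A$ is uniquely ergodic'' is a one-line convexity argument.
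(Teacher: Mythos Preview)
Your argument is correct throughout. Part (a) and the direction ``$h(A)$ minimal $\Rightarrow$ $A$ uniquely ergodic'' of (b) are essentially the paper's proof: the paper likewise invokes unique ergodicity of minimal equicontinuous systems and then uses that distinct ergodic measures on $X$ have distinct push-forwards (its Proposition~\ref{prop: unitarity implies distant ergodic components}), which is the same ergodicity-preservation principle you appeal to, just packaged slightly differently; your explicit reduction to $h|_A$ being a topo-isomorphy is an extra but harmless step. The one genuine difference is the converse direction ``$A$ uniquely ergodic $\Rightarrow$ $h(A)$ minimal'': you argue by contraposition, pulling back two disjoint minimal subsets of $h(A)$ to produce two distinct invariant measures on $A$, whereas the paper argues directly that any two orbit closures $\overline{Gx},\overline{Gy}\ssq A$ must intersect (else they would carry distinct invariant measures) and then applies part (a) to conclude $h(\overline{Gx})=h(\overline{Gy})$, hence $h(A)$ is a single minimal set. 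Both routes are short and elementary; yours has the minor advantage of not needing (a) for this direction, while the paper's makes the role of (a) in the overall picture more visible.
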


\begin{proof}
(a) Since factor maps preserve transitivity, this follows from Lemma~\ref{lem: equicont partition}.

\smallskip

(b) Suppose $h(A)$ is minimal.
Since $(Y,G)$ is equicontinuous,
minimal subsets are uniquely ergodic (this classical fact also follows
from Theorem~\ref{thm: unique ergodicity on orbit closures of points in supp inv measure} below).
Hence, $h$ maps every invariant measure on $A$ to the same invariant
measure on $h(A)$.
By Proposition \ref{prop: unitarity implies
distant ergodic components}, $A$ is uniquely ergodic.

Conversely, suppose $A$ is uniquely ergodic.
As any orbit closure carries an invariant measure,
$\overline {Gx}$ and $\overline{Gy}$ have a non-empty intersection for $x,y\in A$.
Now, (a) yields $h(\overline{Gx} ) = h(\overline{Gy})$
($x,y\in A$), that is, $A$ is transitive.
Again due to (a), $h(A)$ is minimal.
\end{proof}

\begin{theorem}\label{thm: decomposition theorem}
    Assume that $(X,G)$ is a topo-isomorphic extension of an equicontinuous system
    $(Y,G)$ with factor map $h:X\to Y$.
    Then the following statements are true.
    \begin{enumerate}
        \item[(a)] $(X,G)$ is pointwise uniquely ergodic.
        \item[(b)] If $\mu$ and $\nu$ are distinct ergodic
        measures on $X$ supported on transitive sets $A_\mu$ and $A_\nu$, respectively, then
        $h(A_\mu)\cap h(A_\nu)=\emptyset$.
    \end{enumerate}
\end{theorem}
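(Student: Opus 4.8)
The plan is to deduce both parts directly from the structural results already established in Subsection~\ref{subsec:basic}, chiefly Propositions~\ref{prop: unitarity implies distant ergodic components} and~\ref{prop:hilfe}. No new tools should be needed: the measure-algebraic content of topo-isomorphy and the partition of the equicontinuous base into minimal pieces have already been isolated there, so the Decomposition Theorem should reduce to short combinatorial bookkeeping with those statements.

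For part~(a), I would fix $x\in X$ and consider the orbit closure $A\=\overline{Gx}$, a non-empty closed $G$-invariant transitive subset of $X$ (with transitive point $x$). Applying Proposition~\ref{prop:hilfe}(a) to the transitive set $A$ yields that $h(A)$ is minimal. Since $A$ is closed and $G$-invariant, Proposition~\ref{prop:hilfe}(b) then applies and gives that $A=\overline{Gx}$ is uniquely ergodic. As $x\in X$ was arbitrary, $(X,G)$ is pointwise uniquely ergodic.

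For part~(b), I would first invoke Proposition~\ref{prop:hilfe}(a) applied to the two transitive sets $A_\mu$ and $A_\nu$: it provides the dichotomy that either $h(A_\mu)=h(A_\nu)$ or $h(A_\mu)\cap h(A_\nu)=\emptyset$, so it suffices to exclude the first alternative. Assume, for a contradiction, $h(A_\mu)=h(A_\nu)=:B$. By Proposition~\ref{prop:hilfe}(a) the set $B$ is a minimal subset of $Y$, and since $(Y,G)$ is equicontinuous, $B$ carries a \emph{unique} $G$-invariant measure. Now $h(\mu)$ and $h(\nu)$ are $G$-invariant probability measures, and because $A_\mu$ is closed (hence compact) with $\mu(A_\mu)=1$, one has $A_\mu\ssq h^{-1}(B)$ and therefore $h(\mu)(B)=1$; likewise $h(\nu)(B)=1$. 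By unique ergodicity of $B$ this forces $h(\mu)=h(\nu)$. On the other hand $\mu$ and $\nu$ are distinct ergodic measures on $X$, so Proposition~\ref{prop: unitarity implies distant ergodic components} gives $h(\mu)\neq h(\nu)$ --- a contradiction. Hence $h(A_\mu)\cap h(A_\nu)=\emptyset$.

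I do not anticipate a genuine obstacle; the only point requiring a word of care is the remark that a full-measure (compact) set is carried to a full-measure set by the push-forward, which is what legitimizes treating $h(\mu)$ as a measure \emph{on} $B$ and invoking the unique ergodicity of $B$. Everything else is a direct citation of the preceding propositions. (If desired, part~(b) can be streamlined further by first using part~(a) to note that $A_\mu$ and $A_\nu$ are themselves uniquely ergodic, so that $\mu$ and $\nu$ are their respective unique invariant measures, but this is not needed for the argument above.)
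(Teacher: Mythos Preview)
Your proof is correct and for part~(a) coincides verbatim with the paper's argument. For part~(b) both you and the paper start from the dichotomy in Proposition~\ref{prop:hilfe}(a), but then diverge slightly: you invoke directly that the minimal equicontinuous set $B$ is uniquely ergodic to force $h(\mu)=h(\nu)$ and then appeal to Proposition~\ref{prop: unitarity implies distant ergodic components}, whereas the paper instead looks at the closed invariant set $C\=h^{-1}(B)$, observes it contains both $A_\mu$ and $A_\nu$ (hence is not uniquely ergodic), and derives a contradiction from Proposition~\ref{prop:hilfe}(b) (since $h(C)=B$ is minimal). The two arguments use exactly the same ingredients---Proposition~\ref{prop:hilfe}(b) itself is proved via unique ergodicity of minimal equicontinuous sets together with Proposition~\ref{prop: unitarity implies distant ergodic components}---so your version simply unpacks that proposition rather than citing it as a black box; the paper's route is marginally more modular, yours marginally more transparent.
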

\begin{proof}

(a) Clearly $\overline{Gx}$ is a transitive subset of $X$ for any $x\in X$.
The statement follows from (a) and (b) of the previous proposition.

\smallskip

(b) By (a) of the previous proposition, we either have $h(A_\mu) =
h(A_\nu)$ or $h(A_\mu) \cap h(A_\nu) = \emptyset$. So, it remains to
show that $h(A_\mu) = h(A_\nu)$ is not possible.
To that end, assume the
contrary. Then, $C:=h^{-1}(h(A_\mu)) = h^{-1}(h(A_\nu))$ is a closed
invariant subset of $X$ that contains $A_\mu$ and $A_\nu$. Hence, it
is not uniquely ergodic.
This contradicts the previous proposition.
\end{proof}

As a consequence of the preceding theorem we can decompose
topo-isomorphic extensions of equicontinuous systems into uniquely
ergodic components.
Let us introduce the following notation: whenever $(X,G)$ is a dynamical
system and $\mu$ an ergodic measure, $X_\mu$ denotes the set of all $x \in X$
whose orbit closure $\overline{Gx}$ supports $\mu$ and no
other invariant measure.
In other words, $X_\mu$ comprises the set of all points which are
$\mu$-generic with respect to each Følner sequence.

\begin{corollary}\label{coro:decomposition}
Let $(X,G)$ be a topo-isomorphic extension of an
equicontinuous system $(Y,G)$.
Then the sets $X_\mu$ partition $X$, that is $X=\bigsqcup_\mu X_\mu$,
where $\mu$ runs over all  ergodic measures on $X$.
Further, each $X_\mu$ is the
preimage of a minimal subset of $Y$
and any such preimage
coincides with an $X_\mu$.
\end{corollary}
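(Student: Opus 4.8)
The plan is to read off everything from Theorem~\ref{thm: decomposition theorem} together with Proposition~\ref{prop:hilfe}. First I would note that the sets $X_\mu$ are pairwise disjoint and cover $X$: disjointness is immediate from the definition (a point lying in both $X_{\mu}$ and $X_\nu$ would have an orbit closure supporting both $\mu$ and $\nu$, forcing $\mu=\nu$), and the covering follows from Theorem~\ref{thm: decomposition theorem}(a), which says that $(X,G)$ is pointwise uniquely ergodic, so every $x\in X$ lies in $X_\mu$ where $\mu$ is the unique ergodic measure supported on $\overline{Gx}$. This already gives $X=\bigsqcup_\mu X_\mu$.

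The second assertion is that each $X_\mu$ is the preimage under $h$ of a minimal subset of $Y$. Fix an ergodic $\mu$ and let $x\in X_\mu$, so $A:=\overline{Gx}$ is uniquely ergodic (with unique invariant measure $\mu$) and, being an orbit closure, transitive. By Proposition~\ref{prop:hilfe}(a), $M:=h(A)$ is a minimal subset of $Y$. I claim $X_\mu=h^{-1}(M)$. For the inclusion $X_\mu\subseteq h^{-1}(M)$: if $x'\in X_\mu$ then $\overline{Gx'}$ also supports $\mu$ (and only $\mu$), so $\overline{Gx}\cap\overline{Gx'}\neq\emptyset$ (both contain the support of $\mu$), and since both are transitive, Proposition~\ref{prop:hilfe}(a) forces $h(\overline{Gx'})=h(\overline{Gx})=M$; in particular $h(x')\in M$. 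For the reverse inclusion $h^{-1}(M)\subseteq X_\mu$: the set $C:=h^{-1}(M)$ is a closed $G$-invariant set with $h(C)=M$ minimal, so by Proposition~\ref{prop:hilfe}(b), $C$ is uniquely ergodic; its unique invariant measure must be $\mu$ since $C\supseteq A$ and $\mu$ is invariant on $A$ (hence on $C$). Thus every point of $C$ has orbit closure inside $C$ supporting only $\mu$, i.e.\ lies in $X_\mu$. This shows $X_\mu=h^{-1}(M)$ with $M$ minimal.

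Finally, for the converse direction --- that any preimage $h^{-1}(M)$ of a minimal $M\subseteq Y$ coincides with some $X_\mu$ --- I would again invoke Proposition~\ref{prop:hilfe}(b): $C:=h^{-1}(M)$ is closed and $G$-invariant with $h(C)=M$ minimal (note $h$ is surjective, so $h(h^{-1}(M))=M$), hence $C$ is uniquely ergodic, say with unique invariant measure $\mu$. As $C$ is a union of orbit closures each carrying an invariant measure of $C$, namely $\mu$, every $x\in C$ satisfies $\overline{Gx}\subseteq C$ supporting only $\mu$, so $C\subseteq X_\mu$; conversely $X_\mu\subseteq h^{-1}(M)$ by the argument in the previous paragraph applied to any $x\in C$ (whose image orbit closure equals $M$ by minimality). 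Hence $C=X_\mu$.

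I do not anticipate a genuine obstacle here --- the statement is essentially a bookkeeping consequence of the already-established Proposition~\ref{prop:hilfe} and Theorem~\ref{thm: decomposition theorem}. The one point requiring a little care is making sure the two inclusions $X_\mu\subseteq h^{-1}(M)$ and $h^{-1}(M)\subseteq X_\mu$ are argued symmetrically and that $\mu$ is correctly identified as the unique invariant measure of $h^{-1}(M)$ (using that $h^{-1}(M)$ contains a transitive set on which $\mu$ lives, together with unique ergodicity of $h^{-1}(M)$ from Proposition~\ref{prop:hilfe}(b)). Everything else is immediate.
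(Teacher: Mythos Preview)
Your proof is correct and uses the same ingredients as the paper's argument, namely Proposition~\ref{prop:hilfe} and Theorem~\ref{thm: decomposition theorem}. The only organizational difference is that the paper invokes Lemma~\ref{lem: equicont partition} (the minimal sets partition $Y$) to reduce both directions of the second assertion to the single claim that every $h^{-1}(M)$ with $M$ minimal equals some $X_\mu$, whereas you establish $X_\mu = h^{-1}(M)$ and $h^{-1}(M) = X_\mu$ as two separate inclusions; the substance is identical.
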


\begin{proof}
According to the previous theorem, $(X,G)$ is pointwise uniquely ergodic which
immediately gives that the sets $X_\mu$ partition $X$.

For the second part, Lemma~\ref{lem: equicont partition} yields that it suffices
to show that to each minimal set $M\ssq Y$ there is a unique ergodic
measure $\mu$ on $X$ such that $h^{-1}(M)=X_\mu$, with $h$ the factor map from $X$ to $Y$.
Proposition~\ref{prop:hilfe}~(b) yields that there is a unique ergodic measure
$\mu$ on $X$ with $h^{-1}(M)\ssq X_\mu$.
Clearly, for any $x\in X$ whose orbit closure supports $\mu$,
we must have $\overline{Gx}\cap h^{-1}(M)\neq \emptyset$.
Now, due to Proposition~\ref{prop:hilfe}~(a), $h(\overline{Gx})$ is minimal which necessarily yields $x\in h^{-1}(M)$.
\end{proof}

\begin{corollary}
    Suppose $(X,d)$ is a compact, connected metric space and $(X,G)$ is mean equicontinuous.
    Then $(X,G)$ has either a unique ergodic measure (minimal set) or uncountably many
    ergodic measures (minimal sets).
\end{corollary}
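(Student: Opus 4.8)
The plan is to push the statement down to the maximal equicontinuous factor $(\mef,G)$, where it becomes a partition statement about a compact connected metric space, and then to invoke a classical theorem of Sierpi\'nski.

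First I would establish a dictionary relating the three counts in play. By Theorem~\ref{thm:structural result mean equicontinuous systems}, $(X,G)$ is a topo-isomorphic extension of $(\mef,G)$ via the factor map $\pi$, so Corollary~\ref{coro:decomposition} gives $X=\bigsqcup_\mu X_\mu$, where $\mu$ ranges over the ergodic measures of $(X,G)$, each $X_\mu$ equals $\pi^{-1}(M_\mu)$ for a minimal set $M_\mu\ssq\mef$, and every $\pi$-preimage of a minimal subset of $\mef$ occurs in this way. Since $(X,G)$ is pointwise uniquely ergodic (Theorem~\ref{thm: decomposition theorem}(a)), for $x\in X_\mu$ the orbit closure $\overline{Gx}$ is uniquely ergodic and, by definition of $X_\mu$, supports $\mu$; hence $\mu$ is the only invariant measure on $\overline{Gx}$. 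In particular, if $A\ssq X_\mu$ is minimal and $x\in A$, then $A=\overline{Gx}$ has $\mu$ as its unique invariant measure, so $A=\supp\mu$. Thus $\supp\mu$ is the unique minimal subset of $X_\mu$, the minimal subsets of $X$ are exactly the sets $\supp\mu$ ($\mu$ ergodic), and $\mu\mapsto\supp\mu$ and $A\mapsto\pi(A)$ are bijections between the ergodic measures of $(X,G)$, the minimal subsets of $X$, and the minimal subsets of $\mef$, respectively. It therefore suffices to show that $\mef$ has either one or uncountably many minimal subsets.

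Next I would note that $\mef$, as a continuous image of the connected compact metric space $X$ under $\pi$, is itself a compact connected metric space, i.e.\ a metric continuum (if $\mef$ is a single point the claim is trivial). Since $(\mef,G)$ is equicontinuous, Lemma~\ref{lem: equicont partition} shows that the orbit closures in $\mef$ are minimal, and, being pairwise disjoint and covering $\mef$, the minimal subsets of $\mef$ form a partition of $\mef$ into pairwise disjoint non-empty closed sets. By Sierpi\'nski's theorem a metric continuum cannot be decomposed into countably many, but more than one, pairwise disjoint non-empty closed subsets; hence this partition has a single element or uncountably many, and by the dictionary above the same dichotomy holds for the minimal subsets and ergodic measures of $(X,G)$.

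The only step that is more than bookkeeping is the appeal to Sierpi\'nski's partition theorem, and I do not expect the rest to cause trouble. If one prefers to avoid quoting it, there is a self-contained route: the enveloping semigroup of an equicontinuous system is a compact topological group $K$ acting on $\mef$ with $\overline{Gy}=Ky$ for all $y$ (see \cite{Auslander1988}), so the minimal sets of $\mef$ are the fibres of the open quotient map $q\colon\mef\to K\backslash\mef$ onto a compact Hausdorff space; were this quotient countable, it would contain an isolated point by the Baire category theorem, and the $q$-preimage of such a point would be a proper non-empty clopen subset of the connected space $\mef$ --- a contradiction.
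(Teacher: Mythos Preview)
Your proof is correct and follows essentially the same approach as the paper: reduce the count to the minimal subsets of the maximal equicontinuous factor via Corollary~\ref{coro:decomposition}, use Lemma~\ref{lem: equicont partition} to see these partition the connected space $\mef$, and apply Sierpi\'nski's theorem. Your additional enveloping-semigroup argument avoiding Sierpi\'nski is a nice self-contained alternative that the paper does not include.
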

\begin{proof}
    Recall that the support of an ergodic measure is always transitive (due to the generic points) and that every
    minimal set supports an ergodic measure.
    Due to the pointwise unique ergodicity of mean equicontinuous systems (see Theorem \ref{thm: mean equicontinuity implies topo-isomorphy}
    and Theorem \ref{thm: decomposition theorem} (a)),
    this implies that there is a one-to-one correspondence between minimal sets and ergodic measures.
    Hence, it suffices to show the statement for ergodic measures.

    Due to Corollary~\ref{coro:decomposition}, we have a bijection
    between the ergodic measures of $(X,G)$ and the minimal sets of its maximal
    equicontinuous factor $(\mef,G)$.
    By Lemma~\ref{lem: equicont partition}, $\mef$ allows for a
    partition by minimal sets which are clearly compact and
    pairwise disjoint.
    Since $\mef$ is connected (as the continuous image of $X$),
    a classical result by Sierpinski \cite{Sierpinski1918} yields that such a partition
    consists of either one or uncountably many partition elements.
\end{proof}


\subsection{Topo-isomorphy implies mean equicontinuity}\label{subsec:topo-isomorphy}

Now, we show one direction of our main structural result.
To that end, we make use of Proposition~\ref{prop:topo-isomorphy-via-unitary-operators}
and rephrase the assertion that topo-isomorphy implies mean equicontinuity as follows.

\begin{theorem}\label{thm:topo-isomorphy implies mean equicontinuity}
    Let $(X,G)$ be a dynamical system and $(\mef',G)$ an equicontinuous factor
    with factor map $\pi'$ such that
    for every $G$-invariant measure $\mu$  the operator
    \[
        U_\mu:L_2(\mef',\pi(\mu))\to L_2(X,\mu): f\mapsto f\circ\pi'
    \]
    is unitary.
    Then $(X,G)$ is mean equicontinuous and $(\mef',G)$ is the associated MEF.
\end{theorem}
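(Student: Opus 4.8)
The goal is to show: if $(\mef',G)$ is an equicontinuous factor of $(X,G)$ via $\pi'$ such that $U_\mu$ is unitary for every $G$-invariant $\mu$, then $(X,G)$ is (Weyl-) mean equicontinuous with $(\mef',G)$ as its MEF. The strategy is to estimate the pseudometric $D$ on $X$ by pulling back the metric from the equicontinuous factor $\mef'$, using the unitarity of $U_\mu$ to turn the $L^2$-statement "$\mathbf 1_A\circ\pi'$ spans all of $L^2(X,\mu)$" into a pointwise statement about Birkhoff averages of $d$.

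\smallskip

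\textbf{Step 1: reduce to the factor.} Fix a metric $\hat d$ on $\mef'$ for which $G$ acts by isometries (possible since $\mef'$ is equicontinuous). Given $\eps>0$, equicontinuity of $(X,G)$ relative to $\pi'$ is \emph{not} assumed, so instead I would argue: it suffices to find $\delta>0$ such that $\hat d(\pi'x,\pi'y)<\delta$ implies $D(x,y)<\eps$, \emph{and} that $\pi'x=\pi'y$ already forces $D(x,y)=0$; then continuity of $\pi'$ gives a $\delta_\eps>0$ with $d(x,y)<\delta_\eps\Rightarrow\hat d(\pi'x,\pi'y)<\delta\Rightarrow D(x,y)<\eps$. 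The MEF claim will follow at the end from the universal property recalled in the excerpt (an equicontinuous factor on which the relation $\pi'x=\pi'y$ implies $\inf_g d(gx,gy)=0$ is conjugate to the MEF), once we know $\pi'x=\pi'y\Rightarrow D(x,y)=0$, since $D\ge$ the corresponding $\inf$.

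\smallskip

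\textbf{Step 2: the key estimate via unitarity.} This is the heart of the matter. Take $x,y\in X$. Consider the orbit-closure measures: by Theorem~\ref{thm: decomposition theorem}(a) applied to $(\mef',G)$ — wait, that requires topo-isomorphy, which is exactly what we are proving one direction of; so instead I would work directly with an arbitrary ergodic $\mu$ and the generic-points machinery of Theorem~\ref{thm: existence of generic points}. For a fixed Følner sequence $\Fol$ and a fixed pair $(x,y)$, I want to bound $D_\Fol(x,y)=\varlimsup_n \frac1{|F_n|}\int_{F_n} d(tx,ty)\,dm(t)$. The plan: approximate $d(\cdot,\cdot)$ on $X\times X$ in $L^2(\mu\otimes\mu)$-type norm by functions of the form $\sum_i c_i (g_i\circ\pi')(x)\overline{(g_j\circ\pi')(y)}$ — more precisely, since $U_\mu$ is unitary, $\{f\circ\pi' : f\in L^2(\mef',\pi'(\mu))\}$ is \emph{all} of $L^2(X,\mu)$, so any $L^2(X,\mu)$ function, in particular $x\mapsto d(x,y_0)$ for suitable reference points, is well-approximated by $\pi'$-measurable functions. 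The cleanest route is: fix $\eps$, use unitarity to write the diagonal-type function $x\mapsto \big($ something controlling $d\big)$ as a limit of $\pi'$-pullbacks, then invoke that $\hat d$ is an honest continuous function on $\mef'$ and $G$ acts isometrically there, so its Birkhoff averages are controlled uniformly. The technically correct packaging: show that the function $\Psi(x,y):=D(x,y)$ satisfies $\Psi(x,y)\le$ a continuous function of $(\pi'x,\pi'y)$ vanishing on the diagonal, by testing against $\mu\otimes\mu$ for all ergodic $\mu$ and using Lemma~\ref{lem: ergodic representation} to rule out a positive-measure set where the bound fails.

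\smallskip

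\textbf{Step 3: assembling and the obstacle.} I expect \textbf{Step 2} to be the main obstacle: the subtlety is that $D$ is a \emph{supremum over all Følner sequences} and is defined pointwise (not $\mu$-a.e.), whereas unitarity of $U_\mu$ is an a.e./$L^2$ statement for each $\mu$ separately. Bridging this gap requires (i) an upper-semicontinuity or measurability argument for $(x,y)\mapsto D(x,y)$, (ii) using that for \emph{every} ergodic $\mu$ on $X\times X$ the pushforward under $\pi'\times\pi'$ is again ergodic on $\mef'\times\mef'$ and $d$ agrees $\mu$-a.e. with a $\pi'$-pullback, hence its Birkhoff averages along any Følner sequence converge (by the Mean Ergodic Theorem plus Theorem~\ref{thm:characterisation of unique ergodicity} on the equicontinuous, hence in particular on each minimal/uniquely ergodic, component of $\mef'\times\mef'$) to $\int d\,d\mu$, which depends continuously and $\to 0$ as $\pi'x,\pi'y$ coalesce, and (iii) a Fubini/ergodic-decomposition argument to upgrade "a.e." to "everywhere" by noting the exceptional set would have positive measure for some ergodic measure, contradicting (ii). Once the estimate $D(x,y)\le \omega(\hat d(\pi'x,\pi'y))$ with $\omega(0)=0$, $\omega$ continuous, is in hand, Step 1 finishes mean equicontinuity, and the MEF identification follows from the universal property as noted.
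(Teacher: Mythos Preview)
Your proposal contains a genuine gap, and it stems from a misreading in Step~2. You write that Theorem~\ref{thm: decomposition theorem}(a) ``requires topo-isomorphy, which is exactly what we are proving one direction of,'' and you therefore abandon it. But the hypothesis of the present theorem \emph{is} topo-isomorphy: by Proposition~\ref{prop:topo-isomorphy-via-unitary-operators}, unitarity of $U_\mu$ for every invariant $\mu$ is precisely the statement that $\pi'$ is a topo-isomorphy. So Theorem~\ref{thm: decomposition theorem} is fully available, and it hands you pointwise unique ergodicity of $(X,G)$. This is exactly the tool that resolves the obstacle you worry about in Step~3: once every orbit closure $\overline{Gx}$ is uniquely ergodic, Theorem~\ref{thm:characterisation of unique ergodicity} converts the Birkhoff averages appearing in $D$ (or in $D_f$) into integrals against $\mu_x$ at \emph{every} point $x$, for \emph{every} F\o lner sequence, with no exceptional set. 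The a.e.-to-everywhere upgrade you sketch (via Lemma~\ref{lem: ergodic representation} and a contradiction on positive-measure exceptional sets) is both unnecessary and, as you outline it, insufficient: it would only reach points that are generic for some ergodic measure, and you have no argument that every point is such.

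The paper's route, which you should follow, is: (1) reduce via Proposition~\ref{prop:equivalent char mean equicontiunity} to showing $D_f$ is continuous for each $f\in\mc C(X)$; (2) for given $x_1,x_2$, let $\mu_i$ be the unique ergodic measure on $\overline{Gx_i}$ (available by Theorem~\ref{thm: decomposition theorem}(a)); (3) use unitarity of $U_{\mu_i}$ plus density of $\mc C(\mef')$ in $L^2(\mef',\pi'(\mu_i))$ to approximate $f$ in $L^1(X,\mu_i)$ by some $g_i\circ\pi'$ with $g_i\in\mc C(\mef')$; (4) patch $g_1,g_2$ into a single $g\in\mc C(\mef')$ using that $\pi'(\overline{Gx_1})$ and $\pi'(\overline{Gx_2})$ are either equal or disjoint (Theorem~\ref{thm: decomposition theorem}(b)); (5) bound $D_f(x_1,x_2)\le T_1(x_1)+D_{g\circ\pi'}(x_1,x_2)+T_2(x_2)$, where each $T_i(x_i)$ equals $\|f-g_i\circ\pi'\|_{L^1(\mu_i)}$ by unique ergodicity, and $D_{g\circ\pi'}$ is small when $\pi'(x_1),\pi'(x_2)$ are close by equicontinuity of $(\mef',G)$ and uniform continuity of $g$. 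Your Step~1 reduction and the MEF identification at the end are essentially correct.
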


Before we can turn to the proof of Theorem \ref{thm:topo-isomorphy
implies mean equicontinuity}, we need two further ingredients. The first
ingredient  is  another characterization of mean equicontinuity
which makes use of the continuous functions on $X$. For that purpose
we define the pseudometric $D_f$ associated to a function
$f  \in \mc C(X)$ by
\[
    D_f(x,y)\=\sup\left\{
    \left. \varlimsup\limits_{n\to\infty}\frac{1}{\abs{F_n}}\int_{F_n}\abs{f(tx)-f(ty)}dm(t)
    \;\right|(F_n)_{n\in\N}\textnormal{ a Følner sequence}\right\}.
\]

The following statement is well known, see
\cite[Proposition 1]{DownarowiczIwanik1988} and \cite[Theorem
2.14]{Garcia-RamosMarcus2019}. We include a proof for the
convenience of the reader.

\begin{proposition}\label{prop:equivalent char mean equicontiunity}
    The following assertions are equivalent:
    \begin{itemize}
        \item[(i)] $(X,G)$ is mean equicontinuous.
        \item[(ii)] For every $f\in \mc C(X)$ the pseudometric $D_f$
            is continuous.
    \end{itemize}
Moreover, if one of the equivalent assertions  (i) and (ii) holds, then $D(x,y) =0$
if and only if $D_f (x,y) =0$ for all continuous $f$.
\end{proposition}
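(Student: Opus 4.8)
The plan is to prove the equivalence $(i)\Leftrightarrow(ii)$ by comparing the pseudometric $D$ with the family $\{D_f : f\in\mc C(X)\}$, and then read off the final claim about $D(x,y)=0$. First I would record the one-line estimate that relates $D_f$ to $D$: since $f$ is uniformly continuous on the compact space $X$, for every $\eps>0$ there is $\delta>0$ with $d(u,v)<\delta\Rightarrow|f(u)-f(v)|<\eps$; plugging $u=tx$, $v=ty$ and averaging over $F_n$ shows that whenever $D(x,y)<\delta$ one has $D_f(x,y)\le\eps$ (the averaged integrand is bounded by $\eps$ on the ``good'' part of $F_n$ and by $2\|f\|_\infty$ on the part where $d(tx,ty)\ge\delta$, whose relative Haar measure is controlled by $D(x,y)/\delta$ in the $\limsup$). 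Hence $D$ continuous at the diagonal (i.e.\ mean equicontinuity) implies each $D_f$ is continuous at the diagonal; since $D_f$ is $G$-invariant-free but still a genuine pseudometric, continuity at the diagonal upgrades to continuity everywhere by the triangle inequality, giving $(i)\Rightarrow(ii)$.

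For the converse $(ii)\Rightarrow(i)$, the key point is that the topology of $X$ is metrized by finitely many — indeed countably many — continuous functions, and more quantitatively that $d$ itself can be controlled by continuous functions. Concretely, fix a countable dense set $\{z_k\}\subseteq X$ and let $f_k(x)=d(x,z_k)$; then $|f_k(tx)-f_k(ty)|\le d(tx,ty)$ always, and conversely $d(u,v)=\sup_k|f_k(u)-f_k(v)|$. This gives the pointwise bound $D_{f_k}(x,y)\le D(x,y)$ for every $k$, and I claim the reverse-type estimate $D(x,y)\le \sup_k D_{f_k}(x,y)$ holds as well — or at least that smallness of all $D_{f_k}(x,y)$ forces smallness of $D(x,y)$. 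To see this, note that for each fixed $t$, $d(tx,ty)=\sup_k|f_k(tx)-f_k(ty)|$, but the supremum over $k$ does not commute with the average over $F_n$; the standard fix is to observe that for any finite $K$, $d(tx,ty)\le \max_{k\le K}|f_k(tx)-f_k(ty)| + \rho_K$ where $\rho_K$ is a uniform error term tending to $0$ as $K\to\infty$ (using compactness: a finite $\rho_K$-net), and then $\frac1{|F_n|}\int_{F_n}d(tx,ty)\,dm \le \sum_{k\le K}\frac1{|F_n|}\int_{F_n}|f_k(tx)-f_k(ty)|\,dm + \rho_K$, so taking $\limsup_n$, then $\sup_{\Fol}$, yields $D(x,y)\le \sum_{k\le K}D_{f_k}(x,y)+\rho_K$. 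Now if all $D_{f_k}$ are continuous pseudometrics vanishing on the diagonal, then given $\eps$ choose $K$ with $\rho_K<\eps/2$ and $\delta$ with $d(x,y)<\delta\Rightarrow\sum_{k\le K}D_{f_k}(x,y)<\eps/2$; this gives mean equicontinuity.

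The main obstacle is exactly this exchange of $\sup_k$ and the average, i.e.\ passing from the one-function pseudometrics $D_f$ back to $D$, which needs the compactness-based truncation argument above rather than a naive interchange of supremum and integral; the subtlety is compounded by the extra outer supremum over Følner sequences in the definitions of $D$ and $D_f$, but since the truncation inequality $d(tx,ty)\le\max_{k\le K}|f_k(tx)-f_k(ty)|+\rho_K$ is pointwise in $t$, it survives both the $\limsup_n$ and the $\sup_{\Fol}$ without difficulty. Finally, for the last sentence of the proposition: the chain of inequalities just established shows that, under (i)/(ii), $D(x,y)=0$ forces $D_{f_k}(x,y)=0$ for all $k$ (immediate from $D_{f_k}\le D$), hence $D_f(x,y)=0$ for every continuous $f$ by approximating $f$ uniformly by combinations of the $f_k$ — or simply by the uniform-continuity estimate of the first paragraph; conversely, if $D_f(x,y)=0$ for all continuous $f$, then in particular $D_{f_k}(x,y)=0$ for all $k$, and the truncation inequality $D(x,y)\le\sum_{k\le K}D_{f_k}(x,y)+\rho_K=\rho_K$ for all $K$ forces $D(x,y)=0$.
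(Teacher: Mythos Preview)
Your argument is correct, but the paper's proof takes a different and considerably shorter route. The paper's key observation is that both mean equicontinuity of $(X,G)$ and the topology generated by $D$ are \emph{independent of the particular metric} $d$ (as long as $d$ generates the topology of $X$). This lets one freely change metrics: for $(i)\Rightarrow(ii)$, replacing $d$ by the equivalent metric $d'(x,y)=d(x,y)+|f(x)-f(y)|$ instantly gives $|f(x)-f(y)|\le d'(x,y)$ and hence $D_f\le D$, so continuity of $D_f$ follows from continuity of $D$. For $(ii)\Rightarrow(i)$, one chooses a separating sequence $(f_n)$ with $\|f_n\|_\infty\le1$ and replaces $d$ by the equivalent metric $\sum_n c_n|f_n(x)-f_n(y)|$ (with $\sum c_n<\infty$), which immediately yields $D\le\sum_n c_n D_{f_n}$; the right-hand side is continuous since each $D_{f_n}$ is and the series converges uniformly.

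Your approach, by contrast, keeps the original metric throughout and compensates with a Markov-type splitting (for $(i)\Rightarrow(ii)$) and a finite-net truncation argument (for $(ii)\Rightarrow(i)$). Both methods are sound; the metric-switching trick buys brevity and makes the inequalities $D_f\le D$ and $D\le\widetilde D$ one-liners, whereas your approach is more explicit and does not rely on the invariance-of-mean-equicontinuity-under-change-of-metric lemma. The final claim about $D(x,y)=0\Leftrightarrow D_f(x,y)=0$ for all $f$ falls out of the established inequalities in both proofs, just as you indicate.
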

\begin{proof} It is not hard to see that
the topology generated on $X$ by $D$ as well as
the mean equicontinuity of $(X,G)$ is independent of the
particular choice of the metric $d$ (provided $d$ generates the original topology on $X$).
This will be used throughout the proof.

    \smallskip

    (i)$\Rightarrow$(ii):  Observe that $d' (x,y)\=d(x,y) + |f(x) - f(y)|$ is a
    metric equivalent to $d$.
    We can hence assume w.l.o.g.\ that $|f(x) - f(y)| \leq d(x,y)$.
    This implies $D_f \leq D$.
    As $D$ is continuous, this implies (ii).

    \smallskip

    (ii)$\Rightarrow$(i): Choose a sequence $(f_n)_{n\in\N}$ of continuous functions
    on $X$ which separate points and satisfy $\|f_n\|_\infty \leq 1$ for
    all $n\in\N$.
    Then for any  $c_n>0$ with $\sum_n c_n < \infty$ we have that
    \begin{align}\label{eq:equivalent metric c_n}
        \sum_{n} c_n\abs{f_n(x)-f_n (y)}
    \end{align}
    defines a metric equivalent to $d$.
    We can hence assume w.l.o.g.\ that $d$ is given by \eqref{eq:equivalent metric c_n}.
    Now, clearly
    $D \leq \sum c_n D_{f_n}\eqqcolon\widetilde{D},$
    where $\widetilde{D}$ is continuous by (ii) and the summability of
    $(c_n)_{n\in\N}$.

    \smallskip

    The last statement has been shown along the proof.
\end{proof}

\begin{remark} The above shows that the topology on $X$ generated by $D$
agrees with the topology generated by the collection of $D_f$'s with
$f\in\mc C(X)$.
\end{remark}

The other ingredient needed for the proof of Theorem \ref{thm:topo-isomorphy
implies mean equicontinuity}  --and in some sense the main insight of the present
section-- is the following lemma.

\begin{lemma}\label{lem:insight}
    Let $(X,G)$ be a dynamical system and $(\mef',G)$ an equicontinuous
    factor with factor map $\pi'$.
    Suppose that for every $G$-invariant measure $\mu$ the operator
    \[
        U_\mu:L_2(\mef',\pi'(\mu))\to L_2(X,\mu): f\mapsto f\circ\pi'
    \]
    is unitary.
    Then, for any $f \in \mc C(X)$ and any $\eps>0$ we have $D_f (x_1,x_2)
    <\eps$ provided $\pi'(x_1)$ and $\pi' (x_2)$ are sufficiently close.
\end{lemma}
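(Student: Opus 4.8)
The plan is to reduce the claim about $D_f$ to a uniform-ergodicity statement about Birkhoff averages of a continuous function on a suitable product system, and then feed that back into the hypothesis that each $U_\mu$ is unitary. The key point is that the quantity controlling $D_f(x_1,x_2)$ is an average of the continuous function $\Phi(x_1,x_2) := |f(x_1) - f(x_2)|$ along the diagonal $G$-action on $X \times X$; conversely, unitarity of $U_\mu$ says that $f \circ \pi'$ generates, together with constants, essentially all of $L_2(X,\mu)$ in a way that forces $f$ to be $D$-close to a $\pi'$-measurable function, and $\pi'$-measurable functions are (via the equicontinuous, hence isometric-after-remetrization, structure of $\mef'$) genuinely continuous in the $\mef'$-coordinate. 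The hard part will be bootstrapping from the measure-by-measure unitarity to a \emph{uniform} bound valid whenever $\pi'(x_1)$ and $\pi'(x_2)$ are close, rather than almost everywhere.

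Concretely, first I would observe that $f \circ \pi' \in L_2(\mef',\pi'(\mu))$ pulls back under $U_\mu$ to $f \in L_2(X,\mu)$, and that unitarity of $U_\mu$ for \emph{every} $G$-invariant $\mu$ forces, for each $\eps > 0$, the function $f$ to be uniformly approximable in a strong sense by a bounded measurable function of the form $\tilde f \circ \pi'$. One clean way to make this rigorous: consider the compact space $\mef'$ with its equicontinuous (hence, after remetrization à la the discussion following Lemma~\ref{lem: equicont partition}, isometric) structure. Since $\mef'$ is equicontinuous, for any bounded Borel $\tilde f$ on $\mef'$ the associated pseudometric $D_{\tilde f \circ \pi'}$ on $X$ satisfies $D_{\tilde f \circ \pi'}(x_1,x_2) = 0$ whenever $\pi'(x_1) = \pi'(x_2)$, and more quantitatively is small when $\pi'(x_1),\pi'(x_2)$ are close and $\tilde f$ is continuous; for general bounded Borel $\tilde f$ one approximates in $L_1$ of the (unique, by Lemma~\ref{lem: equicont partition} plus unique ergodicity of minimal equicontinuous systems) relevant invariant measures on $\mef'$. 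So the core reduction is: it suffices to show that for every $\eps>0$ there is a continuous $g$ on $\mef'$ with $D_{f - g\circ\pi'} < \eps/2$ uniformly on $X$.

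For that last reduction I would argue as follows. By Theorem~\ref{thm: decomposition theorem}(a) (available here, since the unitarity hypothesis is exactly the condition of Proposition~\ref{prop:topo-isomorphy-via-unitary-operators} making $(X,G)$ a topo-isomorphic extension of $(\mef',G)$), $(X,G)$ is pointwise uniquely ergodic, so for each $x$ the average $\overline A(\Fol, \Phi)(x,x)$ with $\Phi(x_1,x_2)=|f(x_1)-f(x_2)|$ is governed by the unique ergodic measure $\mu_x$ on $\overline{Gx}$; in particular $D_f(x_1,x_2)$ equals $\int |f - f|\, d(\text{coupling of }\mu_{x_1},\mu_{x_2})$-type expressions. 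Unitarity of $U_{\mu_{x_1}}$ forces $f$ restricted to $\overline{Gx_1}$ to agree $\mu_{x_1}$-a.e. with a function of $\pi'$; the continuity of the correspondence $M \mapsto (\text{unique ergodic measure on } h^{-1}(M))$ from Corollary~\ref{coro:decomposition}, together with equicontinuity of $\mef'$, then gives that this $\pi'$-measurable representative varies continuously enough that $D_f(x_1,x_2) \to 0$ as $\pi'(x_1) \to \pi'(x_2)$. I expect the main obstacle to be precisely this continuity/uniformity step — passing from the a.e. identification of $f$ with a $\pi'$-measurable function (valid separately for each ergodic $\mu$) to a bound that is uniform in the base point and expressed purely through closeness in $\mef'$; the equicontinuity of $\mef'$ and the partition structure of Corollary~\ref{coro:decomposition} are the tools that should make this go through, but the bookkeeping of "how close in $\mef'$" in terms of "$\eps$" will require care.
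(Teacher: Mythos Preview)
Your overall architecture --- approximate $f$ by $g\circ\pi'$ with $g$ continuous on $\mef'$, then use equicontinuity of $(\mef',G)$ to control $D_{g\circ\pi'}$ --- matches the paper. But the step you yourself flag as the obstacle is a genuine gap, and the two devices you propose for closing it do not work.

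First, seeking a \emph{single} continuous $g$ with $D_{f-g\circ\pi'}<\eps/2$ uniformly on $X$ is too strong: the $L^2$-approximation of $f$ by a $\pi'$-measurable function is only given measure-by-measure, and with possibly uncountably many ergodic measures there is no mechanism here to patch these into one global $g$. Second, the ``continuity of $M\mapsto(\text{unique ergodic measure on }h^{-1}(M))$'' you invoke is not contained in Corollary~\ref{coro:decomposition} (which is purely a set-theoretic partition statement); continuity of $x\mapsto\mu_x$ is in fact \emph{equivalent} to mean equicontinuity (Theorem~\ref{thm:equivalence criterion of mean equi. involving pointwise u.e}), so using it here would be circular.

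The paper's resolution avoids both issues by never asking for uniformity. For fixed $x_1,x_2$, let $\mu_i$ be the unique ergodic measure on $\overline{Gx_i}$ (Theorem~\ref{thm: decomposition theorem}(a)). Unitarity of $U_{\mu_i}$ plus density of $\mc C(\mef')$ gives $g_i\in\mc C(\mef')$ with $\|f-g_i\circ\pi'\|_{L_1(\mu_i)}\leq\eps/3$. If $\mu_1\neq\mu_2$, then $\pi'(\overline{Gx_1})$ and $\pi'(\overline{Gx_2})$ are disjoint compact sets (Theorem~\ref{thm: decomposition theorem}(b)), so Urysohn-type cutoffs $S_1,S_2$ let you glue $g=S_1g_1+S_2g_2\in\mc C(\mef')$. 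Then
\[
D_f(x_1,x_2)\;\leq\; T_1(x_1)+D_{g\circ\pi'}(x_1,x_2)+T_2(x_2),
\]
where $T_i(x_i)=\sup_{\Fol}\varlimsup_n |F_n|^{-1}\int_{F_n}|f(tx_i)-g_i\circ\pi'(tx_i)|\,dm(t)$. By unique ergodicity on $\overline{Gx_i}$ and Theorem~\ref{thm:characterisation of unique ergodicity}, $T_i(x_i)=\|f-g_i\circ\pi'\|_{L_1(\mu_i)}\leq\eps/3$; and $D_{g\circ\pi'}(x_1,x_2)$ is small when $\pi'(x_1),\pi'(x_2)$ are close, by equicontinuity and uniform continuity of $g$. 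The point is that $g$ is allowed to depend on the pair $(x_1,x_2)$: the ``how close'' in $\mef'$ is dictated only by the modulus of continuity of this particular $g$, and that suffices for the lemma as stated.
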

\begin{proof}
    By (a) of  Theorem \ref{thm: decomposition theorem} the orbit closure of
    $x_i$ ($i =1,2$) supports a unique ergodic measure $\mu_i$.
    W.l.o.g.\ we may assume that $\mu_1\neq\mu_2$ (if $\mu_1=\mu_2$, the following
    argument works in an analogous and slightly simplified way).
    By unitarity of the $U_{\mu_i}$'s and denseness of continuous functions
    in $L_2(\mef',\pi'(\mu_i))$, we can find $g_i \in \mc C(\mef')$ with
    \[
        \Abs{f - g_i \circ\pi'}_{L_2(X,\mu_i)}=\Abs{f - U_{\mu_i}g_i}_{L_2(X,\mu_i)}
        \leq\varepsilon/3.
    \]
    By Cauchy-Schwarz, we then obtain
    \[
        \Abs{f - g_i \circ\pi'}_{L_1(X,\mu_i)}\leq\Abs{f - g_i \circ\pi'}_{L_2(X,\mu_i)}
        \leq\varepsilon/3.
    \]
    Set $M_i=\pi'\big(\overline{Gx_i}\big)$.
    Then $M_1$ and $M_2$ are disjoint by Theorem \ref{thm: decomposition theorem} (b)
    since $\mu_1\neq \mu_2$.
    Let $S_i$ ($i=1,2$) be continuous functions
    on $\mef'$ with $\left.S_i\right|_{M_i}=1$ and $\left.S_1\right|_{M_2}=\left.S_2\right|_{M_1}=0$.
    Set $g=S_1 g_1+S_2g_2$.

    Now, for any $t\in G$ we have
    \[
        \abs{f(tx_1) - f(tx_2)}\leq\abs{f(tx_1)-g_1 \circ\pi'(t x_1)}
        +\abs{g\circ\pi'(t x_1)-g\circ\pi'(tx_2)}+\abs{g_2\circ\pi'(t x_2)-f(t x_2)}.
    \]
    Consequently, we obtain
    \[
         D_f(x_1,x_2)\leq T_1 (x_1) + D_{g\circ\pi'} (x_1,x_2) + T_2 (x_2),
    \]
    where
    \[
        T_i(x_i)\=\sup\left\{
        \left.\varlimsup_{n\to\infty}\frac{1}{\abs{F_n}}\int_{F_n}\abs{f(tx_i) - g_i\circ\pi'(t x_i)}dm(t)
        \;\right|(F_n)_{n\in\N}\textnormal{ a Følner sequence}\right\}.
    \]

    We show that all three terms become small for $x_1$ sufficiently close to $x_2$.
    By unique ergodicity on orbit closures and Theorem~\ref{thm:characterisation of unique ergodicity}, we obtain
    \[
        T_i (x_i)=\Abs{f - g_i\circ\pi'}_{L_1(X,\mu_i)}\leq\varepsilon/3.
    \]
    The term $D_{g\circ\pi'}$ can be treated as follows.
    If  $\pi'(x_1)$ is close to $\pi'(x_2)$, we obtain that $t\pi'(x_1)$ is close
    to $t\pi'(x_2)$ for all $t\in G$ (by equicontinuity).
    As $g$ is continuous (and hence uniformly continuous) on $\mef'$, this implies
    that $g\circ\pi'(tx_1) = g(t\pi'(x_1))$ is close to
    $g \circ\pi'(tx_2) = g (t\pi'(x_2))$ for all $t\in G$ and we are done.
\end{proof}

\begin{proof}[Proof of Theorem~\ref{thm:topo-isomorphy implies mean equicontinuity}]
    We first show that $(X,G)$ is mean equicontinuous.
    By Proposition \ref{prop:equivalent char mean equicontiunity}, it suffices to
    show that $D_f$ is continuous for any $f\in \mc C(X)$.
    Let such an $f$ be given and consider an arbitrary $\varepsilon >0$.
    We have to  show that if $x_1, x_2\in X$ are close, then
    $D_f(x_1,x_2)<\eps$. This, however, is clear from Lemma
    \ref{lem:insight} as for $x_1$ close to $x_2$ we clearly have
    $\pi'(x_1)$ close to $\pi'(x_2)$ due to the continuity of $\pi'$.

It remains to show that $(\mef',G)$ is the MEF.
As discussed in Subsection \ref{sec:Notation}, it suffices to show that
 $\inf_{t\in G} d(tx,ty) = 0$ whenever $\pi'(x) = \pi' (y)$. Now,
 $\pi'(x) = \pi'(y)$ implies $D_f (x,y) =0$ for all continuous $f$ on $X$ (by Lemma~\ref{lem:insight})
 and hence $D(x,y) =0$ due to Proposition \ref{prop:equivalent char mean
 equicontiunity}.
 From this and the definition of $D$ we easily find
 $\inf_{t\in G} d(tx,ty) =0$.
\end{proof}

\begin{figure}[h]
    \begin{center}
        \includegraphics[width=0.3\textwidth]{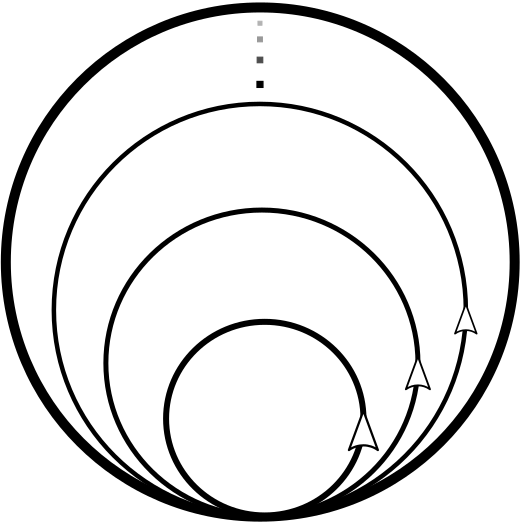}
        \caption{
            A sketch of a $\Z$-action which is not mean equicontinuous but at
            the same time a pointwise uniquely ergodic topo-isomorphic extension
            of its trivial MEF with respect to all ergodic (delta) measures.
            The continuous dynamics are as follows: points on the bold outer circle are fixed.
            Further, there are infinitely many inner circles attached to the south pole
            which accumulate at the outer circle and all points on the inner
            circles get attracted by the south pole.}
        \label{fig:Gegenbeispiel}
    \end{center}
\end{figure}

\begin{remark}
 We defined a topo-isomorphy $h$ to be a topological factor map which is
 an isomorphism $\bmod\, 0$ with respect to $\mu$ and $h(\mu)$ for every
 \emph{invariant} measure $\mu$.
 It is natural to ask whether Theorem~\ref{thm:topo-isomorphy implies mean equicontinuity} still
 remains true if we relax the assumptions on $h$
 by considering $h$ to be a factor map
 which is only an isomorphism $\bmod\, 0$ with respect to $\mu$ and $h(\mu)$ for every
 \emph{ergodic} measure $\mu$.
 In the proof of Lemma~\ref{lem:insight}, the
 topo-isomorphy with respect to every invariant measure was (implicitly) used twice:
 once, to ensure pointwise unique ergodicity and once, to
 ensure that the supports of two distinct ergodic measures have disjoint images under
 $h$ (see also Theorem~\ref{thm: decomposition theorem}).
 In fact, the latter implies the former and
 hence implies mean equicontinuity if we additionally assume $h$ to be a
 topo-isomorphy onto an equicontinuous factor with respect to every ergodic measure.
 However, it is not true that pointwise unique ergodicity
 and topo-isomorphy to an equicontinuous factor with respect
 to ergodic measures only yields that the supports of distinct ergodic measures have distinct images,
 as can be seen in Figure \ref{fig:Gegenbeispiel}.
\end{remark}


\subsection{Mean equicontinuity implies topo-isomorphy}\label{subsec:mean-equi}

In this section we establish that mean equicontinuity implies topo-isomorphy of the
dynamical system to its MEF.
Together with Theorem~\ref{thm:topo-isomorphy implies mean equicontinuity} from the
previous subsection, this proves our main structural result
Theorem \ref{thm:structural result mean equicontinuous systems}.
We first note that $D$ is actually $G$-invariant.

\begin{proposition}[Invariance of $D$]\label{prop:D-is-invariant}
Let $(X,G)$ be a dynamical system.
Then $D$ satisfies $D(tx,ty) =
D(x,y)$ for all $x,y\in X$ and $t\in G$.
\end{proposition}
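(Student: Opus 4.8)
The statement to prove is that the pseudometric $D$ is $G$-invariant: $D(tx,ty)=D(x,y)$ for all $x,y\in X$ and $t\in G$. Since $D(x,y)=\sup_{\Fol} D_{\Fol}(x,y)$ with $D_{\Fol}(x,y)=\varlimsup_n \frac{1}{|F_n|}\int_{F_n} d(sx,sy)\,dm(s)$, the natural strategy is to show that left-translating the pair $(x,y)$ by $t$ amounts to replacing a Følner sequence $\Fol=(F_n)$ by the translated family $(F_n t^{-1})$, and then argue that this reparametrisation neither increases nor decreases the supremum. Concretely, for a fixed Følner sequence $\Fol=(F_n)$ I would write
\[
    \frac{1}{|F_n|}\int_{F_n} d(s\,tx,\,s\,ty)\,dm(s)
    = \frac{1}{|F_n|}\int_{F_n t} d(ux,uy)\,dm(u\,t^{-1}),
\]
obtained by the substitution $u=st$. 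The first obstacle one confronts here is that $G$ need not be unimodular, so $dm(ut^{-1})$ is not simply $dm(u)$ — it carries the modular function $\Delta(t)^{-1}$ (or its reciprocal, depending on conventions). However, the point is that this same constant factor $\Delta(t)^{\pm1}$ also appears when we compute $|F_n t| = \Delta(t)\,|F_n|$, so it cancels in the ratio, leaving
\[
    \frac{1}{|F_n|}\int_{F_n} d(s\,tx,\,s\,ty)\,dm(s)
    = \frac{1}{|F_n t|}\int_{F_n t} d(ux,uy)\,dm(u).
\]
Thus $A_n(\Fol,\,d(t\cdot,t\cdot))(x,y) = A_n(\Fol\cdot t,\, d)(x,y)$ where $\Fol\cdot t=(F_n t)$.

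**Key steps.** The argument then proceeds in three moves. First, verify that if $\Fol=(F_n)$ is a (left) Følner sequence then so is $\Fol\cdot t=(F_n t)$ for every fixed $t\in G$; this follows because $g(F_n t)\triangle(F_n t) = (gF_n\triangle F_n)t$, whose Haar measure is $\Delta(t)\,|gF_n\triangle F_n|$, so the Følner ratio $|g(F_nt)\triangle(F_nt)|/|F_nt| = |gF_n\triangle F_n|/|F_n| \to 0$. Second, combine this with the identity above: taking $\varlimsup$ over $n$ gives $D_{\Fol}(tx,ty) = D_{\Fol\cdot t}(x,y)$ for every Følner sequence $\Fol$. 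Third, take the supremum over $\Fol$: since $\Fol\mapsto\Fol\cdot t$ is a bijection from the set of left Følner sequences to itself (with inverse $\Fol\mapsto\Fol\cdot t^{-1}$), we get
\[
    D(tx,ty) = \sup_{\Fol} D_{\Fol}(tx,ty) = \sup_{\Fol} D_{\Fol\cdot t}(x,y) = \sup_{\Fol'} D_{\Fol'}(x,y) = D(x,y),
\]
which is the claim.

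**Main obstacle.** The only genuinely delicate point is the bookkeeping with the modular function for non-unimodular $G$ — one must be careful that the change of variables $u=st$ in $\int_{F_n} f(st)\,dm(s)$ produces exactly the factor that is absorbed by the rescaling $|F_n t|=\Delta(t)|F_n|$, so that the normalised averages are literally equal with no residual constant. Everything else (the Følner property of $\Fol\cdot t$, the behaviour of $\varlimsup$, the bijectivity of right translation on Følner sequences) is routine. I would state the modular-function computation explicitly as a short lemma or inline remark, since it is the one place the non-abelian, non-unimodular generality actually bites, and then the rest of the proof is a one-paragraph substitution-and-supremum argument.
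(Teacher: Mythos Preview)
Your proposal is correct and follows essentially the same route as the paper: the paper likewise uses the modular-function identity to show $\frac{1}{|F_n s|}\int_{F_n s} g(t)\,dm(t)=\frac{1}{|F_n|}\int_{F_n} g(ts)\,dm(t)$, deduces $D_{\Fol}(sx,sy)=D_{\Fol s}(x,y)$, observes that $(F_n s)$ is again a F{\o}lner sequence, and concludes by taking the supremum. Your write-up is in fact slightly more explicit than the paper's (which leaves the F{\o}lner invariance of $\Fol\cdot t$ and the bijectivity of right translation as implicit), but the argument is the same.
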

\begin{proof} Recall that there exists a unique $\Delta :
G\to (0,\infty)$ (called \emph{modular function}) whose defining
property is that
$$\int h(ts) dm(t) = \Delta (s) \int h(t) dm (t),$$
for all Haar measurable $h: G\to [0,\infty)$. A short 
computation and canceling of modular functions then gives that
$$\frac{1}{\abs{Fs}} \int_{Fs} g(t) dm (t) = \frac{1}{\abs{F}} \int_F
g(ts) dm(t),$$ for all $s\in G$ and all Haar measurable bounded $g:
G\to [0,\infty)$  whenever $F$ is a compact subset of 
$G$ with positive Haar measure.
This shows that
$$D_{\Fol} (sx,sy) = D_{\Fol s} (x,y),$$ where  $\Fol s$ denotes
the sequence $(F_n s)_{n\in\N}$. Now, $(F_n s)_{n\in\N}$ is clearly a Følner sequence as
well. Hence, the desired statement follows as the definition of $D$
involves all Følner sequences.
\end{proof}

Let  a  dynamical system $(X,G)$ be given. For $x, y\in X$ write
$x\!\!\sim\! y$ if $D(x,y)=0$. If $(X,G)$ is mean equicontinuous,
then clearly the quotient map $\beta:X\to X/\!\!\sim$ is continuous.
By the invariance of $D$ due to Proposition \ref{prop:D-is-invariant}, the action
of $G$ on $X/\!\!\sim$ given by $g\beta(x)\=\beta(gx)$ is well defined and isometric.
Hence, $(X/\!\!\sim,G)$ is an equicontinuous factor of $(X,G)$.

\begin{proposition}\label{prop:Besicovitch pseudometric quotient is MEF}
    If $(X,G)$ is mean equicontinuous, then $(X/\!\!\sim,G)$ is its MEF.
\end{proposition}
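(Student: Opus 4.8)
The plan is to use the universal property of the MEF in the form stated in Subsection~\ref{sec:Notation}: if $(\mef',G)$ is an equicontinuous factor of $(X,G)$ with factor map $\pi'$ such that $\pi'(x)=\pi'(y)$ implies $\inf_{t\in G}d(tx,ty)=0$, then $(\mef',G)$ is conjugate to the MEF. We already know from the discussion preceding the statement that $(X/\!\!\sim,G)$ is an equicontinuous factor of $(X,G)$ via the quotient map $\beta$, so it remains only to verify that the map $\beta$ has the required separation property, i.e.\ that $\beta(x)=\beta(y)$ forces $\inf_{t\in G}d(tx,ty)=0$.

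By definition of the equivalence relation $\sim$, the condition $\beta(x)=\beta(y)$ is precisely $D(x,y)=0$. So the core of the argument is the implication $D(x,y)=0\ \Rightarrow\ \inf_{t\in G}d(tx,ty)=0$. This is elementary and unwinds the definitions: since $D(x,y)=\sup_\Fol D_\Fol(x,y)=0$, in particular $D_\Fol(x,y)=0$ for any fixed Følner sequence $\Fol=(F_n)_{n\in\N}$, which means $\varlimsup_{n\to\infty}\frac{1}{\abs{F_n}}\int_{F_n}d(tx,ty)\,dm(t)=0$. If it were the case that $\inf_{t\in G}d(tx,ty)=c>0$, then the integrand would be bounded below by $c$ for all $t$, forcing each average to be at least $c$ and hence $D_\Fol(x,y)\ge c>0$, a contradiction. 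Thus $\inf_{t\in G}d(tx,ty)=0$, as needed. (This is exactly the elementary step already invoked at the end of the proof of Theorem~\ref{thm:topo-isomorphy implies mean equicontinuity}.)

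Putting these together: $\beta\colon X\to X/\!\!\sim$ is a factor map onto an equicontinuous system, and $\beta(x)=\beta(y)$ implies $\inf_{t\in G}d(tx,ty)=0$; by the universal property of the MEF recalled in Subsection~\ref{sec:Notation}, $(X/\!\!\sim,G)$ is conjugate to $(\mef,G)$. Hence $(X/\!\!\sim,G)$ is the maximal equicontinuous factor of $(X,G)$.

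There is essentially no obstacle here — the two genuine inputs (that $X/\!\!\sim$ is an equicontinuous factor, relying on the $G$-invariance of $D$ from Proposition~\ref{prop:D-is-invariant}, and the universal property of the MEF) are already established in the excerpt, and the remaining separation step is a one-line estimate. The only thing to be slightly careful about is that one needs $D(x,y)=0$ to give $D_\Fol(x,y)=0$ for \emph{some} Følner sequence, which is immediate since $D$ is the supremum over all Følner sequences and $G$ admits at least one Følner sequence by amenability.
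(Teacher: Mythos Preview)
Your proof is correct and follows exactly the paper's approach: the paper's proof reads, in full, ``As discussed in Section~\ref{sec:Notation} it suffices to show that $\inf_{t\in G} d(tx,ty) = 0$ whenever $\beta(x) = \beta(y)$. This, however, is clear.'' You have simply spelled out the ``clear'' step.
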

\begin{proof}  As
discussed in Section \ref{sec:Notation} it suffices to show that
 $\inf_{t\in G} d(tx,ty) = 0$ whenever $\beta(x) = \beta(y)$.
 This, however, is clear.
\end{proof}

\begin{theorem}\label{thm: mean equicontinuity implies topo-isomorphy}
    Assume that $(X,G)$ is mean equicontinuous.
    Then $(X,G)$ is topo-isomorphic to its maximal equicontinuous factor $(\mef,G)$.
\end{theorem}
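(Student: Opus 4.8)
The plan is to show that the topological factor map $\pi\colon X\to\mef$ onto the maximal equicontinuous factor is an isomorphism $\bmod\,0$ with respect to every $G$-invariant measure $\mu$ and its push-forward $\pi(\mu)$. By Proposition~\ref{prop:Besicovitch pseudometric quotient is MEF}, we may identify $(\mef,G)$ with $(X/\!\!\sim,G)$ and take $\pi=\beta$, the quotient map for the relation $x\sim y\iff D(x,y)=0$. By Proposition~\ref{prop:topo-isomorphy-via-unitary-operators}, it suffices to prove that the isometry $U_\mu\colon L_2(\mef,\pi(\mu))\to L_2(X,\mu)$, $f\mapsto f\circ\pi$, is surjective, i.e.\ that $\pi^*L_2(\mef,\pi(\mu))$ is dense in $L_2(X,\mu)$; equivalently, that the $\sigma$-algebra $\pi^{-1}(\mc B(\mef))$ is dense in $\mc B(X)$ modulo $\mu$-null sets.

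First I would reduce to the ergodic case: by Lemma~\ref{lem: ergodic representation} (or the ergodic decomposition), if $\pi$ is a topo-isomorphy with respect to every ergodic measure then it is one with respect to every invariant measure, so assume $\mu$ is ergodic. Fix $f\in\mc C(X)$ with $\|f\|_\infty\le 1$; by Proposition~\ref{prop:equivalent char mean equicontiunity} the pseudometric $D_f$ is continuous on $X$, and by the remark following Theorem~\ref{thm:intro equivalence criterion of mean equi. involving pointwise u.e} (applicable here since mean equicontinuity is being assumed) one knows $D=D_\Fol$ and the $\limsup$ defining $D_\Fol$ is a genuine limit independent of $\Fol$. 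The key point is then the following: the function $\hat f\colon\mef\to\C$ that we want should satisfy $\hat f(\pi(x))\approx A(\Fol,f)(x)$, the Birkhoff average of $f$ at $x$. Concretely, for $\mu$-generic $x$ (which exist along a subsequence $\Fol'$ by Theorem~\ref{thm: existence of generic points}), one has $A_n(\Fol',f)(x)\to\mu(f)$, but that is the wrong target; instead one should average $f$ against a test function and use the topo-structure. The cleaner route is to show directly that $f$ is $\mu$-a.e.\ equal to a function measurable with respect to $\pi^{-1}(\mc B(\mef))$, using that $D_f(x,y)=0$ whenever $\pi(x)=\pi(y)$: indeed $\pi(x)=\pi(y)$ means $D(x,y)=0$, hence $D_f(x,y)=0$ by Proposition~\ref{prop:equivalent char mean equicontiunity}. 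Since $D_f$ is continuous and $D_f(x,y)=0$ on fibers of $\pi$, $D_f$ descends to a continuous pseudometric on $\mef$; the idea is then that $f$ itself, up to an $L_2(\mu)$-small error, factors through $\pi$.

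To make this precise I would argue as in \cite[Theorem 3.8]{LiTuYe2015} and \cite[Proposition 2.5]{DownarowiczGlasner2016}: cover $X$ by finitely many sets of small $D_f$-diameter; since $D_f$ vanishes on $\pi$-fibers and is continuous, these can be chosen to be (essentially) $\pi$-saturated, yielding a $\pi^{-1}(\mc B(\mef))$-measurable simple function $\varepsilon$-close to $f$ in sup norm, hence in $L_2(\mu)$. As $\mc C(X)$ is dense in $L_2(\mu)$ and such $f$ are approximated arbitrarily well by $\pi$-measurable functions, $\pi^{-1}(\mc B(\mef))$ is dense in $\mc B(X)$ mod $\mu$, so $U_\mu$ is unitary. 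Applying Proposition~\ref{prop:topo-isomorphy-via-unitary-operators} for each ergodic $\mu$, and then for each invariant $\mu$ via ergodic decomposition, gives that $\pi$ is a topo-isomorphy, i.e.\ $(X,G)$ is a topo-isomorphic extension of $(\mef,G)$.

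The main obstacle is the covering/saturation step: one must turn the statement ``$D_f$ is continuous and vanishes on $\pi$-fibers'' into an honest $\pi^{-1}(\mc B(\mef))$-measurable approximant of $f$. The subtlety is that $D_f$ need not be a metric even on $\mef$ (distinct fibers can have $D_f$-distance zero while $f$ is genuinely non-constant there), so one cannot simply say $f$ factors through $\pi$ pointwise; one genuinely needs the $L_2(\mu)$-approximation and the fact that finitely many $D_f$-small sets suffice by compactness, together with Urysohn-type functions on $\mef$ (using equicontinuity of $\mef$) to build the $\pi$-measurable simple function. Once this is set up the rest is the routine density argument and an invocation of the earlier propositions.
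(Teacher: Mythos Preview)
Your overall framework (identify $\mef$ with $X/\!\!\sim$, then prove unitarity of $U_\mu$ via Proposition~\ref{prop:topo-isomorphy-via-unitary-operators}) is fine, and the observation that $D_f$ vanishes on $\pi$-fibers is correct. However, the covering/saturation step contains a genuine gap that you yourself half-notice but do not resolve. Smallness of $D_f(x,y)$ says nothing about smallness of $|f(x)-f(y)|$: by definition $D_f$ is a \emph{time-averaged} quantity, and it can perfectly well vanish while $f(x)\neq f(y)$. Consequently, a finite cover of $X$ by sets of small $D_f$-diameter does \emph{not} yield a simple function $\eps$-close to $f$ in sup norm (or even in $L_2(\mu)$); choosing a value of $f$ on each piece gives you a function close to $f$ only in the $D_f$-pseudometric, which is not the norm you need. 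Your remark that ``distinct fibers can have $D_f$-distance zero while $f$ is genuinely non-constant there'' is exactly the obstruction, and the Urysohn-type patching you sketch does not get around it: it produces $\pi$-saturated sets, but you still have no control over the oscillation of $f$ on them. The reduction to ergodic $\mu$ is also not as immediate as you suggest, since ``isomorphism $\bmod\,0$ for every ergodic measure'' does not formally imply ``isomorphism $\bmod\,0$ for every invariant measure'' without a further argument about how the full-measure sets fit together across the ergodic decomposition.

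The paper avoids this difficulty entirely by arguing on the \emph{fiber measures} rather than on $f$: one disintegrates an arbitrary invariant $\mu$ over $\nu=\pi(\mu)$ as $\mu=\int\mu_z\,d\nu(z)$ and shows that $\mu_z$ is a Dirac measure for $\nu$-a.e.\ $z$. This is done by considering the relative product measure $\mu\times_\nu\mu$ on $X\times_\mef X$; if it were not supported on the diagonal, Lemma~\ref{lem: ergodic representation} would produce an ergodic measure $\tilde\mu$ on $X\times_\mef X$ charging a set bounded away from the diagonal, and a $\tilde\mu$-generic pair $(x,y)$ (Theorem~\ref{thm: existence of generic points}) would then satisfy $\pi(x)=\pi(y)$ but $D(x,y)\geq D_\Fol(x,y)=\int d\, d\tilde\mu>0$, contradicting Proposition~\ref{prop:Besicovitch pseudometric quotient is MEF}. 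Once $\mu_z$ is a.e.\ a Dirac, $z\mapsto\supp(\mu_z)$ is the required measurable inverse to $\pi$. Note that this argument treats all invariant $\mu$ at once and uses the pseudometric $D$ only through generic points for an auxiliary ergodic measure on the product, never attempting to approximate individual $f$'s.
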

\begin{proof}
    Fix a $G$-invariant measure $\mu$ and let $\mu=\int\mu_z d\nu(z)$ be the
    disintegration of $\mu$ over its image measure $\nu\=\pi(\mu)$
    (see, e.g., \cite{Furstenberg1981}).
    We consider the \emph{relative product measure} $\mu\times_{\nu}\mu$
    supported in the \emph{relative product} of $X$ over $\T$
    \[
        X\times_\T X\=\{(x,y)\in X\times X\;|\;\pi(x)=\pi(y)\}
    \]
    which is defined by
    \[
        \mu\times_{\nu}\mu\=\int\mu_z\times\mu_z d\nu(z).
    \]
    Recall that $\mu\times_{\nu}\mu$ is invariant under the action of $G$ on
    $X\times_\T X$ given by $g(x,y)\=(gx,gy)$ for each $(x,y)\in X\times_\T X$ and
    $g\in G$, see Proposition 5.14 in \cite{Furstenberg1981}.

    We claim that $\mu\times_{\nu}\mu$ is only supported on the \emph{diagonal}
    $\{(x,x)\in X\times X\;|\;x\in X\}\subseteq X\times_\T X$.
    For a contradiction assume this is not the case.
    Then there exists an open set $A$ in $X\times_\T X$ which has a positive
    distance to the diagonal and fulfills $(\mu\times_{\nu}\mu)(A)>0$.
    Using Lemma \ref{lem: ergodic representation}, this yields that there is
    an ergodic measure $\tilde\mu$ on $X\times_\T X$ with $\tilde\mu(A)>0$.
    According to Theorem \ref{thm: existence of generic points},
    $\tilde\mu$-almost every point is $\tilde\mu$-generic with respect to some
    Følner sequence $\Fol$.
    Now, for every such
    $(x,y)\in X\times_\T X$ we have
    \[
        D(x,y)\geq D_\Fol(x,y)=\int d(z,w)d\tilde\mu(z,w)>0.
    \]
    This is in contradiction to the previous proposition because
    $\pi(x)=\pi(y) \Leftrightarrow D(x,y)=0$ in case that $(X,G)$ is mean
    equicontinuous.

    Now, observe that the only measures supported in $\pi^{-1}(z)$ whose Cartesian
    squares are supported in the diagonal of $X\times_\T X$ are delta measures.
    Thus, $\mu_z$ is a delta measure for $\nu$-almost every $z\in\T$.
    Finally, the map which assigns to each $z$ the support of $\mu_z$ is an
    isomorphism with respect to $\nu$ and $\mu$ whose inverse coincides with $\pi$ for
    $\mu$-a.e.\ point.
\end{proof}

\subsection{Further properties and first non-minimal examples}\label{subsec:further}

Here, we discuss first consequences of the results of the previous subsections.
In particular, we show that the preservation of the maximal equicontinuous
factor is a characteristic feature of topo-isomorphic extensions of
mean equicontinuous systems.
Furthermore, we discuss some examples of non-minimal mean
equicontinuous systems for $G=\Z$.

\begin{theorem}[Characterization of mean equicontinuous extensions]
\label{thm:characterisation of mean equicontinuous extensions via same mef}
    Let $(X,G)$ be an extension of a mean equicontinuous system $(Y,G)$.
    Then, $(X,G)$ is topo-isomorphic to $(Y,G)$ if and only if it is mean
    equicontinuous and its MEF agrees with that of $(Y,G)$.
\end{theorem}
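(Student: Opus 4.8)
The plan is to prove both implications by combining the structural result (Theorem \ref{thm:structural result mean equicontinuous systems}) with the transitivity of topo-isomorphy and the universal property of the MEF. Let $h\colon X\to Y$ denote the given factor map and let $\pi_Y\colon Y\to\T_Y$ be the factor map onto the MEF of $(Y,G)$; since $(Y,G)$ is mean equicontinuous, $\pi_Y$ is a topo-isomorphy by Theorem \ref{thm:structural result mean equicontinuous systems}.

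For the ''only if`` direction, suppose $h$ is a topo-isomorphy. Then $\pi_Y\circ h\colon X\to\T_Y$ is a composition of two topo-isomorphies onto an equicontinuous system, hence itself a topo-isomorphy: given a $G$-invariant measure $\mu$ on $X$, the measure $h(\mu)$ is $G$-invariant on $Y$, so $h$ is an isomorphism $\bmod\,0$ with respect to $\mu$ and $h(\mu)$, and $\pi_Y$ is an isomorphism $\bmod\,0$ with respect to $h(\mu)$ and $\pi_Y(h(\mu))$; composing the two $\bmod\,0$ bijections on full-measure sets yields an isomorphism $\bmod\,0$ with respect to $\mu$ and $(\pi_Y\circ h)(\mu)$. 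Now apply Theorem \ref{thm:topo-isomorphy implies mean equicontinuity} with $(\mef',G)=(\T_Y,G)$ and factor map $\pi'=\pi_Y\circ h$, translated through Proposition \ref{prop:topo-isomorphy-via-unitary-operators} (which is precisely the statement that the associated $U_\mu$ is unitary for every invariant $\mu$): this gives that $(X,G)$ is mean equicontinuous and that $(\T_Y,G)$ is its MEF. Since $(\T_Y,G)$ is also (by construction) the MEF of $(Y,G)$, the MEFs agree.

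For the ''if`` direction, suppose $(X,G)$ is mean equicontinuous and its MEF coincides with that of $(Y,G)$; write $(\T,G)$ for this common MEF, with factor maps $\pi_X\colon X\to\T$ and $\pi_Y\colon Y\to\T$. The key point is that the two factorizations are compatible: because $(Y,G)$ is mean equicontinuous we have $\pi_Y(y)=\pi_Y(y')\iff\inf_{t\in G}d(ty,ty')=0$ (via Proposition \ref{prop:Besicovitch pseudometric quotient is MEF} and the discussion in Section \ref{sec:Notation}), and since $h$ is a factor map, $h(x)=h(x')$ forces $\inf_{t\in G}d(tx,tx')=0$ on $X$, hence $\pi_X(x)=\pi_X(x')$; conversely, by the universal property of the MEF of $(Y,G)$, the equicontinuous factor $(\T,G)=\pi_X$-image of $X$ factors through $h$, so there is a factor map $q\colon Y\to\T$ with $q\circ h=\pi_X$, and by uniqueness of the MEF $q=\pi_Y$ up to conjugacy. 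Thus $\pi_Y\circ h=\pi_X$. Now fix a $G$-invariant measure $\mu$ on $X$; since $(X,G)$ is mean equicontinuous, $\pi_X$ is a topo-isomorphy (Theorem \ref{thm: mean equicontinuity implies topo-isomorphy}), so $\pi_X$ is injective on a set of full $\mu$-measure, and therefore the earlier factor $h$ is also injective on that same set; being a measurable factor map that is injective $\bmod\,0$ and pushes $\mu$ to $h(\mu)$, it is an isomorphism $\bmod\,0$ with respect to $\mu$ and $h(\mu)$. As this holds for every invariant $\mu$, the map $h$ is a topo-isomorphy.

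The main obstacle is the ''if`` direction, specifically establishing the commuting triangle $\pi_Y\circ h=\pi_X$: one must argue carefully that the MEF factor map of $X$ is subordinate to $h$ (so that it factors through $Y$) and then identify the resulting equicontinuous factor of $Y$ with $\pi_Y$ using the universal property — the hypothesis that the two MEFs ''agree`` has to be used in exactly this form, i.e.\ as an abstract conjugacy that is compatible with the factor structures. Once the triangle commutes, the conclusion is immediate from the fact that an abstract topo-isomorphy cannot be ''helped`` by composing with a further factor map: if $\pi_X=\pi_Y\circ h$ is injective $\bmod\,0$, the first factor $h$ in the composition must already be injective $\bmod\,0$.
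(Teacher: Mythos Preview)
Your overall strategy matches the paper's proof: for the ``only if'' direction, compose $h$ with $\pi_Y$ and apply Theorem~\ref{thm:topo-isomorphy implies mean equicontinuity}; for the ``if'' direction, observe that both $\pi_Y$ and $\pi_Y\circ h$ are topo-isomorphies (by Theorem~\ref{thm: mean equicontinuity implies topo-isomorphy}) and conclude that $h$ must be one too. The paper argues exactly this way, and your final ``cancellation'' step (if $\pi_Y\circ h$ is injective $\bmod\,0$ then already $h$ is) is correct.

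However, your detailed argument for the commuting triangle $\pi_Y\circ h=\pi_X$ contains a gap. You claim that ``since $h$ is a factor map, $h(x)=h(x')$ forces $\inf_{t\in G}d(tx,tx')=0$ on $X$''. This is false without further hypotheses: take $X=\S^1\times\S^1$ with an irrational product rotation, $Y=\S^1$ the first factor, and $h$ the projection; then $h(x,y_1)=h(x,y_2)$ while $\inf_{t}d(t(x,y_1),t(x,y_2))=\lvert y_1-y_2\rvert>0$. The claim \emph{needs} the hypothesis that the MEFs agree, but at the point where you assert it you have not yet invoked that hypothesis. Your next step---that $\pi_X$ factors through $h$ ``by the universal property of the MEF of $(Y,G)$''---rests on the same unproved assertion (the universal property of $\mef_Y$ concerns equicontinuous factors of $Y$, not factorizations of $\pi_X$ through $h$).

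The paper avoids this entirely by reading ``the MEFs agree'' in the natural way for an extension $h\colon X\to Y$: the composition $\pi_Y\circ h\colon X\to\mef$ \emph{is} a factor map onto the MEF of $X$. With this reading, Theorem~\ref{thm: mean equicontinuity implies topo-isomorphy} applies directly to $\pi_Y\circ h$, and no separate triangle argument is needed. Your closing paragraph essentially concedes this (``the hypothesis \ldots\ has to be used \ldots\ as an abstract conjugacy that is compatible with the factor structures''), which is the right interpretation; but then the intermediate reasoning via $\inf_t d(tx,tx')$ is both unnecessary and, as written, incorrect.
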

\begin{proof}
    Assume first that  $(X,G)$ is a topo-isomorphic extension of $(Y,G)$.
    By Theorem \ref{thm: mean equicontinuity implies topo-isomorphy},
    $(Y,G)$ is a topo-isomorphic extension of its MEF $(\mef,G)$.
    Clearly, $(X,G)$ is also a topo-isomorphic extension of $(\mef,G)$.
    The statement now follows from Theorem \ref{thm:topo-isomorphy implies mean
    equicontinuity}.

    Consider now the situation that $(X,G)$ is mean equicontinuous and
    the MEFs of $(Y,G)$ and $(X,G)$ agree.    Let $h$ be the factor map
    from $X$ to $Y$ and $\pi$ a factor map from $Y$ to $\mef$ (the MEF of both systems).
    Note that both $\pi$ and $\pi\circ h:X\to\mef$ are topo-isomorphies,
    according to Theorem \ref{thm: mean equicontinuity implies topo-isomorphy}.
    This implies that $h$ is a topo-isomorphy, too.
\end{proof}

\begin{corollary}
    If two  equicontinuous dynamical systems $(X,G)$ and $(Y,G)$ are topo-isomorphic,
    then they are in fact topological conjugate.
\end{corollary}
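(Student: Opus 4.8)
The plan is to exploit the fact that for an equicontinuous system, the maximal equicontinuous factor is the system itself. Concretely, if $(X,G)$ is equicontinuous, then the identity map $\mathrm{id}\: X\to X$ realizes $(X,G)$ as its own MEF (it trivially satisfies the defining property, since $\mathrm{id}(x)=\mathrm{id}(y)$ forces $x=y$ and hence $\inf_{t\in G} d(tx,ty)=0$). The same is true for $(Y,G)$. So both $(X,G)$ and $(Y,G)$ are (their own) MEFs.

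First I would invoke the hypothesis: $(X,G)$ and $(Y,G)$ are topo-isomorphic, say via a topo-isomorphy $h\: X\to Y$. Since equicontinuous systems are in particular mean equicontinuous (take $\delta_\eps=\eps$; then $D_\Fol(x,y)\le \sup_{t}d(tx,ty)=\hat d(x,y)$, which can be made $<\eps$), we may apply Theorem~\ref{thm:characterisation of mean equicontinuous extensions via same mef} with the roles ``$(X,G)$ an extension of the mean equicontinuous system $(Y,G)$ via $h$.'' That theorem tells us: since $(X,G)$ is a topo-isomorphic extension of $(Y,G)$, it is mean equicontinuous and its MEF agrees with that of $(Y,G)$. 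But the MEF of $(X,G)$ is $(X,G)$ itself and the MEF of $(Y,G)$ is $(Y,G)$ itself, so the ``agreement'' of MEFs means precisely that the conjugacy between these two canonical factors — which is part of the ``agrees with'' statement, MEFs being unique only up to conjugacy — is a topological conjugacy between $(X,G)$ and $(Y,G)$.

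Alternatively, and perhaps more cleanly, I would argue directly: let $h\:X\to Y$ be the topo-isomorphy. The composition of $h$ with the (trivial) factor map $\mathrm{id}\:Y\to Y$ exhibits $(Y,G)$ as an equicontinuous factor of $(X,G)$ via $h$ with the property that $h(x)=h(y)$ implies $\inf_{t\in G}d(tx,ty)=0$ — here one uses that $h$, being an isomorphism $\bmod\,0$ with respect to every invariant measure, together with Proposition~\ref{prop: unitarity implies distant ergodic components} and the decomposition results, forces $h$ to be injective on the equicontinuous space $X$: indeed $\overline{Gx}$ is minimal by Lemma~\ref{lem: equicont partition}, hence uniquely ergodic, so distinct points $x\ne x'$ with $h(x)=h(x')$ would lie in orbit closures whose (delta-type, by minimality) ergodic measures get identified under $h$, contradicting Proposition~\ref{prop: unitarity implies distant ergodic components} unless $\overline{Gx}=\overline{Gx'}$, and within a single minimal equicontinuous orbit closure $h$ restricted there is an isomorphism $\bmod\,0$ of a measure of full support, which on an equicontinuous (hence isometric, under $\hat d$) system forces injectivity. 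Thus $h$ is a continuous bijection between compact metric spaces, hence a homeomorphism, and it intertwines the actions, so it is a conjugacy.

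The main obstacle is making precise why the topo-isomorphy $h$ must be genuinely injective (not just injective $\bmod$ every invariant measure). The first route sidesteps this entirely by quoting Theorem~\ref{thm:characterisation of mean equicontinuous extensions via same mef} and the uniqueness of the MEF: the MEF of $(X,G)$ is conjugate to the MEF of $(Y,G)$, and since each equals the corresponding system, that conjugacy \emph{is} the desired topological conjugacy between $(X,G)$ and $(Y,G)$. So I would take the short route and present the proof as: equicontinuous systems are mean equicontinuous and are their own MEFs; by Theorem~\ref{thm:characterisation of mean equicontinuous extensions via same mef}, topo-isomorphy of $(X,G)$ and $(Y,G)$ forces their MEFs to agree, i.e.\ $(X,G)$ and $(Y,G)$ are conjugate.
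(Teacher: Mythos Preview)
Your proposal is correct and your ``short route'' is exactly the paper's proof: apply Theorem~\ref{thm:characterisation of mean equicontinuous extensions via same mef} to conclude that the MEFs agree, then note that an equicontinuous system coincides with its own MEF. Your alternative direct-injectivity argument is not needed (and the parenthetical ``delta-type, by minimality'' is inaccurate---minimal equicontinuous systems need not have point-mass ergodic measures), so it is good that you discarded it in favor of the clean route.
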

\begin{proof} By the previous theorem such systems share the same
MEF. By equicontinuity, however, they agree with their MEF.
\end{proof}

\begin{remark}
    The corollary is reminiscent of the rigidity phenomenon which is well known
    for ergodic abelian equicontinuous group actions, see for instance \cite{FeresKatok2002}.
\end{remark}

In order to state another consequence of Theorem
\ref{thm:characterisation of mean equicontinuous extensions via same
mef} we need the following observation.

\begin{proposition} Any topological factor of a mean equicontinuous system is
mean equicontinuous as well.
\end{proposition}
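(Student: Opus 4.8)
The plan is to transport mean equicontinuity through a factor map directly at the level of the pseudometric $D$, using the characterization in Proposition~\ref{prop:equivalent char mean equicontiunity} via the functions $D_f$. Suppose $(X,G)$ is mean equicontinuous and $h\colon X\to Y$ is a factor map. Let $d_Y$ be a metric on $Y$; I want to show $(Y,G)$ is mean equicontinuous with respect to $d_Y$. By Proposition~\ref{prop:equivalent char mean equicontiunity} it suffices to show that for every $g\in\mc C(Y)$ the pseudometric $D_g$ (computed in $Y$) is continuous on $Y$. The key point is the identity
\[
    D_g^{Y}(h(x),h(x')) = D_{g\circ h}^{X}(x,x')
\]
for all $x,x'\in X$, which holds because $g(t\,h(x)) = (g\circ h)(tx)$ by equivariance of $h$, so the integrands defining the two quantities literally coincide for every Følner sequence.

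First I would fix $g\in\mc C(Y)$ and set $f\=g\circ h\in\mc C(X)$. Since $(X,G)$ is mean equicontinuous, Proposition~\ref{prop:equivalent char mean equicontiunity} gives that $D_f$ is continuous on $X$; in particular, for every $\eps>0$ there is $\delta>0$ with $d(x,x')<\delta \implies D_f(x,x')<\eps$. Next I would use uniform continuity of $h$ (it is continuous on the compact metric space $X$): given $\eta>0$ there is $\delta'>0$ so that $d(x,x')<\delta'$ forces $d_Y(h(x),h(x'))<\eta$. Then, to prove continuity of $D_g$ at a point $y_0=h(x_0)\in Y$, I would argue as follows: given $\eps>0$, pick $\delta$ from continuity of $D_f$; I claim there is $\eta>0$ such that $d_Y(y,y_0)<\eta \implies D_g(y,y_0)<\eps$. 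Indeed, since $h$ is surjective and $X$ is compact, the map $h$ is a closed quotient map, hence for $y$ close enough to $y_0$ one can find preimages $x\in h^{-1}(y)$, $x_0'\in h^{-1}(y_0)$ with $d(x,x_0')<\delta$ — this uses that the set-valued map $y\mapsto h^{-1}(y)$ is upper semicontinuous into the compact space $X$, so preimages of nearby points come arbitrarily close in Hausdorff distance. Then $D_g(y,y_0) = D_g(h(x),h(x_0')) = D_f(x,x_0') < \eps$, using the displayed identity and $G$-invariance/symmetry of $D_f$ to switch the representative of $y_0$. This establishes continuity of $D_g$, hence mean equicontinuity of $(Y,G)$.

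The main obstacle is the semicontinuity argument for preimages: one must ensure that when $y$ is close to $y_0$ in $Y$, there genuinely exist preimages $x$ of $y$ and $x_0'$ of $y_0$ that are close in $X$. This is a standard compactness fact — if $y_n\to y_0$ and $x_n\in h^{-1}(y_n)$, any subsequential limit $x_*$ of $(x_n)$ satisfies $h(x_*)=y_0$ by continuity, so for large $n$ the point $x_n$ is within $\delta$ of some preimage of $y_0$ — but it must be stated carefully, and one should note that it is exactly here that compactness of $X$ (and hence of $Y$) is used. Everything else (the identity $D_g^{Y}\circ(h\times h) = D_{g\circ h}^{X}$, and the reduction to the $D_f$-criterion) is routine given Proposition~\ref{prop:equivalent char mean equicontiunity}. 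As an alternative to the $D_f$-route, one could instead invoke Theorem~\ref{thm: mean equicontinuity implies topo-isomorphy} to write $(X,G)$ as a topo-isomorphic extension of its MEF $(\mef,G)$, observe that $(\mef,G)$ is then also the MEF of $(Y,G)$ (a factor's MEF is a factor of the original MEF, and here the composite $X\to Y\to(\text{MEF of }Y)$ forces equality), and then apply Theorem~\ref{thm:topo-isomorphy implies mean equicontinuity} once one checks $(Y,G)$ is a topo-isomorphic extension of $(\mef,G)$; but verifying that last point seems to need essentially the same preimage-compactness input, so I would present the direct $D_f$ argument.
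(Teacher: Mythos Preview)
Your proposal is correct and follows essentially the same approach as the paper: both reduce to Proposition~\ref{prop:equivalent char mean equicontiunity} together with the identity $D_g^Y\circ(h\times h)=D_{g\circ h}^X$. The paper simply compresses your explicit preimage/compactness argument into the one-line observation that a continuous surjection between compact metric spaces is a quotient map, so continuity of $D_g^Y$ on $Y\times Y$ is equivalent to continuity of $D_{g\circ h}^X$ on $X\times X$.
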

\begin{proof}
    Let $(Y,G)$ be a factor of a mean equicontinuous system $(X,G)$ with
    factor map $h$.
    By Proposition \ref{prop:equivalent char mean equicontiunity}, it suffices
    to show that $D_f$ is continuous for any continuous $f$ on $Y$.
    As the factor map $h$ is a continuous surjective map between compact spaces,
    continuity of $D_f$ is  equivalent to continuity of $D_f \circ h = D_{f\circ h}$
    and the statement follows from a further application of
    Proposition~\ref{prop:equivalent char mean equicontiunity}.
\end{proof}

Given this proposition,  Theorem \ref{thm:characterisation of mean
equicontinuous extensions via same mef} has the following immediate
consequence (systems fitting into the setting of the following statement can
be found in \cite[Section 5]{DownarowiczDurand2002}).

\begin{corollary}\label{cor: 1} If  $(X,G)$ is mean
    equicontinuous and an extension of $(Y,G)$ with the same MEF,
    then $(X,G)$ is a topo-isomorphic extension of $(Y,G)$.
\end{corollary}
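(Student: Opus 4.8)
The plan is to deduce this corollary directly from Theorem~\ref{thm:characterisation of mean equicontinuous extensions via same mef} together with the preceding proposition, with essentially no new work. The hypotheses are that $(X,G)$ is mean equicontinuous, that $(X,G)$ is an extension of $(Y,G)$ via some factor map $h$, and that $(X,G)$ and $(Y,G)$ have the same MEF.

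First I would observe that $(Y,G)$ is indeed mean equicontinuous: it is a topological factor of the mean equicontinuous system $(X,G)$, so the preceding proposition (any topological factor of a mean equicontinuous system is mean equicontinuous) applies. This is needed only to make sense of the phrase ``mean equicontinuous system $(Y,G)$'' in the hypotheses of Theorem~\ref{thm:characterisation of mean equicontinuous extensions via same mef}; strictly speaking one could also phrase it so that this is part of the assumption, but spelling it out makes the deduction clean.

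Next, I would simply invoke Theorem~\ref{thm:characterisation of mean equicontinuous extensions via same mef} in the direction ``mean equicontinuity and equality of MEFs $\Rightarrow$ topo-isomorphy''. Since $(X,G)$ is an extension of the mean equicontinuous system $(Y,G)$, and $(X,G)$ is itself mean equicontinuous with MEF agreeing with that of $(Y,G)$, the theorem yields that $h\colon X\to Y$ is a topo-isomorphy, i.e.\ $(X,G)$ is a topo-isomorphic extension of $(Y,G)$. That is exactly the conclusion.

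There is really no main obstacle here — the corollary is a one-line reformulation packaging the hypotheses into the form required by the theorem. The only point to be slightly careful about is that the ``same MEF'' hypothesis must be read as: the factor map from $Y$ onto its MEF, composed with $h$, realizes the MEF of $X$ (equivalently, the canonical MEF factor maps are compatible), which is precisely the setup under which Theorem~\ref{thm:characterisation of mean equicontinuous extensions via same mef} is stated. Given that reading, the proof is immediate.

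\begin{proof}
By the preceding proposition, $(Y,G)$ is mean equicontinuous, being a topological factor of the mean equicontinuous system $(X,G)$. Thus $(X,G)$ is an extension of a mean equicontinuous system, it is itself mean equicontinuous, and its MEF agrees with that of $(Y,G)$. Theorem~\ref{thm:characterisation of mean equicontinuous extensions via same mef} then yields that $(X,G)$ is a topo-isomorphic extension of $(Y,G)$.
\end{proof}
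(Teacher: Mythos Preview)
Your proof is correct and follows exactly the route the paper takes: the corollary is presented there as an immediate consequence of Theorem~\ref{thm:characterisation of mean equicontinuous extensions via same mef} together with the preceding proposition that factors of mean equicontinuous systems are mean equicontinuous.
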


Much of the previous work on mean equicontinuity is concerned with minimal
$\Z$-actions.
Therefore, we would like to close this section with a discussion of two simple kinds of
examples of well-known systems
which are mean equicontinuous but
not minimal.
The first example will be still transitive (in fact, as we will see, all but one point
have a dense orbit) and the second kind of examples will have no dense orbits
but will still be uniquely ergodic.
For a non-uniquely ergodic system, see Example~\ref{ex:non-uniquely ergodic subshift}.

\begin{example}\label{ex:Cantor substitution}
    Consider the \emph{Cantor substitution}
    \[
        0\mapsto 010\quad\textnormal{and}\quad1\mapsto 111.
    \]
    For a general introduction to substitution systems, see for example \cite{Kurka2003}.
    There are two infinite sequences in $\{0,1\}^\N$ which are invariant with
    respect to the Cantor substitution: the constant sequence $(111\ldots)$
    and the sequence $\omega$ obtained by applying the substitution successively
    to the letter $0$ and its images, i.e.,
    \[
        0 \mapsto 010 \mapsto 010111010 \mapsto 010111010 111111111 010111010\mapsto\cdots.
    \]

    Now, there is a standard method to obtain a two-sided subshift $(\Sigma_\omega,\sigma)$
    from $\omega$, see for instance \cite[Proposition 3.71]{Kurka2003}.
    That is, $\Sigma_\omega$ is a closed subset of $\{0,1\}^\Z$ (equipped with the
    product topology) which is invariant under the action of the left shift
    $\sigma:\{0,1\}^\Z\to\{0,1\}^\Z$.
    By making use of the concrete structure of $\omega$, it is not difficult to
    see that all points in $\Sigma_\omega$, except the constant sequence
    $(\ldots111\ldots)$, have a dense orbit and that the letter $0$ occurs with
    zero density in each sequence of $\Sigma_\omega$.
    The former implies that $\Sigma_\omega$ is uncountable and the latter that
    $(\Sigma_\omega,\sigma)$ is uniquely ergodic, with the unique invariant
    measure the delta measure supported on $(\ldots111\ldots)$.
    Thus $(\Sigma_\omega,\sigma)$ is a non-trivial topo-isomorphic
    extension of its trivial MEF and hence, mean equicontinuous.
\end{example}

\begin{example}\label{ex:Denjoy homeomorphisms}
    By a classical result of Denjoy \cite{Denjoy1932}, there exist examples of
    $C^1$ circle diffeomorphisms which have a rigid rotation $(\S^1,R_{\alpha})$,
    with $\alpha\in\R$ irrational, as a factor but are not conjugate to it.
    Herman \cite{Herman1979} showed later that these examples can even be made
    $C^{1+\varepsilon}$ for any $\varepsilon<1$.
    We will refer to these kind of systems as {\em Denjoy examples}.

    All Denjoy examples have a unique minimal set $C\subset\S^1$ and a unique
    invariant measure $\mu$ supported on $C$.
    We claim that any Denjoy system $(\S^1,f)$ is mean equicontinuous because of
    the following reason.
    Since the factor map $\pi:\S^1\to\S^1$, extending $(\S^1,R_\alpha)$ to $(\S^1,f)$,
    is monotone, we have that $\pi^{-1}(\theta)$ for $\theta\in\S^1$ is either a
    singleton or an interval.
    This immediately implies that the set of non-invertible points
    $\{\theta\in \S^1\: \#\pi^{-1}(\theta)>1\}$ is countable.
    Accordingly, we get that $h$ is invertible on a full measure set with respect
    to $\mu$ (since $h(\mu)$ is the
    Lebesgue measure on $\S^1$, the unique invariant measure of $R_\alpha$).

    McSwiggen has shown that there are Denjoy homeomorphisms on higher-dimensional
    tori that share the same properties just mentioned, in particular, that the
    set of non-invertible points is countable.
    This means these systems are mean equicontinuous, too.
    For examples on the two-torus, see \cite{McSwiggen1993} as well as
    \cite{NortonVelling1994,NortonSullivan1996} for more information concerning these systems.
    For examples defined on general $k$-tori, $k\geq 2$, see \cite{McSwiggen1995}.
\end{example}


\section{Mean equicontinuity via product systems}\label{sec:product}

In Theorem \ref{thm: decomposition theorem} we have seen that any
mean equicontinuous system is pointwise uniquely ergodic. Here, we
show that pointwise unique ergodicity of the product system together
with a continuity property is an equivalent characterization of mean
equicontinuity.

\begin{proposition}\label{prop:continuity-of-overline-A-and-underline-A}
    If there is a left (or right) Følner sequence $\Fol=(F_n)_{n\in\N}$ so that $(X,G)$
    is $\Fol$-mean equicontinuous, then for each $\varphi \in \mc C(X)$,
    $\overline{A}(\Fol,\varphi)(\cdot)$ and $\underline{A}(\Fol,\varphi)(\cdot)$
    are continuous.
\end{proposition}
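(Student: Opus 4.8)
The plan is to reduce everything to a single pointwise-in-$n$ estimate comparing the $F_n$-average of $|\varphi(t\,\cdot)-\varphi(t\,\cdot)|$ with the $F_n$-average of $d(t\,\cdot,t\,\cdot)$, and then feed in $\Fol$-mean equicontinuity. Fix $\varphi\in\mc C(X)$ and set $M\=\Abs{\varphi}_\infty$. First I would use that $\varphi$ is uniformly continuous on the compact space $X$: for each $\eta>0$ there is $\rho(\eta)>0$ with $|\varphi(x')-\varphi(y')|<\eta$ whenever $d(x',y')<\rho(\eta)$. For $x,y\in X$ and $n\in\N$, split $F_n$ into $E_n\=\{t\in F_n: d(tx,ty)<\rho(\eta)\}$ and its complement in $F_n$ (both measurable, since $t\mapsto d(tx,ty)$ is continuous by continuity of the action). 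On $E_n$ the integrand $|\varphi(tx)-\varphi(ty)|$ is at most $\eta$; on $F_n\setminus E_n$ it is at most $2M$, and Chebyshev's inequality gives $m(F_n\setminus E_n)\le\rho(\eta)^{-1}\int_{F_n}d(tx,ty)\,dm(t)$. Hence, for every $n$,
$$\frac{1}{\abs{F_n}}\int_{F_n}|\varphi(tx)-\varphi(ty)|\,dm(t)\;\le\;\eta+\frac{2M}{\rho(\eta)}\cdot\frac{1}{\abs{F_n}}\int_{F_n}d(tx,ty)\,dm(t).$$

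The next step is to take $\limsup_{n\to\infty}$ and transfer this to $\overline A(\Fol,\varphi)$ and $\underline A(\Fol,\varphi)$. Since the constant $\eta$ carries no $\limsup$, one gets
$$\limsup_{n\to\infty}\frac{1}{\abs{F_n}}\int_{F_n}|\varphi(tx)-\varphi(ty)|\,dm(t)\;\le\;\eta+\frac{2M}{\rho(\eta)}\,D_\Fol(x,y).$$
Combining $|A_n(\Fol,\varphi)(x)-A_n(\Fol,\varphi)(y)|\le\frac{1}{\abs{F_n}}\int_{F_n}|\varphi(tx)-\varphi(ty)|\,dm(t)$ with the elementary inequalities $|\limsup_n a_n-\limsup_n b_n|\le\limsup_n|a_n-b_n|$ and $|\liminf_n a_n-\liminf_n b_n|\le\limsup_n|a_n-b_n|$, I obtain
$$\bigl|\overline A(\Fol,\varphi)(x)-\overline A(\Fol,\varphi)(y)\bigr|\;\le\;\eta+\frac{2M}{\rho(\eta)}\,D_\Fol(x,y),$$
and the identical bound for $\underline A(\Fol,\varphi)$. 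To conclude, fix $\eps>0$, put $\eta\=\eps/2$ (which fixes $\rho(\eta)$), and use $\Fol$-mean equicontinuity to choose $\delta>0$ with $d(x,y)<\delta\Rightarrow D_\Fol(x,y)<\eps\rho(\eta)/(4M)$; then $d(x,y)<\delta$ forces $|\overline A(\Fol,\varphi)(x)-\overline A(\Fol,\varphi)(y)|<\eps$, and likewise for $\underline A$, which even gives uniform continuity. For a right Følner sequence the argument is verbatim the same with $m$ replaced by $m_r$, as nothing uses left-invariance of the Haar measure.

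There is no genuine obstacle here; the main things to be careful about are (i) recording that $t\mapsto d(tx,ty)$ is measurable (in fact continuous) so that the set-splitting and the Chebyshev step are legitimate, and (ii) peeling off the constant $\eta$ before passing to the $\limsup$, so that on the right-hand side only the true $\limsup$ term — namely $D_\Fol(x,y)$ — survives and can be made small via $\Fol$-mean equicontinuity.
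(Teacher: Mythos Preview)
Your argument is correct. The splitting into $E_n$ and $F_n\setminus E_n$, the Chebyshev bound on $m(F_n\setminus E_n)$, the passage to the $\limsup$, and the final choice of $\delta$ all go through as written (with the trivial proviso that one should assume $M>0$, the case $\varphi\equiv 0$ being obvious).

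The paper takes a different, somewhat slicker route. Rather than splitting and using Chebyshev, it invokes the observation (made in the proof of Proposition~\ref{prop:equivalent char mean equicontiunity}) that $\Fol$-mean equicontinuity and the pseudometric $D_\Fol$ are independent of the choice of metric generating the topology on $X$. One then replaces $d$ by the equivalent metric $d'(x,y)\=d(x,y)+|\varphi(x)-\varphi(y)|$, with respect to which $|\varphi(x)-\varphi(y)|\le d'(x,y)$ holds pointwise; the desired bound $\varlimsup_n \frac{1}{|F_n|}\int_{F_n}|\varphi(tx)-\varphi(ty)|\,dm(t)\le D_\Fol(x,y)$ is then immediate. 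Your approach trades this metric-change trick for an explicit uniform-continuity-plus-Chebyshev estimate; it is slightly longer but entirely self-contained, avoiding any appeal to the metric-independence of $\Fol$-mean equicontinuity.
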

\begin{proof}
    Similarly as in the proof of Proposition~\ref{prop:equivalent char mean equicontiunity}
    (i)$\Rightarrow$(ii), we see that for each
    $\eps>0$ there is $\delta>0$ such that
    \[
        \varlimsup_{n\to \infty}\frac{1}{\abs{F_n}}
            \int\limits_{F_n}\abs{\varphi(tx)-\varphi(ty)}dm(t)<\eps,
    \]
    whenever $d(x,y)<\delta$.
    This immediately gives the continuity of $\overline{A}(\Fol,\varphi)$ and
    $\underline{A}(\Fol,\varphi)$.
\end{proof}

In the following, if $(X,G)$ is pointwise uniquely ergodic, then the map
$x\mapsto\mu_x$ from $X$ into the space of all Borel probability
measures on $X$ (equipped with the weak-*topology) is defined to send each
$x\in X$ to the unique $G$-invariant measure $\mu_x$ supported on
$\overline{Gx}$.

\begin{theorem}\label{thm:equivalence criterion of mean equi. involving pointwise u.e}
    For a system $(X,G)$ the following conditions are equivalent:
    \begin{itemize}
        \item[(i)] $(X,G)$ is mean equicontinuous.
        \item[(ii)] $(X\times X,G)$ is pointwise uniquely ergodic
            and the map $(x,y)\mapsto\mu_{(x,y)}$ is continuous.
    \end{itemize}
\end{theorem}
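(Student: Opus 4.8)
The plan is to prove both implications by exploiting the pseudometric characterization of mean equicontinuity from Proposition~\ref{prop:equivalent char mean equicontiunity} together with the theory of topo-isomorphic extensions developed in Section~\ref{sec:topo-isomorphic extensions}. The key observation linking the two conditions is that, for any continuous $\varphi$ on $X$, the expression $\frac{1}{|F_n|}\int_{F_n}\varphi(t(x,y))\,dm(t)$ for a point $(x,y)$ in the product system is precisely an average of a continuous function against the product orbit, so unique ergodicity on product orbit closures turns all such limits into genuine limits that are moreover given by integration against $\mu_{(x,y)}$.

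For (i)$\Rightarrow$(ii): assume $(X,G)$ is mean equicontinuous. First I would observe that the product system $(X\times X,G)$ is again mean equicontinuous — this follows by applying Proposition~\ref{prop:equivalent char mean equicontiunity} to the product, since continuous functions on $X\times X$ are uniformly approximated by sums of products $f(x)g(y)$ and one can bound the relevant product pseudometric in terms of the $D_f$'s on the factors (alternatively, use that the MEF of the product is the product of the MEFs and invoke Theorem~\ref{thm:structural result mean equicontinuous systems}). Then Theorem~\ref{thm: decomposition theorem}(a), applied to $(X\times X,G)$, gives that $(X\times X,G)$ is pointwise uniquely ergodic. For the continuity of $(x,y)\mapsto\mu_{(x,y)}$: by the portmanteau-type criterion for weak-* convergence it suffices to check that $(x,y)\mapsto\mu_{(x,y)}(\psi)$ is continuous for each $\psi\in\mc C(X\times X)$; but by unique ergodicity on orbit closures and Theorem~\ref{thm:characterisation of unique ergodicity}, $\mu_{(x,y)}(\psi)=A(\Fol,\psi)(x,y)=\overline A(\Fol,\psi)(x,y)$ for any Følner sequence, and this last function is continuous by Proposition~\ref{prop:continuity-of-overline-A-and-underline-A} (applied to the product system, which is $\Fol$-mean equicontinuous).

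For (ii)$\Rightarrow$(i): assume $(X\times X,G)$ is pointwise uniquely ergodic with $(x,y)\mapsto\mu_{(x,y)}$ continuous. I would show $(X,G)$ is mean equicontinuous via the $D_f$-criterion of Proposition~\ref{prop:equivalent char mean equicontiunity}. Fix $f\in\mc C(X)$ and let $\psi(x,y)=|f(x)-f(y)|$, a continuous function on $X\times X$. For any Følner sequence $\Fol$, the average $A_n(\Fol,\psi)(x,y)$ converges, by unique ergodicity of $\overline{G(x,y)}$ and Theorem~\ref{thm:characterisation of unique ergodicity}, to $\mu_{(x,y)}(\psi)$, and this limit is independent of $\Fol$; hence $D_f(x,y)=\mu_{(x,y)}(\psi)$. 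Since $(x,y)\mapsto\mu_{(x,y)}$ is weak-* continuous and $\psi$ is continuous, the map $(x,y)\mapsto\mu_{(x,y)}(\psi)=D_f(x,y)$ is continuous, which is exactly condition (ii) of Proposition~\ref{prop:equivalent char mean equicontiunity}, so $(X,G)$ is mean equicontinuous.

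The main obstacle I anticipate is the step in (i)$\Rightarrow$(ii) asserting that the product of a mean equicontinuous system with itself is mean equicontinuous (equivalently, pointwise uniquely ergodic on the product). One has to be a little careful here: it is not a formal triviality that averages of an arbitrary $\psi\in\mc C(X\times X)$ behave well just because averages of $f\otimes g$ do, because passing to limits and to the supremum over Følner sequences must be controlled uniformly. The cleanest route is probably to note that the MEF of $(X\times X,G)$ is $(\mef\times\mef,G)$ and that $(X\times X,G)$ is a topo-isomorphic extension of it — this uses that a product of topo-isomorphies is a topo-isomorphy with respect to any invariant measure on the product, which in turn relies on a disintegration argument analogous to the one in the proof of Theorem~\ref{thm: mean equicontinuity implies topo-isomorphy} — and then invoke Theorem~\ref{thm:topo-isomorphy implies mean equicontinuity}. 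Getting this product-of-topo-isomorphies statement exactly right, especially for non-product invariant measures on $X\times X$, is where the real work lies.
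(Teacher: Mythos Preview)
Your overall strategy matches the paper's almost exactly: for (i)$\Rightarrow$(ii) the paper also passes to the product, invokes Theorem~\ref{thm: decomposition theorem}(a) for pointwise unique ergodicity, and then uses Proposition~\ref{prop:continuity-of-overline-A-and-underline-A} together with Theorem~\ref{thm:characterisation of unique ergodicity} to get continuity of $(x,y)\mapsto\mu_{(x,y)}$; for (ii)$\Rightarrow$(i) the paper likewise reads off continuity of the Besicovitch pseudometric from weak-$*$ continuity of $\mu_{(x,y)}$ (working directly with $D$ via the function $d(z,w)\in\mc C(X\times X)$ rather than through the $D_f$'s, but this is only cosmetic).

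The one place you diverge is the part you flag as the ``main obstacle,'' and here you are making an easy step hard. The paper disposes of ``$(X,G)$ mean equicontinuous $\Rightarrow$ $(X\times X,G)$ mean equicontinuous'' in a single clause (``This easily implies\ldots''), and the reason is elementary: equip $X\times X$ with the sum metric $d'((x_1,y_1),(x_2,y_2))=d(x_1,x_2)+d(y_1,y_2)$; then for any F{\o}lner sequence $\Fol$ one has $D'_\Fol((x_1,y_1),(x_2,y_2))=D_\Fol(x_1,x_2)+D_\Fol(y_1,y_2)$, hence $D'\leq D(x_1,x_2)+D(y_1,y_2)$, and mean equicontinuity of the product follows immediately from that of $(X,G)$. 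There is no need to approximate by tensor products, no need to identify the MEF of the product, and no need to prove that a product of topo-isomorphies is a topo-isomorphy for arbitrary invariant measures on $X\times X$ --- the last of which is indeed delicate and would be a significant detour.
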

\begin{proof}
    First, assume that $(X,G)$ is mean equicontinuous.
    This easily implies
    that the product system $(X\times X,G)$ is also mean equicontinuous.
    According to Theorem \ref{thm: decomposition theorem} (a), this in turn yields
    that $(X\times X,G)$ is pointwise uniquely ergodic.
    Now, consider some left Følner sequence $\Fol$.
    From the previous proposition    we have that for every
    $\varphi \in \mc C(X\times X)$   the functions
    $\underline A(\Fol,\varphi)(\cdot)$ and $\overline A(\Fol,\varphi)(\cdot)$ are
    continuous on $X\times X$.
    Hence, using Theorem \ref{thm:characterisation of unique ergodicity}, for a
    sequence of points $(x_n,y_n)\in X\times X$ converging to $(x,y)$ as $n\to\infty$
    we have
    \[
        \lim_{n\to\infty}\int\varphi\,d\mu_{(x_n,y_n)}
        =\lim_{n\to\infty}A(\Fol,\varphi)(x_n,y_n)
        =A(\Fol,\varphi)(x,y)=\int\varphi\,d\mu_{(x,y)}.
    \]
    Since $\varphi\in\mc C(X\times X)$ was arbitrary,
    $\mu_{(x_n,y_n)}$ converges weakly to $\mu_{(x,y)}$ as $n\to\infty$ .

    Regarding the opposite direction, consider $(x_n,y_n)\in X\times X$, $n\in\N$
    converging to $(x,y)$ as $n\to\infty$.
    By using Theorem \ref{thm:characterisation of unique ergodicity} again, observe that
    for each left Følner sequence $\Fol$
    \[
        \lim_{n\to\infty}D_\Fol(x_n,y_n)=\lim_{n\to\infty}\int d(z,w)\,d\mu_{(x_n,y_n)}(z,w)
        =\int d(z,w)\,d\mu_{(x,y)}(z,w)=D_\Fol(x,y).
    \]
    Therefore, $\lim_{n\to\infty}D(x_n,y_n)=D(x,y)$ and $(X,G)$ is mean equicontinuous.
\end{proof}

Let us conclude with a few comments on the natural question of why we have to formulate the assumptions
of Theorem~\ref{thm:equivalence criterion of mean equi. involving pointwise u.e}~(ii) for the product system.
Obviously, a system is
automatically pointwise uniquely ergodic if its product system has
this property (if additionally $(x,y)\mapsto\mu_{(x,y)}$ is
continuous, then $x\mapsto\mu_x$ is continuous as well).
However, the converse is not true.
For example, the product of a uniquely
ergodic weakly mixing system with itself is ergodic with respect to
the product measure. Hence, there are points whose orbit is dense in
the full product and hence supports the product measure as well as
the diagonal measure.

Furthermore, the next example shows that pointwise unique ergodicity
of the product system (and hence, of the original system) and
continuous dependence of the map $x\mapsto \mu_x$ does not imply continuity of
the map $(x,y)\mapsto\mu_{(x,y)}$.

\begin{example}
 Let $C\ssq \S^1$ be a Cantor set which does not contain rationals\footnote{For
 example, one may take a sufficiently small cover $U$ of the rationals
 (e.g., a cover $U$ with $m_{\S^1}(U)<1$) and set
 $C = U^c\setminus \mc I$ where $\mc I=\{x\in U^c\:\text{ there is } \eps>0
 \text{ such that } B_\eps(x)\cap U^c \text{ is at most countable}\}$.}
 and consider the skew-product $F\:C\times \S^1\to C\times \S^1:(x,\theta)\mapsto (x,\theta+x)$.
 Clearly, the corresponding $\Z$-action is pointwise uniquely ergodic (the unique
 invariant measure supported on the orbit closure of $(x,\theta)$ is given by
 $\mu_{(x,\theta)}=\delta_x\times m_{\S^1}$) and the map
 $(x,\theta)\mapsto \mu_{(x,\theta)}$ is continuous.
 Furthermore, the product system is topologically conjugate to
 \[
    \hat F\:C\times C\times \S^1\times \S^1\to C\times C\times \S^1\times \S^1\:
    (x_1,x_2,\theta_1,\theta_2)\mapsto (x_1,x_2,\theta_1+x_1,\theta_2+x_2)
 \]
 and still pointwise uniquely ergodic.
 However, the map $(x_1,x_2,\theta_1,\theta_2)\mapsto \mu_{(x_1,x_2,\theta_1,\theta_2)}$ cannot be continuous as this would
 imply mean equicontinuity (due to Theorem~\ref{thm:equivalence criterion of mean equi. involving pointwise u.e}) while
 the MEF of $(C\times \S^1,F)$ coincides with the identity on $C$ so that the corresponding factor map is not a topo-isomorphy.

 In fact, this can be seen explicitly: if $x_1,x_2\in C$ are rationally independent,
 then $\mu_{(x_1,x_2,\theta_1,\theta_2)}= \delta_{x_1}\times \delta_{x_2}\times m_{\S^2}$
 (independently of $\theta_1$ and $\theta_2$).
 This is, of course, true for a dense set of points in $C\times C\times \S^1\times\S^1$.
 However, given any $x_0\in C$, we clearly have
 $\mu_{(x_0,x_0,\theta_1,\theta_2)}=\delta_{x_0}\times \delta_{x_0} \times m_{\theta_1,\theta_2}$,
 where $m_{\theta_1,\theta_2}$ denotes the (one-dimensional) Lebesgue measure on
 the set $\{(x+\theta_1,x+\theta_2)\: x\in \S^1\}\ssq \S^2$.
 Obviously, $(x_1,x_2,\theta_1,\theta_2)\mapsto\mu_{(x_1,x_2,\theta_1,\theta_2)}$
 is not continuous in $(x_0,x_0,\theta_1,\theta_2)$ for $\theta_1,\theta_2\in \S^1$.
\end{example}

\section{Relating Besicovitch- and Weyl-mean equicontinuity}\label{sec:relation Besicovitch and Weyl}

By its very definition, Weyl-mean equicontinuity is a stronger
assumption than Besicovitch-$\Fol$-mean equicontinuity.
Quite remarkably, it turns out that Besicovitch-$\Fol$-mean equicontinuity, i.e.,
control over one Følner sequence $\Fol$ suffices to conclude Weyl-mean equicontinuity
in many situations.
A detailed study is given in this section and the presented results yield a proof
of Theorem \ref{thm:F-mean-equicontinuity implies mean-equicontinuity}.
By means of this result, we provide a non-trivial non-uniquely ergodic mean equicontinuous systems
at the end of this section.

In the following, we will also speak of $\Fol$-mean
equicontinuity with respect to a right Følner sequence $\Fol$,
where the definition is completely analogous to the definition using
left Følner sequences given in \eqref{def:Besicovitch-mean
equicontinuous}.

\begin{theorem}\label{thm:mean-equicontinuity on recurrent dynamics}
    Let $(X,G)$ be $\Fol$-mean equicontinuous for some left Følner sequence
    $\Fol$ and let there be a $G$-invariant measure $\mu$ with $\supp(\mu)=X$.
    Then $(X,G)$ is mean equicontinuous.
\end{theorem}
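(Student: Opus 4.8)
The plan is to leverage the product-system characterization of mean equicontinuity (Theorem~\ref{thm:equivalence criterion of mean equi. involving pointwise u.e}) rather than attack the $\sup$ over all Følner sequences directly. Starting from $\Fol$-mean equicontinuity for a single left Følner sequence $\Fol$, Proposition~\ref{prop:continuity-of-overline-A-and-underline-A} already gives that $\overline{A}(\Fol,\varphi)$ and $\underline{A}(\Fol,\varphi)$ are continuous for every $\varphi\in\mc C(X)$ — and likewise on $X\times X$, since $\Fol$-mean equicontinuity of $(X,G)$ passes to the product. The crux will be to upgrade this single-Følner information to genuine unique ergodicity of the relevant orbit closures. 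So the first real step is: \emph{use the full-support invariant measure $\mu$ to show that $(X,G)$ is uniquely ergodic, hence (by the continuity of the averages just mentioned and Theorem~\ref{thm:characterisation of unique ergodicity}) that $A(\Fol,\varphi)$ exists and is constant}. Indeed, if $\overline A(\Fol,\varphi)$ and $\underline A(\Fol,\varphi)$ are continuous and $\mu$ has full support, then integrating against $\mu$ and using invariance forces $\int \overline A(\Fol,\varphi)\,d\mu = \int\varphi\,d\mu = \int\underline A(\Fol,\varphi)\,d\mu$; combined with $\underline A \le \overline A$ pointwise, continuity, and $\supp(\mu)=X$, this yields $\overline A(\Fol,\varphi)=\underline A(\Fol,\varphi)$ everywhere, and a further argument along the same lines pins the common value to the constant $\mu(\varphi)$. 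This is exactly condition (ii) of Theorem~\ref{thm:characterisation of unique ergodicity}, so $(X,G)$ is uniquely ergodic.

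The second step repeats this on the product. Here the subtlety is that $X\times X$ need \emph{not} carry an invariant measure of full support, so we cannot conclude that $(X\times X,G)$ is uniquely ergodic — and indeed it generally is not. Instead, I would argue that $(X\times X,G)$ is \emph{pointwise} uniquely ergodic: for each $(x,y)\in X\times X$, the orbit closure $\overline{G(x,y)}$ carries some invariant measure, and by Proposition~\ref{prop:continuity-of-overline-A-and-underline-A} applied on $X\times X$ together with the construction of generic points (Theorem~\ref{thm: existence of generic points}), the continuity of $\overline A(\Fol,\varphi)$ and $\underline A(\Fol,\varphi)$ forces these two functions to be constant on each orbit closure, and equal there — which is precisely unique ergodicity of that orbit closure. (One should be mildly careful: continuity alone gives constancy on $\overline{G(x,y)}$ via the standard argument that $\overline A$ is $G$-superinvariant and $\underline A$ is $G$-subinvariant along orbits; then full-measure coincidence of $\overline A$ and $\underline A$ with respect to the invariant measure on $\overline{G(x,y)}$, plus continuity, gives them equal everywhere on $\overline{G(x,y)}$.) This delivers the first bullet of Theorem~\ref{thm:equivalence criterion of mean equi. involving pointwise u.e}(ii).

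For the continuity of $(x,y)\mapsto\mu_{(x,y)}$ — the second bullet — I would run the argument already used in the proof of Theorem~\ref{thm:equivalence criterion of mean equi. involving pointwise u.e}: for $\varphi\in\mc C(X\times X)$ and $(x_n,y_n)\to(x,y)$, we have $\int\varphi\,d\mu_{(x_n,y_n)} = A(\Fol,\varphi)(x_n,y_n) \to A(\Fol,\varphi)(x,y) = \int\varphi\,d\mu_{(x,y)}$, using the identification of $A(\Fol,\varphi)$ on an orbit closure with the integral against its unique invariant measure (Theorem~\ref{thm:characterisation of unique ergodicity}) and the continuity of $A(\Fol,\varphi)$ from Proposition~\ref{prop:continuity-of-overline-A-and-underline-A}. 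Since $\varphi$ is arbitrary this is weak-$*$ convergence, so the map is continuous. With both bullets of Theorem~\ref{thm:equivalence criterion of mean equi. involving pointwise u.e}(ii) in hand, that theorem gives mean equicontinuity and we are done.

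The step I expect to be the main obstacle is the second one: deducing \emph{pointwise} unique ergodicity of $X\times X$ from control over a \emph{single} left Følner sequence, in the absence of a full-support invariant measure on the product. The delicate point is to rule out that some orbit closure in $X\times X$ supports two distinct invariant measures while the $\Fol$-averages still converge; the resolution is that the continuity (hence constancy on orbit closures) of $\overline A(\Fol,\varphi)$ and $\underline A(\Fol,\varphi)$, for \emph{all} continuous $\varphi$, forces any invariant measure on a given orbit closure to assign the value $\overline A(\Fol,\varphi) = \underline A(\Fol,\varphi)$ to $\int\varphi$, leaving no room for a second measure. (Non-commutativity of $G$ does not cause trouble here because we only ever use the \emph{fixed} left Følner sequence $\Fol$, never a translate of it.)
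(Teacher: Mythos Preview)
Your overall strategy---verify the two bullets of Theorem~\ref{thm:equivalence criterion of mean equi. involving pointwise u.e}(ii) for the product---is exactly the paper's. But there are two genuine gaps.

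First, the claim that $(X,G)$ is uniquely ergodic is simply false: Example~\ref{ex:non-uniquely ergodic subshift} exhibits a mean equicontinuous $\Z$-action with a fully supported invariant measure and infinitely many ergodic measures. Your Fatou-type argument only gives $\int\underline A\,d\mu\le\int\varphi\,d\mu\le\int\overline A\,d\mu$, not equalities, and even if $\overline A=\underline A$ everywhere there is no reason for this common value to be constant. This step is both wrong and unnecessary.

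Second---and this is the real obstruction---you assert that $X\times X$ ``need not carry an invariant measure of full support'', and then try to recover pointwise unique ergodicity via invariance properties of $\overline A(\Fol,\varphi)$ and $\underline A(\Fol,\varphi)$ along orbits. Both halves fail. On the one hand, $\mu\times\mu$ \emph{does} have full support on $X\times X$, since $\supp(\mu\times\mu)=\supp(\mu)\times\supp(\mu)=X\times X$; this is precisely the observation that drives the paper's proof, which then invokes Theorem~\ref{thm: unique ergodicity on orbit closures of points in supp inv measure} on the product. On the other hand, your fallback argument breaks for non-abelian $G$: for a \emph{left} F{\o}lner sequence one has $\overline A(\Fol,\varphi)(sx)=\overline A(\Fol s,\varphi)(x)$ (cf.\ the computation in Proposition~\ref{prop:D-is-invariant}), so passing along an orbit forces you to change the F{\o}lner sequence, contrary to your closing parenthetical. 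There is no ``$G$-superinvariance'' of $\overline A(\Fol,\varphi)$ to appeal to. The paper circumvents this with a more delicate argument (Proposition~\ref{prop: support of ergodic measure is uniquely ergodic} and Theorem~\ref{thm: unique ergodicity on orbit closures of points in supp inv measure}): use generic points and a countable dense subgroup (Lemma~\ref{lem: sort of separability}) to get unique ergodicity on supports of ergodic measures, then approximate an arbitrary point of $\supp(\mu)$ by such points (Lemma~\ref{lem: ergodic representation}) and use that $\overline A(\Fol,\varphi)(g\,\cdot)$ is continuous for each fixed $g$.

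Your continuity argument for $(x,y)\mapsto\mu_{(x,y)}$ is fine once pointwise unique ergodicity is established, and matches the paper.
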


\begin{remark} A  comment on the assumption $\supp (\mu) =X$ may be in
order.
As is well known, every dynamical system $(X,G)$ possesses a
$G$-invariant measure $\mu$ of
\emph{maximal support}, that is, a measure $\mu$ such that $\supp(\mu)$ contains the
support of any other $G$-invariant measure. This support is
clearly unique and coincides with the closure of the union of all supports of ergodic
measures.
While in general, $\supp(\mu)$ may not fill the whole space $X$,
we can, of course, restrict attention to the maximal support and
then apply the above theorem.
By the Poincaré Recurrence Theorem, one may think of this as restricting to
the recurrent dynamics of the system $(X,G)$.
\end{remark}

Recall that for minimal dynamical systems every invariant measure
has full support.

\begin{corollary}
    If $(X,G)$ is minimal, then $\Fol$-mean equicontinuity
    for some left Følner sequence $\Fol$ implies mean equicontinuity.
\end{corollary}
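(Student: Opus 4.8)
The plan is to derive this immediately from Theorem~\ref{thm:mean-equicontinuity on recurrent dynamics}, whose hypotheses are: $\Fol$-mean equicontinuity for some left F\o lner sequence $\Fol$ (given here), together with the existence of a $G$-invariant measure $\mu$ of full support. So the only thing to check is that a minimal system automatically supplies such a measure.

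First I would invoke amenability of $G$: as recalled in Subsection~\ref{sec:Notation}, this guarantees that $(X,G)$ carries at least one $G$-invariant Borel probability measure $\mu$. Next I would observe that $\supp(\mu)$ is a non-empty closed $G$-invariant subset of $X$ (non-empty since $\mu$ is a probability measure, closed by definition of support, and $G$-invariant because $\mu$ is $G$-invariant and $G$ acts by homeomorphisms). By minimality of $(X,G)$, the only such set is $X$ itself, hence $\supp(\mu)=X$. This verifies the remaining hypothesis of Theorem~\ref{thm:mean-equicontinuity on recurrent dynamics}, and applying that theorem yields that $(X,G)$ is mean equicontinuous.

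There is no real obstacle here; the corollary is a direct specialization, and the two-line verification that minimality forces full support of every invariant measure is the entire content of the argument.
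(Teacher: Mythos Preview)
Your argument is correct and coincides with the paper's approach: the paper simply remarks that for minimal systems every invariant measure has full support and then states the corollary as an immediate consequence of Theorem~\ref{thm:mean-equicontinuity on recurrent dynamics}. Your explicit verification that $\supp(\mu)$ is a non-empty closed $G$-invariant set (hence all of $X$ by minimality) just spells out what the paper leaves implicit.
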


Next we will collect some further assertions needed for the proof of
Theorem~\ref{thm:mean-equicontinuity on recurrent dynamics}.
The following elementary lemma makes up for the (possible) lack of separability of $G$.
Recall that $G$ is assumed to be $\sigma$-compact.

\begin{lemma}\label{lem: sort of separability}
 Let $(X,G)$ be a dynamical system.
 Then there exists a countable subgroup $T\leq G$ such that
 $\overline{Tx}=\overline{Gx}$ for every $x\in X$.
\end{lemma}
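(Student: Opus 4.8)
The plan is to exploit $\sigma$-compactness directly. Write $G=\bigcup_{n\in\N}K_n$ with each $K_n$ compact; since $G$ is locally compact, we may also arrange that each $K_n$ has non-empty interior and $K_n\subseteq K_{n+1}$. The first step is to pick, for each $n$, a countable dense subset $S_n$ of $K_n$ (possible since compact metric — or at least second countable — pieces of $G$ are separable; if $G$ is not metrizable one uses instead that each $K_n$, being compact, is covered by finitely many translates of a shrinking neighbourhood basis at the identity and extracts a countable dense set that way). Set $S=\bigcup_n S_n$, a countable dense subset of $G$, and let $T=\langle S\rangle$ be the subgroup it generates; $T$ is still countable, and crucially $T$ is dense in $G$ because it contains the dense set $S$.

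The second step is to transfer denseness of $T$ in $G$ to the statement $\overline{Tx}=\overline{Gx}$ for every $x\in X$. The inclusion $\overline{Tx}\subseteq\overline{Gx}$ is trivial. For the reverse, fix $x\in X$ and $g\in G$; I want $gx\in\overline{Tx}$. Choose a net (or, using that $T\cap U\neq\emptyset$ for every neighbourhood $U$ of $g$, a suitable sequence or net) $t_i\in T$ with $t_i\to g$ in $G$. By joint continuity of the action $G\times X\to X$, we get $t_ix\to gx$ in $X$, hence $gx\in\overline{Tx}$. Therefore $Gx\subseteq\overline{Tx}$, and taking closures gives $\overline{Gx}\subseteq\overline{Tx}$, completing the equality.

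The one point deserving care — and the main (minor) obstacle — is the separability of each compact piece $K_n$ of $G$ when $G$ is merely locally compact $\sigma$-compact rather than metrizable. The clean way to sidestep any metrizability worry is: a locally compact $\sigma$-compact group is second countable if and only if it is metrizable, which need not hold here; but one does not actually need a countable dense subset of $G$ in the topological sense, only a countable set $S\subseteq G$ whose generated subgroup $T$ satisfies $\overline{Tx}=\overline{Gx}$ for all $x$ simultaneously. For this it suffices that $S$ be dense in $G$, and a $\sigma$-compact locally compact group is always \emph{separable}: each compact $K_n$ is covered, for every neighbourhood $V$ of the identity, by finitely many left translates $g_{n,V,1}V,\dots,g_{n,V,k}V$; running $V$ over a countable neighbourhood basis at the identity is not available without first countability, but one can instead invoke that $\sigma$-compact locally compact groups are Lindelöf, hence every open cover has a countable subcover, and feed in the cover of $G$ by all translates $gV$ (for a fixed $V$) to extract, for each $V$ in a cofinal family, a countable set; a standard diagonal argument then yields a single countable $S$ that is dense. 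I would state this separability as a short lemma or cite it as folklore, then proceed with $T=\langle S\rangle$ exactly as above. The rest is the two-line continuity argument already given.
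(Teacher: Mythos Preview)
Your approach has a genuine gap: you ultimately rely on $G$ being separable, and this is \emph{not} guaranteed by the standing hypotheses. A locally compact $\sigma$-compact (even compact) group need not be separable; for instance, the compact group $\{0,1\}^{\kappa}$ is not separable once $\kappa>\mathfrak{c}$. Your attempted workarounds --- extracting a countable dense subset via Lindel\"of plus a ``cofinal family'' of neighbourhoods of the identity, or a diagonal argument --- all tacitly invoke first countability of $G$, which is likewise not assumed. So the proof as written does not go through in the stated generality.

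You actually put your finger on the fix when you wrote that one does not need density in $G$ itself, only a countable $T$ with $\overline{Tx}=\overline{Gx}$ for all $x$; but you then abandoned that observation. The paper follows it through by shifting the approximation to the $X$ side, where compactness \emph{and} metrizability are available. Concretely: by joint continuity of the action and compactness of $X$, the map $g\mapsto(x\mapsto gx)$ from $G$ into $(\mc C(X,X),\|\cdot\|_\infty)$ is continuous, so each compact $K_n\subseteq G$ has compact --- hence totally bounded --- image. Thus for every $\eps>0$ one can pick a \emph{finite} $T_n\subseteq K_n$ with $\sup_{x\in X}d(sx,tx)<\eps$ for each $s\in K_n$ and some $t\in T_n$. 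Taking the union over $n\in\N$ and $\eps=1/m$ yields a countable $T'$, and the subgroup $T$ it generates satisfies $\overline{Tx}=\overline{Gx}$ for every $x$ by your own continuity argument. The point is that separability is borrowed from $X$, not from $G$.
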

\begin{proof}
 Since $G$ is $\sigma$-compact, there exists an exhausting sequence $(K_n)_{n\in\N}$
 of compact subsets of $G$.
 Given $\eps>0$, set $T_n\ssq K_n$ to be a finite subset such that for each
 $s\in K_n$ there is $t\in T_n$ with $\sup_{x\in X}d(sx,tx)<\eps$.
 Note that $T_n$ is well defined due to the continuity of the defining action of
 $(X,G)$ as well as the compactness of $K_n$ and $X$.
 Set $T^\eps\=\bigcup_{n\in\N} T_n$.
 Then $T'\=\bigcup_{n\in \N} T^{1/n}$ is countable and verifies
 $\overline{T'x}=\overline{Gx}$ for every $x\in X$.
 Letting $T$ be the group generated by $T'$ proves the statement.
\end{proof}

\begin{proposition}\label{prop: support of ergodic measure is uniquely ergodic}
    Suppose $(X,G)$ is $\Fol$-mean equicontinuous with respect to some  left
    Følner sequence $\Fol$.
    Then the support of each ergodic measure $\mu$ is uniquely ergodic.
\end{proposition}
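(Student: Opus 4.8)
The plan is to show that if $\mu$ is ergodic and $(X,G)$ is $\Fol$-mean equicontinuous (with $\Fol$ a left Følner sequence), then $(\supp\mu,G)$ is uniquely ergodic; equivalently, $(\supp\mu,G)$ admits no invariant measure other than $\mu$ itself. The natural route is via the characterization of unique ergodicity in Theorem~\ref{thm:characterisation of unique ergodicity}: it suffices to produce, for each $\varphi\in\mc C(\supp\mu)$, a (left) Følner sequence along which the Birkhoff averages converge to a constant, uniformly in $x\in\supp\mu$. By Theorem~\ref{thm: existence of generic points}, after passing to a subsequence $\Fol'$ of $\Fol$ we may assume $\mu$-a.e.\ point of $X$ is $\mu$-generic with respect to $\Fol'$; note that $\Fol'$-mean equicontinuity is inherited from $\Fol$-mean equicontinuity since the defining $\limsup$ only decreases under passing to subsequences — so $(X,G)$ is $\Fol'$-mean equicontinuous as well.

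The key step is then to upgrade "$\mu$-a.e.\ point is generic" to "every point of $\supp\mu$ is generic", using $\Fol'$-mean equicontinuity. Fix $\varphi\in\mc C(X)$ and $\eps>0$. By Proposition~\ref{prop:continuity-of-overline-A-and-underline-A}, the functions $\overline A(\Fol',\varphi)$ and $\underline A(\Fol',\varphi)$ are continuous on $X$. On the full-measure set of $\mu$-generic points, both equal $\mu(\varphi)$; since this set is dense in $\supp\mu$ (a set of full measure is dense in the support) and both functions are continuous, we conclude $\overline A(\Fol',\varphi)(x)=\underline A(\Fol',\varphi)(x)=\mu(\varphi)$ for \emph{every} $x\in\supp\mu$. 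Hence the limit $A(\Fol',\varphi)(x)$ exists and equals the constant $\mu(\varphi)$ for all $x\in\supp\mu$, which is precisely condition (ii) of Theorem~\ref{thm:characterisation of unique ergodicity} for the system $(\supp\mu,G)$ (restricting $\varphi$ to $\supp\mu$, or noting that an arbitrary continuous function on $\supp\mu$ extends to one on $X$ by Tietze). Therefore $(\supp\mu,G)$ is uniquely ergodic.

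The main obstacle, and the point deserving care, is the interplay between passing to the subsequence $\Fol'$ and the hypothesis: one must check that $\Fol'$-mean equicontinuity genuinely holds so that Proposition~\ref{prop:continuity-of-overline-A-and-underline-A} applies to $\Fol'$. This is immediate from $D_{\Fol'}(x,y)\le D_{\Fol}(x,y)$, which holds because $\Fol'$ is a subsequence of $\Fol$ and the defining quantity is a $\limsup$ along the sequence. A secondary technical point is that $\supp\mu$ is $G$-invariant and closed (since $\mu$ is invariant), so $(\supp\mu,G)$ is a genuine subsystem and Theorem~\ref{thm:characterisation of unique ergodicity} applies to it; and finally, the unique invariant measure on $\supp\mu$ produced this way must be $\mu$, since $\mu$ is itself an invariant measure supported there. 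This completes the argument.
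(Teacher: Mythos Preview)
Your proof is correct and follows essentially the same route as the paper's: pass to a subsequence $\Fol'$ with a full-measure set of $\mu$-generic points, invoke the continuity of $\overline A(\Fol',\varphi)$ and $\underline A(\Fol',\varphi)$ from Proposition~\ref{prop:continuity-of-overline-A-and-underline-A}, and then use density of the generic set in $\supp\mu$ to extend the identity $\overline A=\underline A=\mu(\varphi)$ to every point. The only difference is cosmetic: the paper manufactures a dense set of generic points via the countable-subgroup Lemma~\ref{lem: sort of separability}, whereas you simply observe that any set of full $\mu$-measure is dense in $\supp\mu$---a mild streamlining of the same argument.
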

\begin{proof}
    By possibly restricting to the support of $\mu$, we may assume without loss
    of generality that $X=\supp(\mu)$.
    By possibly going over to a subsequence of $\Fol$, we may further assume without loss
    of generality that there is a full measure set $X_\mu$ of $\mu$-generic points with respect to $\Fol$
    (see Theorem~\ref{thm: existence of generic points}).

    From Proposition \ref{prop:continuity-of-overline-A-and-underline-A}
    we  know that for each $\varphi\in\mc C(X)$ the maps
    $\overline{A}(\Fol,\varphi)(\cdot)$ and $\underline{A}(\Fol,\varphi)(\cdot)$
    are continuous.
    Hence, with $T$ as in Lemma~\ref{lem: sort of separability} and $x_0\in \bigcap_{t\in T} tX_\mu$
    we have that
    \[
        \overline{A}(\Fol,\varphi)(x)=\underline{A}(\Fol,\varphi)(x)=\mu(\phi),
    \]
    for all $x$ from the set $Tx_0\ssq X_\mu$.
    Note that $Tx_0$ is dense because of Lemma~\ref{lem: sort of separability} and the fact
    that $\mu$-generic points are transitive.
    By the continuity of $\overline{A}(\Fol,\varphi)(\cdot)$ and
    $\underline{A}(\Fol,\varphi)(\cdot)$, we get that ${A}(\Fol,\varphi)(x)$, in fact,
    exists and coincides with $\mu(\phi)$ for all $x\in X$.
    As $\varphi\in \mc C(X)$ was arbitrary, Theorem \ref{thm:characterisation of unique ergodicity}
    yields the unique ergodicity of $(X,G)$.
\end{proof}

\begin{theorem}\label{thm: unique ergodicity on orbit closures of points in supp inv measure}
    Suppose $(X,G)$ is $\Fol$-mean equicontinuous with respect to some  left
    Følner sequence $\Fol$.
    Consider a point $x\in \supp(\mu)$ where $\mu$ is an arbitrary $G$-invariant measure.
    Then the orbit closure $\overline{Gx}$ is uniquely ergodic.
\end{theorem}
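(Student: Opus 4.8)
The plan is to bootstrap from the previous proposition, which already handles the case where $\mu$ is ergodic (and $x\in\supp(\mu)$), to an arbitrary $G$-invariant measure $\mu$. The key tool is the ergodic representation encapsulated in Lemma~\ref{lem: ergodic representation}: any point $x\in\supp(\mu)$ must lie in the support of some \emph{ergodic} component of $\mu$. More precisely, first I would argue that if $x\in\supp(\mu)$, then every open neighbourhood $U$ of $x$ satisfies $\mu(U)>0$, so by Lemma~\ref{lem: ergodic representation} there is an ergodic $G$-invariant measure $\nu_U$ with $\nu_U(U)>0$. Running this over a countable neighbourhood basis $(U_k)_{k\in\N}$ of $x$ and taking a suitable convex combination $\nu\=\sum_k 2^{-k}\nu_{U_k}$ of the resulting ergodic measures — or, more cleanly, passing to the ergodic decomposition of $\mu$ — one finds that $x$ lies in the support of the closed set $\bigcup\{\supp(\nu):\nu\text{ ergodic component of }\mu\}$.

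The actual mechanism I would use is slightly different and avoids decomposition subtleties: I claim there exists a single ergodic measure $\nu$ with $x\in\supp(\nu)$. Indeed, consider the collection of closed sets $\supp(\nu)$ as $\nu$ ranges over ergodic measures; by Corollary~\ref{coro:decomposition}-style reasoning this need not cover $X$, but for $\mu$-a.e.\ point it does, and more to the point, since $x\in\supp(\mu)$ every neighbourhood of $x$ meets $\supp(\nu)$ for \emph{some} ergodic $\nu$. To get a single $\nu$, I would instead appeal directly to Proposition~\ref{prop: support of ergodic measure is uniquely ergodic}: that proposition tells us $\supp(\nu)$ is uniquely ergodic for \emph{every} ergodic $\nu$. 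Now observe that $\overline{Gx}$ is a closed invariant set; it carries at least one ergodic measure $\nu_0$ (as every orbit closure does), and $\supp(\nu_0)\ssq\overline{Gx}$. The issue is to rule out $\overline{Gx}$ carrying a \emph{second} ergodic measure $\nu_1$. If it did, then $\supp(\nu_0)$ and $\supp(\nu_1)$ are both uniquely ergodic (by the previous proposition), hence disjoint, and both contained in $\overline{Gx}$. But $x$ has dense orbit in $\overline{Gx}$, so I need to locate $x$ relative to these supports — and since $x\in\supp(\mu)$, the point $x$ is non-wandering in a strong sense, which should force $\overline{Gx}=\overline{G\,x'}$ for any $x'$ in a support of an ergodic measure sitting inside $\overline{Gx}$ near $x$.

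Making that last step precise is where I expect the main obstacle to lie. The clean route is: since $x\in\supp(\mu)$, pick any $x'\in\supp(\nu_0)$ (so $\overline{Gx'}=\supp(\nu_0)$ by unique ergodicity of $\supp(\nu_0)$). Using $\Fol$-mean equicontinuity, for $\varphi\in\mc C(X)$ the functions $\overline A(\Fol,\varphi)$ and $\underline A(\Fol,\varphi)$ are continuous on $X$ (Proposition~\ref{prop:continuity-of-overline-A-and-underline-A}); on $\supp(\nu_0)$ they are constant and equal to $\nu_0(\varphi)$ at generic points, hence constant $=\nu_0(\varphi)$ everywhere on $\supp(\nu_0)$ by density of generic points and continuity. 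Now I want to propagate this constancy to all of $\overline{Gx}$. Since $x\in\supp(\mu)$ and the ergodic decomposition of $\mu$ is supported (in the weak-* sense) on ergodic measures whose supports accumulate at $x$, and each such support is uniquely ergodic, a diagonal/limiting argument — choosing ergodic $\nu_k$ with $\supp(\nu_k)$ meeting $U_k\ni x$, noting $A(\Fol,\varphi)\equiv\nu_k(\varphi)$ on $\supp(\nu_k)$, and passing $x_k\to x$ with $x_k\in\supp(\nu_k)$ — yields $\overline A(\Fol,\varphi)(x)=\underline A(\Fol,\varphi)(x)=:c(\varphi)$, and then by invariance of these upper/lower averages along the orbit and continuity, $A(\Fol,\varphi)$ exists and equals $c(\varphi)$ on all of $\overline{Gx}$. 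Theorem~\ref{thm:characterisation of unique ergodicity} applied to the system $(\overline{Gx},G)$ then gives unique ergodicity of $\overline{Gx}$. The delicate point throughout is bookkeeping with the non-separable group $G$: as in Proposition~\ref{prop: support of ergodic measure is uniquely ergodic}, I would invoke Lemma~\ref{lem: sort of separability} to replace $G$ by a countable dense-orbit subgroup $T$ before taking any countable intersections of full-measure sets.
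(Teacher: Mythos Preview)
Your overall strategy---approximating $x$ by points $x_k\in\supp(\nu_k)$ with $\nu_k$ ergodic (Lemma~\ref{lem: ergodic representation}), using that $A(\Fol,\varphi)$ exists and is constant $=\nu_k(\varphi)$ on each $\supp(\nu_k)$ (Proposition~\ref{prop: support of ergodic measure is uniquely ergodic}), and passing to the limit via continuity of $\overline A(\Fol,\varphi)$ and $\underline A(\Fol,\varphi)$ (Proposition~\ref{prop:continuity-of-overline-A-and-underline-A})---is exactly the paper's approach. The false starts about locating a single ergodic $\nu$ with $x\in\supp(\nu)$, or about two disjoint supports inside $\overline{Gx}$, can be dropped; and the separability bookkeeping via Lemma~\ref{lem: sort of separability} is not needed here (no countable intersections of full-measure sets are taken).

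There is, however, a genuine gap in your final step. You write ``by invariance of these upper/lower averages along the orbit'', but for a \emph{left} Følner sequence $\Fol$ the functions $\overline A(\Fol,\varphi)$ and $\underline A(\Fol,\varphi)$ are in general \emph{not} $G$-invariant: one has $A_n(\Fol,\varphi)(gx)=A_n(\Fol g,\varphi)(x)$ (cf.\ the computation in Proposition~\ref{prop:D-is-invariant}), and $\Fol g$ is a different left Følner sequence. So establishing $\overline A=\underline A$ at the single point $x$ does not propagate along the orbit. The fix---and this is precisely what the paper does---is to run the limiting argument at each point $gx$ separately: since $gx_k\to gx$ and $gx_k\in\supp(\nu_k)$ (supports of invariant measures are $G$-invariant), continuity of $\overline A(\Fol,\varphi)(\,\cdot\,)$ yields
\[
\overline A(\Fol,\varphi)(gx)=\lim_k \overline A(\Fol,\varphi)(gx_k)=\lim_k\nu_k(\varphi)=\lim_k \underline A(\Fol,\varphi)(gx_k)=\underline A(\Fol,\varphi)(gx),
\]
and the middle limit is independent of $g$, giving both existence and constancy of $A(\Fol,\varphi)$ on $Gx$, hence on $\overline{Gx}$ by continuity. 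One should also invoke Tietze to extend $\varphi\in\mc C(\overline{Gx})$ to $\mc C(X)$ so that Proposition~\ref{prop:continuity-of-overline-A-and-underline-A} applies.
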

\begin{proof}
    By Theorem~\ref{thm:characterisation of unique ergodicity}, it suffices to
    show that ${A}(\Fol,\varphi)(\cdot)$ exists and is constant on $\overline{Gx}$
    for each $\phi\in \mc C\big(\overline{Gx}\big)$.
    In fact, by Tietze's Extension Theorem, it is enough to consider $\phi \in \mc C(X)$.
    Observe that by Lemma \ref{lem: ergodic representation} there is a sequence
    $(x_n)_{n\in\N}$ in $X$ with $x_n\to x$ for $n\to\infty$ such that each
    $x_n$ lies in the support of an ergodic measure.
    By Proposition~\ref{prop:continuity-of-overline-A-and-underline-A},
    the functions $\overline{A}(\Fol,\varphi)(\cdot)$ and
    $\underline{A}(\Fol,\varphi)(\cdot)$  and hence, $\overline{A}(\Fol,\varphi)(g\, \cdot)$
    and $\underline{A}(\Fol,\varphi)(g\, \cdot)$ are continuous for every $g\in G$,
    so that
    \[
        \overline{A}(\Fol,\varphi)(gx)=\lim_{n\to\infty} \overline{A}(\Fol,\varphi)(gx_n)=
        \lim_{n\to\infty} \underline{A}(\Fol,\varphi)(gx_n)=\underline{A}(\Fol,\varphi)(gx),
    \]
    where we used the unique ergodicity on ergodic components
    (Proposition~\ref{prop: support of ergodic measure is uniquely ergodic})
    in the second equality.
    This proves equality of $\overline{A}(\Fol,\varphi)(\cdot)$ and
    $\underline{A}(\Fol,\varphi)(\cdot)$ on $Gx$.
    Similarly, we see that $\overline{A}(\Fol,\varphi)(\cdot)$ and
    $\underline{A}(\Fol,\varphi)(\cdot)$ are constant on $Gx$.
    As both functions are continuous, this shows that ${A}(\Fol,\varphi)(\cdot)$
    exists and is constant on $\overline{Gx}$ for each $\phi\in\mc C(X)$.
\end{proof}

\begin{proof}[Proof of Theorem \ref{thm:mean-equicontinuity on recurrent dynamics}]
    By Theorem \ref{thm:equivalence criterion of mean equi. involving pointwise u.e},
    it suffices to show  that $(X\times X,G)$ is pointwise uniquely ergodic and that the
    map $(x,y)\mapsto \mu_{(x,y)}$ is continuous.
    To that end, we first note that with $(X,G)$ the product system
    $(X\times X,G)$ is $\Fol$-mean equicontinuous as well.
    Moreover, by the assumptions, the measure $\mu\times\mu$ has full support on $X\times X$
    which implies that $(X\times X,G)$ is pointwise uniquely ergodic, by
    Theorem \ref{thm: unique ergodicity on orbit closures of points in supp inv measure}.

    It remains to show the continuity of the map $(x,y)\mapsto\mu_{(x,y)}$.
    By pointwise unique ergodicity and Theorem \ref{thm:characterisation of unique ergodicity},
    we have for any $\varphi  \in\mc C(X\times X)$ that
    \[
        \mu_{(x,y)}(\phi)=\overline{A}(\Fol,\varphi)(x,y).
    \]
    Hence, the continuity follows from Proposition
    \ref{prop:continuity-of-overline-A-and-underline-A} applied to $(X
    \times X,G)$.
\end{proof}

In Theorem \ref{thm:mean-equicontinuity on recurrent dynamics} we
had to assume full support of the measure  to deduce mean
equicontinuity from $\Fol$-mean equicontinuity for some left Følner
sequence $\Fol$. This is not needed if we know that a system is
$\Fol$-mean equicontinuous for a right Følner sequence $\Fol$.
Details are discussed next.

\begin{proposition}\label{prop:transitivity implies unique ergodicity}
    Let $\Fol=(F_n)_{n\in\N}$ be a right Følner sequence so that $(X,G)$ is
    $\Fol$-mean equicontinuous.
    If $(X,G)$ is transitive, then it has a unique $G$-invariant measure $\mu$ and
    there is a subsequence $\Fol'=(F'_n)_{n\in\N}$ of $\Fol$ such that
    \[
        \lim\limits_{n\to\infty} \frac{1}{\abs{F'_n}}\int\limits_{F'_n}\varphi(tx)\,dm_r(t)
        =\mu(\varphi)\quad(x\in X),
    \]
    for each $\varphi\in\mc C(X)$.
\end{proposition}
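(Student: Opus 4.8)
The plan is to build a single right Følner subsequence $\Fol'$ of $\Fol$ along which, for every continuous function, the averages converge at \emph{every} point to the integral against one fixed ergodic measure; Proposition~\ref{prob:right Foelner implies unique ergodicity} then delivers both unique ergodicity and the asserted convergence simultaneously. To start, I would fix an ergodic $G$-invariant measure $\mu$ (one exists since the set of $G$-invariant measures is non-empty by amenability, and compact convex, hence has an extreme point). Applying the right Følner analogue of Theorem~\ref{thm: existence of generic points} — whose proof goes through verbatim, the Mean Ergodic Theorem being available along right Følner sequences — I pass to a subsequence $\Fol'=(F'_n)_{n\in\N}$ of $\Fol$ and obtain a Borel set $X_\mu$ with $\mu(X_\mu)=1$ such that $A(\Fol',\varphi)(x)=\mu(\varphi)$ for every $\varphi\in\mc C(X)$ and every $x\in X_\mu$. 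Note that $\Fol'$ is again a right Følner sequence and that $(X,G)$ is $\Fol'$-mean equicontinuous, since $D_{\Fol'}\le D_{\Fol}$; in particular $X_\mu\neq\emptyset$.

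Next I would record two properties of the $\Fol'$-averages. First, \emph{continuity}: for every $\varphi\in\mc C(X)$ the functions $\overline A(\Fol',\varphi)$ and $\underline A(\Fol',\varphi)$ are continuous on $X$, which is precisely Proposition~\ref{prop:continuity-of-overline-A-and-underline-A} (stated there for right Følner sequences as well). Second, \emph{asymptotic $G$-invariance}: using $\alpha(t)\alpha(g)=\alpha(tg)$ together with the right invariance of $m_r$ one computes
\[
    A_n(\Fol',\varphi)(gx)=\frac{1}{\abs{F'_n}}\int_{F'_n g}\varphi(sx)\,dm_r(s),
\]
whence $\abs{A_n(\Fol',\varphi)(gx)-A_n(\Fol',\varphi)(x)}\le\Abs{\varphi}_\infty\, m_r(F'_n g\,\triangle\, F'_n)/\abs{F'_n}\to 0$ by the right Følner property of $\Fol'$. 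Consequently $\overline A(\Fol',\varphi)(g\,\cdot)=\overline A(\Fol',\varphi)(\cdot)$ and likewise for $\underline A(\Fol',\varphi)$, i.e.\ both are $G$-invariant.

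To conclude, I would use that a continuous $G$-invariant function on a transitive system is constant (being constant on a dense orbit). Hence $\overline A(\Fol',\varphi)-\underline A(\Fol',\varphi)$ is a non-negative constant which vanishes on the non-empty set $X_\mu$, so $A(\Fol',\varphi)(x)$ exists for every $x\in X$; this limit function is again continuous and $G$-invariant, hence constant, and it takes the value $\mu(\varphi)$ on $X_\mu$. Therefore $\lim_{n\to\infty}\frac1{\abs{F'_n}}\int_{F'_n}\varphi(tx)\,dm_r(t)=\mu(\varphi)$ for all $x\in X$ and all $\varphi\in\mc C(X)$, and Proposition~\ref{prob:right Foelner implies unique ergodicity} gives that $\mu$ is the unique $G$-invariant measure, finishing the proof. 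The main obstacle — and the reason this works without the full-support hypothesis needed in Theorem~\ref{thm:mean-equicontinuity on recurrent dynamics} — is the second property above: it is the mechanism (genuinely special to right Følner sequences) that turns transitivity into constancy of the averaged function on all of $X$; everything else is bookkeeping and the routine transfer of earlier statements to right Følner sequences.
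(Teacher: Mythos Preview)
Your proof is correct. The core mechanism is identical to the paper's: continuity of $\overline A(\Fol',\varphi)$ and $\underline A(\Fol',\varphi)$ from Proposition~\ref{prop:continuity-of-overline-A-and-underline-A}, $G$-invariance of these functions from the right Følner property, constancy from transitivity, and the final appeal to Proposition~\ref{prob:right Foelner implies unique ergodicity}. Where you differ is in the selection of the subsequence $\Fol'$. The paper does not choose an ergodic measure in advance: it simply takes a dense sequence $(\varphi_k)$ in $\mc C(X)$ and runs a diagonal argument directly on the constants $\overline A(\Fol,\varphi_k)$, $\underline A(\Fol,\varphi_k)$ to force them to agree along $\Fol'$; the measure $\mu$ only appears at the end, supplied by Proposition~\ref{prob:right Foelner implies unique ergodicity}. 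You instead fix an ergodic $\mu$ first and invoke (the right Følner version of) Theorem~\ref{thm: existence of generic points} to produce $\Fol'$ together with a non-empty set of $\mu$-generic points, and then let continuity and invariance propagate the identity $A(\Fol',\varphi)=\mu(\varphi)$ from that set to all of $X$. Your route has the advantage of naming the limit from the outset; the paper's route avoids importing Theorem~\ref{thm: existence of generic points} and the choice of an ergodic measure, making the argument slightly more self-contained. Both are short and essentially equivalent in difficulty.
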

\begin{proof} Given $\phi\in \mc C(X)$, we know by
    Proposition~\ref{prop:continuity-of-overline-A-and-underline-A} that
    the maps $\overline{A}(\Fol,\varphi)(\cdot)$ and
    $\underline{A}(\Fol,\varphi)(\cdot)$ are continuous.
    Moreover, as $\Fol$ is a right Følner sequence, $\overline{A}(\Fol,\varphi)$
    and $\underline{A}(\Fol,\varphi)$ are invariant and hence--due to the
    transitivity of $(X,G)$--constant.

    Now, by the Stone-Weierstrass Theorem, $\mc C(X)$ is separable
    so that there exists a dense sequence of functions
    $(\varphi_n)_{n\in\N}$ in $\mc C(X)$.
    Observe that there is a subsequence $\Fol^1$ of $\Fol$ with
    $\overline{A}(\Fol^1,\varphi_1)=\underline{A}(\Fol^1,\varphi_1)$.
    Recursively, we obtain a subsequence $\Fol^{n+1}$ of $\Fol^n$ with
    $\overline{A}(\Fol^{n+1},\varphi_{n+1})=\underline{A}(\Fol^{n+1},\varphi_{n+1})$
    for each $n\in\N$.
    By setting $\Fol'=(\Fol^n_n)_{n\in\N}$, we eventually have a right Følner
    sequence $\Fol'$ with respect to which
    $\overline{A}(\Fol',\varphi_n)=\underline{A}(\Fol',\varphi_n)$
    for all $n\in \N$.
    As $\overline{A}(\Fol,\varphi)$ and $\underline{A}(\Fol,\varphi)$ depend
    continuously on $\varphi$, we have
    \[
        \overline{A}(\Fol',\varphi)=\underline{A}(\Fol',\varphi)=\text{const},
    \]
    for all $\varphi\in\mc C(X)$.
    By using Proposition \ref{prob:right Foelner implies unique ergodicity},
    we obtain the desired statement.
\end{proof}

\begin{theorem}\label{thm:mean-equicontinuity and right Foelner sequences}
    If  $(X,G)$ is $\Fol$-mean equicontinuous for some right Følner
    sequence $\Fol$, then $(X,G)$ is mean equicontinuous.
\end{theorem}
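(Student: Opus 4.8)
The plan is to deduce the statement from the product‑system characterization of mean equicontinuity, Theorem~\ref{thm:equivalence criterion of mean equi. involving pointwise u.e}. So assume $(X,G)$ is $\Fol$‑mean equicontinuous for a right Følner sequence $\Fol$. First I would note that the product system $(X\times X,G)$ is again $\Fol$‑mean equicontinuous (equip $X\times X$ with, say, the sum metric; the average of the product metric splits and one uses $\delta_{\eps/2}$), and that every orbit closure $\overline{G(x,y)}\ssq X\times X$ is transitive. Hence Proposition~\ref{prop:transitivity implies unique ergodicity} applies to each $\overline{G(x,y)}$ and gives that $(X\times X,G)$ is pointwise uniquely ergodic; by the same argument $(X,G)$ is too.

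It remains to show that $(x,y)\mapsto\mu_{(x,y)}$ is weak‑$*$ continuous, and here I would argue through the averaging functions. By Proposition~\ref{prop:continuity-of-overline-A-and-underline-A} (applied to $(X\times X,G)$ and $\Fol$), the maps $\overline A(\Fol,\varphi)$ and $\underline A(\Fol,\varphi)$ are continuous on $X\times X$ for every $\varphi\in\mc C(X\times X)$, and — exactly as in the proof of Proposition~\ref{prop:transitivity implies unique ergodicity}, using that $\Fol$ is a \emph{right} Følner sequence — they are $G$‑invariant, hence constant on each orbit closure. The crux is to upgrade Proposition~\ref{prop:transitivity implies unique ergodicity} from convergence along a \emph{subsequence} of $\Fol$ to convergence along $\Fol$ itself, equivalently to prove $\overline A(\Fol,\varphi)=\underline A(\Fol,\varphi)$ everywhere: on a fixed orbit closure $Z=\overline{G(x,y)}$ these are constants $c^{+}\ge c^{-}$; choosing a transitive point $z_{0}\in Z$ and a subsequence of $\Fol$ along which $A_n(\Fol,\varphi)(z_{0})\to c^{+}$, the elementary estimate $A_n(\Fol,\varphi)(gz)-A_n(\Fol,\varphi)(z)\to 0$ (a direct consequence of the right Følner property, as in the invariance computation above), together with density of $Gz_{0}$ in $Z$ and the $\eps$–$\delta$ control coming from $\Fol$‑mean equicontinuity, spreads this to $A_n(\Fol,\varphi)(z)\to c^{+}$ for \emph{every} $z\in Z$; the Fubini computation from the proof of Proposition~\ref{prob:right Foelner implies unique ergodicity}, carried out against the unique invariant measure $\mu_{Z}=\mu_{(x,y)}$ already at our disposal, then forces $c^{+}=\mu_{(x,y)}(\varphi)$, and symmetrically $c^{-}=\mu_{(x,y)}(\varphi)$. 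Thus $\mu_{(x,y)}(\varphi)=\overline A(\Fol,\varphi)(x,y)$, a continuous function of $(x,y)$, and since $\varphi$ was arbitrary the map $(x,y)\mapsto\mu_{(x,y)}$ is continuous.

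Putting the two halves together, $(X\times X,G)$ is pointwise uniquely ergodic and $(x,y)\mapsto\mu_{(x,y)}$ is continuous, so Theorem~\ref{thm:equivalence criterion of mean equi. involving pointwise u.e} yields that $(X,G)$ is mean equicontinuous. I expect the main obstacle to be the middle paragraph: Proposition~\ref{prop:transitivity implies unique ergodicity} as stated only produces a \emph{subsequence} of $\Fol$ along which Birkhoff averages converge, whereas the continuity argument needs convergence to the correct value along $\Fol$ at every point; what makes this work here — and what distinguishes the situation from the left Følner case of Theorem~\ref{thm:mean-equicontinuity on recurrent dynamics}, where a full‑support hypothesis was needed — is precisely that right Følner sequences are compatible with the action on orbit closures (invariance of $\overline A(\Fol,\varphi)$, the vanishing of $A_n(\Fol,\varphi)(g\,\cdot)-A_n(\Fol,\varphi)(\cdot)$), so the spreading argument runs on each transitive orbit closure with no assumption on supports.
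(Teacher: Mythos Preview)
Your argument is correct and follows exactly the route the paper intends --- reduce to the product criterion (Theorem~\ref{thm:equivalence criterion of mean equi. involving pointwise u.e}), obtain pointwise unique ergodicity of $(X\times X,G)$ from Proposition~\ref{prop:transitivity implies unique ergodicity} applied to each orbit closure, and read off continuity of $(x,y)\mapsto\mu_{(x,y)}$ from the continuity of $\overline A(\Fol,\varphi)$ --- which the paper compresses into the single remark that, given Proposition~\ref{prop:transitivity implies unique ergodicity}, ``the proof is almost literally the same as the one of Theorem~\ref{thm:mean-equicontinuity on recurrent dynamics}''. Your middle paragraph is in fact more careful than the paper at one point: the identity $\mu_{(x,y)}(\varphi)=\overline A(\Fol,\varphi)(x,y)$ used in the proof of Theorem~\ref{thm:mean-equicontinuity on recurrent dynamics} is justified there via Theorem~\ref{thm:characterisation of unique ergodicity}, which is stated only for \emph{left} F\o lner sequences, and your spreading argument (right-F\o lner asymptotic invariance of $A_n(\Fol,\varphi)$ along the orbit, density, and then the Fubini computation against $\mu_{(x,y)}$ to pin down $c^{+}=c^{-}=\mu_{(x,y)}(\varphi)$) is precisely what is needed to establish it for a right F\o lner $\Fol$.
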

\begin{proof}
Given the preceding result, the proof is almost literally the same
as the one of Theorem \ref{thm:mean-equicontinuity on recurrent dynamics}.
\end{proof}

Recall that a Følner sequence $\Fol$ is \emph{two-sided} if it is a left and right
Følner sequence (this implies in particular that $G$ is unimodular).
Clearly, if $G$ is abelian, every Følner sequence is two-sided.
We immediately obtain the following corollaries.

\begin{corollary}
    Suppose $G$ is unimodular and let $(X,G)$ be $\Fol$-mean equicontinuous for
    a two-sided Følner sequence $\Fol$.
    Then $(X,G)$ is mean equicontinuous.
\end{corollary}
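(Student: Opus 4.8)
The plan is to reduce the statement immediately to Theorem~\ref{thm:mean-equicontinuity and right Foelner sequences}, whose hypothesis involves a \emph{right} Følner sequence. The only point to check is that, under unimodularity, the notion of $\Fol$-mean equicontinuity used in the corollary (which refers to the left Haar measure, as in \eqref{def:Besicovitch-mean equicontinuous}) literally coincides with the analogous notion formulated with the right Haar measure.

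First I would recall that $G$ being unimodular means precisely that its modular function $\Delta$ (as introduced in the proof of Proposition~\ref{prop:D-is-invariant}) is identically $1$, equivalently that the left Haar measure $m$ and the right Haar measure $m_r$ agree up to a positive multiplicative constant; normalizing, we may take $m = m_r$. Hence for every compact $F\ssq G$ of positive Haar measure and every bounded Haar measurable $g\: G\to[0,\infty)$ the two averages $\frac{1}{\abs{F}}\int_F g\,dm$ and $\frac{1}{\abs{F}}\int_F g\,dm_r$ coincide. Applying this with $g(t)=d(tx,ty)$ shows that the pseudometric $D_\Fol$ defined in \eqref{def:Besicovitch-mean equicontinuous} via the left Haar measure equals the one defined via the right Haar measure. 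In particular, the hypothesis of the corollary is unchanged if we read it with the right Haar measure.

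Next I would observe that a two-sided Følner sequence $\Fol$ is, by definition, in particular a right Følner sequence. Combining this with the previous paragraph, the assumption that $(X,G)$ is $\Fol$-mean equicontinuous is exactly the assumption that $(X,G)$ is $\Fol$-mean equicontinuous for the right Følner sequence $\Fol$ in the sense used in Theorem~\ref{thm:mean-equicontinuity and right Foelner sequences}. That theorem then applies verbatim and yields that $(X,G)$ is mean equicontinuous.

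There is essentially no obstacle here: all the substantial work is already contained in Theorem~\ref{thm:mean-equicontinuity and right Foelner sequences} (and, upstream, in Proposition~\ref{prop:transitivity implies unique ergodicity} together with Theorem~\ref{thm:equivalence criterion of mean equi. involving pointwise u.e}). The corollary is merely the bookkeeping remark that unimodularity collapses the distinction between left and right Haar measure, so that a two-sided Følner sequence may be inserted into the right-Følner version of the theorem.
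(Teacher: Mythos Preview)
Your proposal is correct and matches the paper's approach: the paper states this corollary without proof, treating it as an immediate consequence of Theorem~\ref{thm:mean-equicontinuity and right Foelner sequences}, and your argument is precisely the natural one-line unpacking of that word ``immediate''. The only additional content you supply is the explicit observation that unimodularity identifies the left and right Haar measures so that the two versions of $D_\Fol$ coincide, which is exactly the point the paper leaves implicit.
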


\begin{corollary}\label{cor:abelian}
    If $G$ is abelian and $(X,G)$ is $\Fol$-mean equicontinuous for some Følner
    sequence $\Fol$, then $(X,G)$ is mean equicontinuous.
\end{corollary}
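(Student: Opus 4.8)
The plan is to observe that for abelian $G$ there is nothing left to do: the distinction between left and right Følner sequences disappears entirely, so the statement is an immediate special case of Theorem~\ref{thm:mean-equicontinuity and right Foelner sequences}. Concretely, since $G$ is abelian it is unimodular (its modular function is identically $1$) and left translation coincides with right translation, so the Følner condition \eqref{eq:Foelner_seq} and its right-sided analogue are literally the same statement. Hence any Følner sequence $\Fol$ witnessing $\Fol$-mean equicontinuity of $(X,G)$ is in particular a \emph{right} Følner sequence, and Theorem~\ref{thm:mean-equicontinuity and right Foelner sequences} applies verbatim to yield mean equicontinuity. One could equally well route the argument through the preceding (unimodular, two-sided) corollary, since every Følner sequence in an abelian group is two-sided.

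There is no genuine obstacle; all the substance sits in Proposition~\ref{prop:transitivity implies unique ergodicity} and Theorem~\ref{thm:mean-equicontinuity and right Foelner sequences}, whose proofs already handle the transitivity-to-unique-ergodicity passage that replaces the full-support hypothesis of Theorem~\ref{thm:mean-equicontinuity on recurrent dynamics}. If instead one wanted a self-contained argument tailored to the abelian case — which, as noted in the introduction, is algebraically simpler — the natural route would be: first note that $D_\Fol$ is itself $G$-invariant for purely algebraic reasons when $G$ is abelian; then reprove unique ergodicity on the support of an ergodic measure and on orbit closures of points in $\supp(\mu)$ exactly as in Proposition~\ref{prop: support of ergodic measure is uniquely ergodic} and Theorem~\ref{thm: unique ergodicity on orbit closures of points in supp inv measure} (using that $\mu$-generic points are transitive); and finally invoke the product-system criterion of Theorem~\ref{thm:equivalence criterion of mean equi. involving pointwise u.e} applied to $(X\times X,G)$. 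But for the purposes of this corollary, quoting Theorem~\ref{thm:mean-equicontinuity and right Foelner sequences} is the shortest and cleanest path, and that is what I would write.
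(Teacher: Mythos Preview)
Your proposal is correct and matches the paper's own argument: the paper simply notes that for abelian $G$ every Følner sequence is two-sided (in particular, right), so the result follows immediately from Theorem~\ref{thm:mean-equicontinuity and right Foelner sequences} (or equivalently from the preceding two-sided corollary).
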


\begin{proof}[Proof of Theorem
\ref{thm:F-mean-equicontinuity implies mean-equicontinuity}] This
theorem is now an immediate consequence of Theorem
\ref{thm:mean-equicontinuity on recurrent dynamics} and Corollary
\ref{cor:abelian}.
\end{proof}

Last, we would like to address the question of whether there are non-trivial non-uniquely ergodic
mean equicontinuous systems (that is, non-uniquely ergodic mean equicontinuous systems
which are neither finite unions of uniquely ergodic systems nor products of such).
The following example demonstrates that such non-trivial neither minimal nor uniquely
ergodic systems exist.

\begin{example}\label{ex:non-uniquely ergodic subshift}
 Given a sequence $x=(x_k)_{k\in\Z}\in \{0,1\}^\Z$ and $p\in \N$, let us set
 the \emph{$p$-periodic part} of $x$ to be $\Per(x,p)\=\{k\in\Z\;|\;x_k=x_{k+np} \ (n\in \Z) \}$.
 We put $\mc T$ to be the closure of
 \[
    \mc T'\=\left\{x\in \{0,1\}^\Z\;|\;\emptyset\neq \Per(x,2^n)\subsetneq \Per(x,2^{n+1}) \ (n\in \N)\right\}
 \]
 in $\{0,1\}^\Z$ (equipped with the product topology).
 Observe that for every $x\in \mc T'$ and each $n\in \N$, we have that there is
 exactly one $k\in[0,2^n-1]\setminus\Per(x,2^n)$.

 Clearly, $\mc T$ is $\sigma$-invariant where $\sigma:\{0,1\}^\Z\to\{0,1\}^\Z$
 denotes the left shift.
 We show that $(\mc T,\sigma)$ is mean equicontinuous by proving that it is
 $\Fol$-mean equicontinuous for $\Fol=([0,2^{n}-1])_{n\in \N}$, see Corollary \ref{cor:abelian}.
 To that end, define $D^n$ to be the pseudometric given  by
 \[
    D^n(x,y)\=1/2^n\cdot \sum_{\ell=0}^{2^n-1}d(\sigma^\ell(x),\sigma^\ell(y)),
 \]
 where we consider $d$ to be the \emph{Cantor metric}
 with $d(x,y)\=2^{-\min\{|k|\;|\;k\in \Z \text{ and } x_k\neq y_k\}}$.
 By definition, $\limsup_{n\to\infty} D^n(x,y)=D_\Fol(x,y)$ for $x,y\in \{0,1\}^\Z$.

 Now, given $x_1,x_2\in \mc T'$ with $d(x_1,x_2)\leq 2^{-2^n}$, observe that there are
 at most two elements in $[0,2^n-1]\setminus (\Per(x_1,2^n)\cup \Per(x_2,2^n))$
 so that
 \begin{align*}
 D^k(x_1,x_2)&=1/2^{k-n}\cdot\sum\limits_{m=0}^{2^{k-n}-1} D^n(\sigma^{m\cdot 2^n}(x_1),\sigma^{m\cdot 2^n}(x_2))\\
 &\leq \max_{m=0,\ldots,2^{k-n}-1} D^n(\sigma^{m\cdot 2^{n}}(x_1),\sigma^{m\cdot 2^{n}}(x_2))\leq
 1/2^{n}\cdot 4\cdot \sum_{\ell=0}^{\infty}2^{-\ell}= 2^{-n+3},
 \end{align*}
 for all $k\geq n$.
 Now, given $y\in \mc T$, let $(x_n)_{n\in\N}$ be a sequence in $\mc T'$ with $d(x_n,y)\leq 2^{-2^{n}}$.
 Observe that $D^n(x_n,y)\leq 1/2^n\cdot \sum_{\ell=1}^{2^{n}}2^{-\ell}\leq 2^{-n}$ as well as
 $d(x_n,x_k)\leq 2^{-2^{n}}$ for $k\geq n$.
 Hence, $D^k(x_n,y)\leq D^k(x_n,x_k)+D^k(x_k,y)\leq 2^{-n+3}+2^{-k}$ for all $k\geq n$ so that
 $D_\Fol(x_n,y)\leq 2^{-n+3}$.
 This yields the $\Fol$-mean equicontinuity of $(\mc T,\sigma)$.
 Observe that $\mc T$ contains a dense set of points which are periodic with
 respect to $\sigma$ as well as a dense set of infinite (i.e., non-periodic)
 subshifts (in fact, regular Toeplitz subshifts).
\end{example}


\section{Mean equicontinuity and discrete  spectrum}
\label{sec:mean equicontinuity and pure point spectrum}

In this section we establish a relation between mean equicontinuity and discrete spectrum.
A dynamical system $(X,G)$ together with an invariant measure $\mu$
is said to have \emph{discrete spectrum} if $L_2 (X,\mu)$ can be
written as an orthogonal sum of finite dimensional, $G$-invariant subspaces $V_\alpha$, where
$\alpha$ runs through some index set, see \cite{MacKey1964} for further details.
As before, we  will denote by $(\mef, G)$ the maximal equicontinuous factor of $(X,G)$
and by $\pi:X\to\mef$ a corresponding factor map.

We will need the following well-known fact which follows from the general theory
of Ellis semigroups of equicontinuous systems (see, for example, \cite[pp.~52--53]{Auslander1988}):
if $(\T',G)$ is minimal and equicontinuous, then $\T'$ is homeomorphic to a
\emph{homogeneous space}, that is, there is a compact group $E(\T')$ and a closed
subgroup $F\leq E(\T')$ (in general not normal) such that
$\T'$ is homeomorphic to the set of left cosets $E(\T')/F$.
If $G$ is abelian, then $E(\T')$ is abelian and $\T'$ is homeomorphic to $E(\T')$.

\begin{theorem}\label{thm: unitary and pure spectrum}
    Suppose $(X,G)$ is minimal.
    Then, the following assertions are equivalent:
    \begin{itemize}
        \item[(i)] The system $(X,G)$ is mean equicontinuous.
        \item[(ii)]  $(X,G)$ is uniquely ergodic and, if $\mu$ denotes the unique
            invariant probability measure, then $L_2 (X,\mu)$ can be written as
            an orthogonal sum of finite dimensional, $G$-invariant subspaces
            $V_\alpha$, consisting of continuous functions ($\alpha$ runs through
            some index set $I$).
\end{itemize}
\end{theorem}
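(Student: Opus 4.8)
The plan is to establish the two implications separately, using in both directions the bridge between topo-isomorphy onto an equicontinuous factor and unitarity of the associated Koopman operator provided by Proposition~\ref{prop:topo-isomorphy-via-unitary-operators} and Theorem~\ref{thm:topo-isomorphy implies mean equicontinuity}.

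\smallskip

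\emph{(i)$\Rightarrow$(ii).} Assuming $(X,G)$ minimal and mean equicontinuous, I would first invoke Theorem~\ref{thm: mean equicontinuity implies topo-isomorphy} to get that $\pi\colon X\to\mef$ is a topo-isomorphy onto the maximal equicontinuous factor, which is minimal (as a factor of a minimal system) and equicontinuous. Corollary~\ref{coro:decomposition} then identifies the ergodic measures of $(X,G)$ with the minimal subsets of $\mef$; since $\mef$ is minimal there is just one, so $(X,G)$ is uniquely ergodic with invariant measure $\mu$, and $\nu\coloneqq\pi(\mu)$ is the unique invariant measure of $(\mef,G)$. Next I would use the homogeneous-space description recalled just before the theorem: $\mef$ is $G$-equivariantly homeomorphic to $E/F$ for a compact group $E=E(\mef)$ into which $G$ maps with dense image, and $\nu$ is the pushforward of normalized Haar measure, so $L_2(\mef,\nu)$ is the closed space of right-$F$-invariant vectors in $L_2(E)$. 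Peter--Weyl then decomposes $L_2(E)$ into the finite-dimensional isotypic components of the irreducible representations of $E$, each spanned by continuous matrix coefficients; intersecting with the right-$F$-invariant functions produces an orthogonal decomposition of $L_2(\mef,\nu)$ into finite-dimensional $E$-invariant --- hence, by density of $G$ in $E$, $G$-invariant --- subspaces of continuous functions on $\mef$. Finally, $U_\mu\colon L_2(\mef,\nu)\to L_2(X,\mu)$, $f\mapsto f\circ\pi$, is unitary (Proposition~\ref{prop:topo-isomorphy-via-unitary-operators}) and intertwines the Koopman representations, so it carries this decomposition to the required decomposition of $L_2(X,\mu)$; continuity of the pieces is preserved because $f\circ\pi$ is continuous whenever $f$ is.

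\smallskip

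\emph{(ii)$\Rightarrow$(i).} Here I would build an equicontinuous factor out of the given subspaces and then apply Theorem~\ref{thm:topo-isomorphy implies mean equicontinuity}. Let $\mu$ be the unique invariant measure and $L_2(X,\mu)=\bigoplus_\alpha V_\alpha$ with each $V_\alpha$ finite-dimensional, $G$-invariant and spanned by continuous functions (only countably many are nonzero, by separability of $L_2$). For each $\alpha$ I would fix an $L_2$-orthonormal basis $f_1^\alpha,\dots,f_{d_\alpha}^\alpha$ of $V_\alpha$ consisting of continuous functions; since the Koopman operators are unitary and preserve $V_\alpha$, one has $f_i^\alpha\circ g=\sum_j\rho_\alpha(g)_{ji}\,f_j^\alpha$ with $\rho_\alpha(g)$ unitary, so the continuous map $\Phi_\alpha\colon X\to\C^{d_\alpha}$, $x\mapsto(f_1^\alpha(x),\dots,f_{d_\alpha}^\alpha(x))$, satisfies $\Phi_\alpha(gx)=\rho_\alpha(g)^{\mathrm T}\Phi_\alpha(x)$. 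Its fibres being $G$-invariant, $Z_\alpha\coloneqq\Phi_\alpha(X)$ is a compact metric factor of $(X,G)$ on which $G$ acts by restrictions of the Euclidean isometries $\rho_\alpha(g)^{\mathrm T}$, hence $(Z_\alpha,G)$ is equicontinuous; the product $\prod_\alpha(Z_\alpha,G)$ with a standard product metric is equicontinuous as well, and $\Phi\coloneqq(\Phi_\alpha)_\alpha$ exhibits the compact $G$-invariant set $\mef'\coloneqq\Phi(X)$ as an equicontinuous factor. The coordinate functions on $\mef'$ pull back under $\Phi$ to the $f_i^\alpha$, so $\Phi^*\mc C(\mef')$ contains every $V_\alpha$ and is therefore dense in $L_2(X,\mu)$; thus the isometry $U_\mu\colon L_2(\mef',\Phi(\mu))\to L_2(X,\mu)$, $h\mapsto h\circ\Phi$, has dense and (being an isometry) closed range, so it is unitary. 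Since $(X,G)$ is uniquely ergodic this holds for every $G$-invariant measure, and Theorem~\ref{thm:topo-isomorphy implies mean equicontinuity} gives mean equicontinuity of $(X,G)$ (and that $\mef'$ is its MEF).

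\smallskip

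I expect the main obstacle to lie in (i)$\Rightarrow$(ii): one must make sure the homogeneous-space picture really furnishes $G$ with dense image in $E(\mef)$ and identifies $\nu$ with the Haar pushforward (so that $G$-invariance and $E$-invariance coincide, making the Peter--Weyl blocks genuinely $G$-invariant), and that the matrix coefficients descending to $E/F$ span exactly the right-$F$-invariant part of each isotypic component. The other recurring subtlety, present in both directions, is the bookkeeping around ``a subspace consisting of continuous functions'', which should be read as a subspace of $L_2$ all of whose classes admit a continuous representative --- precisely the property that survives composition with the continuous maps $\pi$ and $\Phi$. By contrast, the construction of $\mef'$ in (ii)$\Rightarrow$(i) and the fact that products of equicontinuous systems are equicontinuous are routine.
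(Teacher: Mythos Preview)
Your proof is correct and follows the same overall strategy as the paper in both directions. For (i)$\Rightarrow$(ii) the arguments coincide: unique ergodicity from the structure theory, then Peter--Weyl on the homogeneous space $\mef\cong E(\mef)/F$ transported to $L_2(X,\mu)$ via the unitary $U_\mu$. For (ii)$\Rightarrow$(i) there is only a cosmetic difference: the paper defines $G$-invariant continuous pseudometrics $d_\alpha(x,y)=\sup\{|f(x)-f(y)|:f\in V_\alpha,\ \|f\|_\infty=1\}$, forms the weighted sum $D'=\sum_\alpha c_\alpha d_\alpha$, and passes to the quotient $X/\{D'=0\}$, whereas you realize the same quotient concretely as the image of $\Phi=(\Phi_\alpha)_\alpha$ in a product of finite-dimensional unitary spaces. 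The resulting equicontinuous factor is the same in both cases (its fibres are exactly the sets on which every $f\in\bigcup_\alpha V_\alpha$ is constant), and both arguments finish by invoking Theorem~\ref{thm:topo-isomorphy implies mean equicontinuity}. One minor slip of language: the fibres of $\Phi_\alpha$ are not $G$-invariant as sets; what you need---and what your formula $\Phi_\alpha(gx)=\rho_\alpha(g)^{\mathrm T}\Phi_\alpha(x)$ already provides---is that $\Phi_\alpha$ intertwines the actions, i.e.\ the fibre \emph{partition} is $G$-invariant.
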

\begin{proof}
    (i)$\Rightarrow$(ii): Several results of the previous sections imply that every minimal mean
    equicontinuous system is uniquely ergodic.
    Let us hence denote by $\mu$ the unique $G$-invariant measure on $X$.
    Now, observe that if $L_2(\mef,\pi(\mu))$ can be decomposed
    as an orthogonal sum of finite dimensional $G$-invariant
    subspaces consisting of continuous functions, then this holds true for $L_2(X,\mu)$ as well.
    This follows from the unitarity of $U_\mu$ (defined as in \eqref{eq: defn unitary operator};
    see also Theorem \ref{thm:topo-isomorphy implies mean equicontinuity}
    and Proposition \ref{prop:topo-isomorphy-via-unitary-operators})
    and the fact that $U_\mu$ maps continuous functions to continuous functions (due
    to the continuity of $\pi$).

    Therefore, it suffices to find a corresponding decomposition
    of $L_2(\mef,\pi(\mu))$.
    If $\T$ is homeomorphic to the compact group $E(\T)$ from above, this decomposition is provided by the classical Peter-Weyl Theorem.
    In case that $\T$ is homeomorphic to a homogeneous space, the decomposition is obtained by a standard
    extension of the Peter-Weyl Theorem to homogeneous spaces.

    \smallskip

    (ii)$\Rightarrow$(i): For each $\alpha \in I$ we define the pseudometric
    $d_\alpha$ on $X$ via
    \[
        d_\alpha (x,y) \= \sup\{\abs{f(x) - f(y)} \,|\, f\in V_\alpha,\|f\|_\infty = 1\}.
    \]
    As $V_\alpha$ is finite dimensional and consists of continuous functions,
    each $d_\alpha$ is continuous.
    As $V_\alpha$ is $G$-invariant, each $d_\alpha$ is $G$-invariant.
    Further, observe that the separability of $L_2(X,\mu)$ implies that $I$ is countable.
    Thus, we may consider the pseudometric
    $D'=\sum_\alpha c_\alpha \cdot d_\alpha$, where $(c_\alpha)_{\alpha\in I}$ is
    some summable sequence of positive numbers.

    We can hence introduce an invariant and closed equivalence relation on $X$ by
    \[
        x\sim y :\Longleftrightarrow d_\alpha (x,y) = 0 \mbox{ for all }
        \alpha \in I\quad (\Longleftrightarrow D'(x,y)=0).
    \]
    Then $Y\=X /\!\!\sim$ is a compact space which we may consider equipped with the
    metric $D'$ in the obvious way.
    Further, $(Y,G)$ (where the action of $G$ on $Y$ is defined
    in the canonical way) is an isometric and hence equicontinuous
    factor, as $D'$ is $G$-invariant.
    Let $h : X\to Y$ be the factor map and note that
    $V : L^2 (Y,h(\mu))\to L^2 (X,\mu)$ with $Vf = f\circ h$ is unitary
    (as we only identify points which can not be
    distinguished by elements of the $V_\alpha$).
    Now, the application of Theorem \ref{thm:topo-isomorphy implies mean
    equicontinuity} yields (i).
\end{proof}

\begin{remark}\mbox{}
\begin{itemize}
    \item[(a)] The assumption of minimality of $(X,G)$ can be slightly weakened
        to  unique ergodicity or transitivity, where the latter implies the former due to Theorem~\ref{thm: decomposition theorem}.
        In fact, for (ii)$\Rightarrow$(i) we did not need the minimality of $(X,G)$.
        For (i)$\Rightarrow$(ii) note that $(\T,G)$ is still minimal (see
        Proposition~\ref{prop:hilfe}). 
    \item[(b)] The Peter-Weyl Theorem used in the proof of (i)$\Rightarrow$(ii)
        actually gives one more feature of the finite dimensional subspaces appearing in (ii).
        They can be assumed to be irreducible. Here, a $G$-invariant  subspace $V$
        of $L_2 (X,\mu)$ is called \emph{irreducible} if it can not be written as
        an orthogonal sum of two non-trivial $G$-invariant subspaces.
\end{itemize}
\end{remark}

In the case of abelian $G$, we obtain a somewhat stronger statement.
As this is of interest in various contexts, we include a discussion.

\begin{corollary}
    Let $G$ be abelian. Suppose $(X,G)$ is minimal.
    Then, the following assertions are equivalent:
    \begin{itemize}
        \item[(i)] The system $(X,G)$ is mean equicontinuous.
        \item[(ii)]  The system $(X,G)$ is uniquely ergodic and, if $\mu$ denotes
            the unique invariant probability measure, then $L^2 (X,\mu)$ has an
            orthonormal basis of continuous eigenfunctions.
\end{itemize}
\end{corollary}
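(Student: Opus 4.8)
The plan is to reduce everything to Theorem~\ref{thm: unitary and pure spectrum}. That result already shows, for a minimal system, that mean equicontinuity is equivalent to unique ergodicity together with the existence of a decomposition $L^2(X,\mu)=\bigoplus_{\alpha\in I}V_\alpha$ into finite-dimensional, $G$-invariant subspaces consisting of continuous functions (here $\mu$ is the unique invariant measure). Since minimality forces $\supp(\mu)=X$, it therefore suffices to prove that, for abelian $G$, such a decomposition exists precisely when $L^2(X,\mu)$ has an orthonormal basis of continuous eigenfunctions.

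One implication is trivial: a continuous eigenfunction $f$ to an eigenvalue $\xi\in\widehat G$ spans the one-dimensional subspace $\C f$, which is $G$-invariant because $f(g\,\cdot)=\xi(g)f$ and consists of continuous functions; hence an orthonormal basis of continuous eigenfunctions is, in particular, a decomposition of the type required by Theorem~\ref{thm: unitary and pure spectrum} (ii).

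For the converse, I would proceed as follows. For $g\in G$ let $W_g\colon L^2(X,\mu)\to L^2(X,\mu)$ be given by $(W_gf)(x)=f(gx)$; each $W_g$ is unitary since $\mu$ is $G$-invariant, and the family $\{W_g\}_{g\in G}$ is commutative since $G$ is abelian, with $W_gW_h=W_{gh}$. Fix $\alpha\in I$. As $V_\alpha$ is finite-dimensional and $G$-invariant, the restricted operators $W_g|_{V_\alpha}$ form a commuting family of unitaries on a finite-dimensional Hilbert space, so they admit a common orthonormal eigenbasis $f_{\alpha,1},\dots,f_{\alpha,k_\alpha}$ of $V_\alpha$. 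For each $f=f_{\alpha,j}$ there is a map $\xi_f\colon G\to\S^1$ with $W_gf=\xi_f(g)f$ for all $g\in G$, and $\xi_f$ is a homomorphism because $g\mapsto W_g$ is. Now $f$ and $W_gf$ are both continuous and coincide in $L^2$; as $\supp(\mu)=X$ they coincide everywhere, so $f(gx)=\xi_f(g)f(x)$ for all $x\in X$ and $g\in G$. The function $|f|$ is continuous and $G$-invariant, hence constant by minimality, and non-zero because $f\neq0$; fixing any $x_0\in X$ we get $\xi_f(g)=f(gx_0)/f(x_0)$, which is continuous in $g$ by joint continuity of the action and of $f$. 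Thus $\xi_f\in\widehat G$ and $f$ is a continuous eigenfunction. Finally, the union $\bigcup_{\alpha\in I}\{f_{\alpha,1},\dots,f_{\alpha,k_\alpha}\}$ is an orthonormal set (orthogonality across distinct $\alpha$ being built into the orthogonal sum) which spans $L^2(X,\mu)$ and consists of continuous eigenfunctions, as desired.

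The only slightly delicate point — and the one I would be most careful about — is the simultaneous diagonalization of $\{W_g|_{V_\alpha}\}_{g\in G}$, since $G$ may be uncountable: this is still legitimate because a commuting family of unitary operators on a finite-dimensional Hilbert space generates a commutative $\ast$-subalgebra of $\mathrm{End}(V_\alpha)$, which is unitarily equivalent to an algebra of diagonal matrices and hence yields a common eigenbasis. Both the upgrade from $L^2$-equality to pointwise equality and the continuity of the eigenvalues rest on $\supp(\mu)=X$, which holds by minimality.
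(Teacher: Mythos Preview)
Your argument is correct and essentially coincides with the \emph{alternative} route the paper indicates in one sentence (``one may also argue that the irreducible subspaces appearing in Theorem~\ref{thm: unitary and pure spectrum}~(ii) must be one-dimensional in the abelian case''): you simultaneously diagonalize the commuting unitaries $W_g|_{V_\alpha}$ and then verify that the resulting eigenvectors are genuine continuous eigenfunctions, using $\supp(\mu)=X$ to pass from $L^2$-identities to pointwise ones and minimality to see that $|f|$ is a non-zero constant. All of this is sound, including the remark that the uncountability of $G$ is harmless because the diagonalization happens in the finite-dimensional commutative $*$-algebra generated by the $W_g|_{V_\alpha}$.

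The paper's \emph{primary} approach is different: rather than diagonalizing inside the $V_\alpha$, it revisits the proof of Theorem~\ref{thm: unitary and pure spectrum} and notes that for abelian $G$ the MEF $\mef$ is itself a compact abelian group (not just a homogeneous space), so Pontryagin duality furnishes an orthonormal basis of continuous characters of $L^2(\mef,\pi(\mu))$, which the unitary $U_\mu$ then transports to an orthonormal basis of continuous eigenfunctions of $L^2(X,\mu)$. Your approach has the advantage of being entirely self-contained once Theorem~\ref{thm: unitary and pure spectrum} is granted, while the paper's route avoids the (admittedly elementary) post-processing of the $V_\alpha$ by appealing directly to the structure of the MEF.
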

\begin{proof} Clearly, condition (ii) of the present corollary  is stronger than condition
(ii) of the previous theorem. Thus, it suffices to show
(i)$\Rightarrow$(ii).
This can be seen  as in the proof of the
previous theorem after noting that $\T$ is homeomorphic to the compact group $E(\mef)$.
With this in mind, statement (ii) is a direct consequence of the duality theory for
compact abelian groups.
Alternatively, one may also argue that the
irreducible subspaces appearing in Theorem \ref{thm: unitary and
pure spectrum} (ii) must be  one-dimensional in the abelian case.
\end{proof}

\begin{remark}\label{rem:mean equicontinuity and diffraction measure}
The last three decades have seen tremendous interest in the field of
aperiodic order, also known as mathematical quasicrystals (see
\cite{BaakeGrimm2013,KellendonkLenzSavinien2015} for extensive
discussions).  The common way to model aperiodic order is via
dynamical systems over the group $\R^n$. In typical examples, these
systems will be uniquely ergodic and minimal. In any case, such a
system comes with a diffraction measure. As mentioned in the
introduction, a key effort is to show that the diffraction measure
is a pure point measure. This in turn has been proven to be
equivalent to discrete spectrum of the underlying dynamical system.
Hence, discrete spectrum is at the core of aperiodic order. In the
further analysis of the  diffraction measure, continuity of the
eigenfunctions turns out to play a role. Indeed, it is exactly under
this condition that a convincing positive answer to the so-called
Bombieri-Tayler Conjecture can be given \cite{Lenz2009} (see
\cite{Robinson1999} for related earlier results as well.) Given this
situation, the class of minimal uniquely ergodic systems with
discrete spectrum and continuous eigenfunctions (which is
characterized in the preceding corollary)  presents itself as a very
natural candidate for models of aperiodic order.
\end{remark}


\section{Non-abelian examples and conclusions}
\label{sec: examples and applications}

\subsection{Isometric subgroups of topological full groups}
\label{sec:isometric subgroups}

In this section, we provide means to construct new examples of (in particular, non-abelian)
mean equicontinuous group
actions by using suitable subgroups of the topological full group of known mean
equicontinuous systems.
Recall that the \emph{topological full group} $\tfg{(Z,G)}$ of a dynamical system $(Z,G)$
is the group of all homeomorphisms on $Z$ which
locally coincide with an element of $G$
equipped with the uniform topology.
For simplicity, let us restrict to systems where $Z$ is a Cantor space.
In this case, a homeomorphism $s:Z\to Z$ is an element of $\tfg{(Z,G)}$ if and only
if for every $z_0\in Z$ there is a clopen neighborhood $U$ of $z_0$ and an element
$g\in G$ such that $sz=gz$ for all $z\in U$.

We make use of the following structural result.
Recall that a group $G$ acts \emph{freely} on $Z$ if $gz=z$ for some
$z\in Z$ and $g\in G$ implies that $g$ is the identity.

\begin{theorem}[{\cite[Corollary 4.9]{CortezMedynets2016}}]\label{thm: tfg is amenable}
    Suppose $Z$ is a Cantor space and $G$ is countable.
    Further, assume $G$ acts minimally, equicontinuously and freely on $Z$.
    Then the topological full group $\tfg{(Z,G)}$ is amenable if and only if $G$ is
    amenable.
\end{theorem}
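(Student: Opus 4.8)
The plan is to realize $\tfg{(Z,G)}$ as a directed union of groups, each of which is a (finite direct product of subgroups of $G$)-by-(finite) extension, and then let amenability pass through unions and extensions.

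First I would fix convenient coordinates on $Z$. Since $(Z,G)$ is minimal and equicontinuous, $Z$ is homeomorphic to a homogeneous space $E/F$ of a compact group $E$ by a closed subgroup $F$, as recalled in Section~\ref{sec:mean equicontinuity and pure point spectrum}. Because $Z$ is totally disconnected, the identity component of $E$ is contained in the core of $F$, so after passing to a quotient we may assume $E$ is profinite. Writing $E = \varprojlim E/N_k$ over a decreasing sequence of open normal subgroups $N_k$, and letting $G_k$ be the $G$-stabilizer of the base point of the finite $G$-set $E/N_kF$, the density of $G$ in $E$ yields $G$-equivariant bijections $G/G_k \cong E/N_kF$ and hence
\[
    Z \;\cong\; \varprojlim_{k} G/G_k
\]
as $G$-spaces, where $(G_k)_k$ is a decreasing chain of finite-index subgroups of $G$ acting by left translation; freeness of the action forces in particular that this $G$-action is faithful. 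For $c\in G/G_n$ let $U_{n,c}\ssq Z$ be the corresponding basic clopen set; the $U_{n,c}$ form a partition $\mc P_n$ of $Z$, and $\mc P_{n+1}$ refines $\mc P_n$.

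With these coordinates the direction ``$\tfg{(Z,G)}$ amenable $\Rightarrow$ $G$ amenable'' is immediate: since the action is faithful, $g\mapsto(z\mapsto gz)$ embeds $G$ into $\tfg{(Z,G)}$, and subgroups of amenable groups are amenable. For the converse, suppose $G$ is amenable. By compactness of $Z$ (and the fact that the $U_{n,c}$ form a basis of clopen sets), every $s\in\tfg{(Z,G)}$ coincides, on each piece of some partition $\mc P_n$, with a single element of $G$; let $\Gamma_n\le\tfg{(Z,G)}$ be the set of such $s$. Then $\Gamma_n\le\Gamma_{n+1}$ and $\tfg{(Z,G)}=\bigcup_n\Gamma_n$ is a directed union, so since amenability is preserved under directed unions it suffices to show each $\Gamma_n$ is amenable. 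Each $s\in\Gamma_n$ permutes the finitely many pieces of $\mc P_n$, which gives a homomorphism $\rho_n\colon\Gamma_n\to\mathrm{Sym}(G/G_n)$ with finite image. Its kernel $N_n$ consists of those $s$ that preserve every $U_{n,c}$ and act on it by a group element stabilizing $U_{n,c}$ setwise — this element being unique by freeness, and composition being coordinatewise because the pieces are preserved. Hence $N_n\cong\prod_{c\in G/G_n}\mathrm{Stab}_G(U_{n,c})\cong G_n^{[G:G_n]}$, a finite direct product of conjugates of $G_n$ in $G$, which is amenable. Thus $\Gamma_n$ is an extension of a finite group by an amenable group, hence amenable, and therefore so is $\tfg{(Z,G)}$.

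The hard part is the first step and the bookkeeping it supports: establishing the odometer presentation $Z\cong\varprojlim G/G_k$ with its refining system of clopen $G$-partitions, and then verifying that $\tfg{(Z,G)}$ really is the directed union of the ``finite-level'' subgroups $\Gamma_n$, that $\rho_n$ is a homomorphism with kernel exactly $\prod_c\mathrm{Stab}_G(U_{n,c})$, and — crucially — that freeness is what makes the assignment $s\mapsto(\text{local group elements})$ well defined and hence the whole picture unambiguous. Everything after that is a two-line invocation of the stability of amenability under directed unions and under extensions by finite groups.
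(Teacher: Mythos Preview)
The paper does not supply its own proof of this statement: it is quoted verbatim as \cite[Corollary~4.9]{CortezMedynets2016} and used as a black box. So there is nothing in the paper to compare your argument against.

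That said, your proposal is essentially the argument that appears in the cited source, and it is correct. The reduction to a $G$-odometer $Z\cong\varprojlim_k G/G_k$ via the Ellis group is exactly how Cortez--Medynets organize the situation (this is the structural heart of their paper), and your filtration $\tfg{(Z,G)}=\bigcup_n\Gamma_n$ with $\Gamma_n$ an extension of a subgroup of $\mathrm{Sym}(G/G_n)$ by $\prod_{c}\mathrm{Stab}_G(U_{n,c})$ is precisely their mechanism for transferring amenability. Two small points worth tightening if you write this up in full: (i) the fact that each $s\in\Gamma_n$ actually \emph{permutes} the pieces of $\mathcal P_n$ --- which you need for $\Gamma_n$ to be a subgroup and for $\rho_n$ to be a homomorphism --- relies on the specific feature of odometers that every $g\in G$ sends $U_{n,c}$ to $U_{n,gc}$; you invoke this implicitly but it deserves a sentence; and (ii) the passage from ``$E$ profinite'' to the chain $(G_k)_k$ uses that the image of $G$ in $E$ is dense (minimality), which you mention, together with $\bigcap_k N_kF=F$ so that the inverse limit really recovers $Z$. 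Your identification of freeness as the ingredient that makes the map $N_n\to\prod_c\mathrm{Stab}_G(U_{n,c})$ well defined and injective is exactly right and is where the hypothesis earns its keep.
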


Suppose we are in the situation of the previous statement, in particular, $G$ acts
equicontinuously on $(Z,d)$.
Without loss of generality we may assume that $G$ acts isometrically with respect
to $d$ (see Section~\ref{sec:Notation}).
We define the \emph{isometric subgroup} $\tfg{(Z,G)}_I\leq \tfg{(Z,G)}$ to be that
subgroup which comprises all elements of $\tfg{(Z,G)}$ that act isometrically on
$Z$ with respect to $d$.
Clearly, $\tfg{(Z,G)}_I$ is a closed subgroup of $\tfg{(Z,G)}$ and hence amenable
due to Theorem \ref{thm: tfg is amenable}.

Now, if $(X,G)$ is an extension of $(Z,G)$ via the factor map $h:X\to Z$ with
$Z$ a Cantor space, then $\tfg{(Z,G)}_I$ acts naturally on $X$: given
$s\in\tfg{(Z,G)}_I$ with a (finite) clopen partition $\{Z_i\}$ of $Z$ and
elements $\{g_i\}\subseteq G$ such that $s|_{Z_i}=g_i|_{Z_i}$, then let $sx=g_i x$
whenever $x\in h^{-1}(Z_i)$.
Furthermore, we immediately see that $h$ still is a factor map from $(X,\tfg{(Z,G)}_I)$ to
$(Z,\tfg{(Z,G)}_I)$.

\begin{theorem}\label{thm: isometric subgroups of topological full group}
    Suppose $(X,G)$ is mean equicontinuous and uniquely ergodic with a maximal equicontinuous
    factor $(\mef,G)$ where $\mef$ is a Cantor space.
    If $G$ is countable and acts freely on $\mef$, then we have that $(X,\tfg{(\mef,G)}_I)$
    is mean equicontinuous with MEF $(\mef,\tfg{(\mef,G)}_I)$.
\end{theorem}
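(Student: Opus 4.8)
The plan is to derive the statement from Theorem~\ref{thm:topo-isomorphy implies mean equicontinuity}, applied to the action of $H\=\tfg{(\mef,G)}_I$ on $X$ with $(\mef,H)$ as the candidate equicontinuous factor and $\pi\:X\to\mef$ as factor map. Several of the needed preliminaries are already recorded in the discussion preceding the theorem: $H$ is a locally compact $\sigma$-compact amenable group (being a closed subgroup of the topological full group $\tfg{(\mef,G)}$, which is amenable by Theorem~\ref{thm: tfg is amenable}); $H$ acts naturally on $X$, lifting its action on $\mef$ through $\pi$; and $\pi$ is a factor map from $(X,H)$ onto $(\mef,H)$. Moreover $(\mef,H)$ is equicontinuous, indeed isometric, since by the very definition of the isometric subgroup every element of $H$ acts on $\mef$ as an isometry with respect to $d$. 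So $(\mef,H)$ is an equicontinuous factor of $(X,H)$, and to apply Theorem~\ref{thm:topo-isomorphy implies mean equicontinuity} it remains only to check that for every $H$-invariant Borel probability measure $\mu$ on $X$ the operator $U_\mu\:L_2(\mef,\pi(\mu))\to L_2(X,\mu),\ f\mapsto f\circ\pi$, is unitary.

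The crux is the observation that $G$ embeds into $H$ in a way compatible with the actions on $X$. Since $G$ acts isometrically (after the normalization of $d$) and freely on $\mef$, the action is in particular faithful, so $g\mapsto(z\mapsto gz)$ is an injective homomorphism from $G$ into $H$; and using for $g\in G$ the trivial clopen partition $\{\mef\}$ labelled by $g$, one sees that the restriction of the $H$-action on $X$ to this copy of $G$ is exactly the original $G$-action. Consequently, any $H$-invariant measure $\mu$ on $X$ is $G$-invariant, and by unique ergodicity of $(X,G)$ it is then \emph{the} $G$-invariant measure. Since $(X,G)$ is mean equicontinuous, Theorem~\ref{thm: mean equicontinuity implies topo-isomorphy} shows that $\pi$ is a topo-isomorphy with respect to $\mu$ and $\pi(\mu)$, whence Proposition~\ref{prop:topo-isomorphy-via-unitary-operators} yields that $U_\mu$ is unitary.

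With the hypotheses of Theorem~\ref{thm:topo-isomorphy implies mean equicontinuity} verified for $(X,H)$ and the factor $(\mef,H)$, that theorem gives at once that $(X,H)$ is mean equicontinuous and that $(\mef,H)$ is its maximal equicontinuous factor, which is the assertion. I do not expect a genuine obstacle in this argument: the whole content is the elementary passage from $H$-invariance to $G$-invariance of measures, which reduces everything to the already-established fact that $(X,G)$ is a topo-isomorphic extension of its maximal equicontinuous factor; the only care needed is in checking that $(X,H)$ fits the standing framework, and this is covered by the setup introduced just before the theorem.
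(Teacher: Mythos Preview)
Your argument is correct and follows essentially the same route as the paper: both reduce to Theorem~\ref{thm:topo-isomorphy implies mean equicontinuity} by observing that every $H$-invariant measure on $X$ is already $G$-invariant (since $G$ sits inside $H$), so the topo-isomorphy of $\pi$ known from the $G$-action carries over. The only point you glossed over is that Theorem~\ref{thm: tfg is amenable} requires the $G$-action on $\mef$ to be \emph{minimal}, which is not among the stated hypotheses; the paper derives this from unique ergodicity of $(X,G)$ via Proposition~\ref{prop:hilfe}(b), and you should insert that one line rather than relying on the ``discussion preceding the theorem'', which assumed minimality rather than proved it.
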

\begin{proof}
    Since $(X,G)$ is uniquely ergodic, Proposition~\ref{prop:hilfe} yields that $(\T,G)$ is minimal.
    Hence, $(\mef,G)$ verifies the assumptions of Theorem~\ref{thm: tfg is amenable}.
    Clearly, every $\tfg{(\mef,G)}_I$-invariant measure on $X$ (on $\mef$) necessarily
    is also a $G$-invariant measure on $X$ (on $\mef$).
    Therefore, $(X,\tfg{(\mef,G)}_I)$ is a topo-isomorphic extension of the equicontinuous
    system $(\mef,\tfg{(\mef,G)}_I)$.
    By Theorem~\ref{thm:topo-isomorphy implies mean equicontinuity}, the statement follows.
\end{proof}

Last, we present a straightforward instructive application of
Theorem \ref{thm: isometric subgroups of topological full group}.
Let us point out that all the considerations in the following example directly
generalize to higher-dimensional odometers and associated regular
Toeplitz configurations, see \cite{Cortez2006}.

\begin{example}
    We assume that the reader is familiar with the theory of odometers/adding
    machines, see for instance \cite{Kurka2003,Downarowicz2005} for further information.
    We consider the dyadic odometer $(\mathbf{2}^\N,\Z)$.
    That is, $\mathbf{2}^\N$ is the compact group obtained as the inverse limit
    \[
        \mathbf{2}^\N\=\varprojlim\limits_{\ell \in \N}\Z/2^\ell\Z,
    \]
    and $n\in \Z$ acts on $\theta\in \mathbf{2}^\N$ by $\theta\mapsto\theta+n$,
    where we consider $n$ as an element of $\mathbf{2}^\N$.

    Now, the isometric subgroup $\tfg{(\mathbf{2}^\N,\Z)}_I$ contains, among others,
    the element $s$ given by
    \[
    s\theta=s(\theta_0,\theta_1,\ldots)\=
    \begin{cases}
      \theta & \text{ if } \theta_0=0,\\
      \theta+2 & \text{ if } \theta_0=1.
    \end{cases}
    \]
    Obviously, $s(\theta+1)\neq 1+s\theta$.
    Hence, $\tfg{(\mathbf{2}^\N,\Z)}_I$ is a non-abelian amenable group which acts
    mean equicontinuously (according to Theorem~\ref{thm: isometric subgroups of topological full group})
    on, in particular, the shift orbit closure of any regular
    Toeplitz sequence whose MEF is given by $(\mathbf{2}^\N,\Z)$ (for concrete
    examples, see also \cite{Kurka2003,Downarowicz2005}).
    Obviously, these orbit closures are Cantor spaces as well.
    To obtain examples where the domain is not totally disconnected, we can
    consider Auslander systems, see \cite{HaddadJohnson1997}, which also
    have odometers as their MEF and are mean equicontinuous.
\end{example}

\subsection{Irregular extensions}\label{sec: irregular extensions}

Suppose $(X,G)$ is an extension of $(Y,G)$ via the factor map
$h:X\to Y$. We say $(X,G)$ is a \emph{regular} extension of $(Y,G)$
if for every $G$-invariant measure $\mu$ on $X$ we have that
$h(\mu)(\{y\in h(X)\: \#h^{-1}(y)>1\})=0$; otherwise we say $(X,G)$ is
an \emph{irregular} extension.
Given $y\in Y$, we refer to $h^{-1}(y)$ as its \emph{fiber}.

Note that a regular extension is automatically a
topo-isomorphic extension. Examples of regular extensions of
equicontinuous systems are Sturmian subshifts, regular Toeplitz
subshifts and the Denjoy systems described in Example \ref{ex:Denjoy
homeomorphisms}.
There are also irregular topo-isomorphic extensions of equicontinuous systems.
The Cantor substitution subshift in Example
\ref{ex:Cantor substitution} is a transitive irregular extension of
the trivial system.
Minimal examples can be found in \cite{DownarowiczKasjan2015,DownarowiczGlasner2016},
where \cite[Example~5.1]{DownarowiczKasjan2015} has almost surely (with respect
to the unique invariant measure of its MEF) countable
fibers but still a residual set of points whose fibers are singletons.
In contrast, in the examples constructed in \cite[Section 3]{DownarowiczGlasner2016},
every fiber is uncountable.
Indeed, in this subsection, we will show that almost every fiber of an irregular
extension must be at least countable.
For the convenience of the reader, we provide a proof of the
next statement.

\begin{lemma}\label{lem: almost surely same cardinality in fibres}
    Let $(X,G)$ be an extension of $(Y,G)$ via the factor map $h:X\to Y$ and let
    $\mu$ be an ergodic $G$-invariant measure on $Y$.
    Suppose $h^{-1}(y)$ is finite for $\mu$-almost every $y\in Y$.
    Then there is $n_0\in\N$ such that $\mu$-almost everywhere we have $\#h^{-1}=n_0$.
\end{lemma}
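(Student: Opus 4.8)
The plan is to show that the function $y \mapsto \#h^{-1}(y)$, which is a well-defined (finite-valued) measurable function on a full-measure subset of $Y$, is almost everywhere invariant under the $G$-action, so that ergodicity of $\mu$ forces it to be constant $\mu$-almost everywhere. First I would fix a conull $G$-invariant Borel set $Y_0 \subseteq Y$ on which all fibers $h^{-1}(y)$ are finite; such a set exists because the finiteness hypothesis holds $\mu$-a.e.\ and one may intersect the given full-measure set with all its $G$-translates using a countable dense subgroup $T \leq G$ as in Lemma~\ref{lem: sort of separability} (which satisfies $\overline{Tx} = \overline{Gx}$, but here we only need that $T$ is countable and $T$-invariance of a conull set implies $G$-invariance, since the action is by measure-preserving maps and any $g$-translate agrees $\mu$-a.e.\ with the limit of $T$-translates).

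Next I would verify measurability of $N(y) \= \#h^{-1}(y)$ on $Y_0$. This is a standard fact: since $h$ is a continuous map between compact metric spaces, the set $\{(x,x') \in X \times X : h(x) = h(x')\}$ is closed, and for each $k \in \N$ the set $\{y : \#h^{-1}(y) \geq k\}$ is analytic (it is the projection to $Y$ of a Borel subset of $X^k$ describing $k$ distinct points with common image), hence universally measurable; intersecting with $Y_0$, where all fibers are finite, gives that $N$ is measurable with respect to the completion of $\mu$. The key point is then the invariance: for every $g \in G$, the homeomorphism $x \mapsto gx$ of $X$ maps $h^{-1}(y)$ bijectively onto $h^{-1}(gy)$, because $h$ is a factor map ($h(gx) = gh(x)$). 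Therefore $N(gy) = N(y)$ for all $y \in Y_0$ and all $g \in G$ — in fact this is an everywhere equality on $Y_0$, not merely an a.e.\ one.

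Finally, since $N$ is a measurable $G$-invariant function on the conull invariant set $Y_0$ and $\mu$ is ergodic, $N$ is $\mu$-almost everywhere equal to a constant $n_0 \in \N$ (the constant is finite because $N$ takes finite values on $Y_0$). Concretely, pick $n_0$ such that $\mu(\{y \in Y_0 : N(y) = n_0\}) > 0$; this set is $G$-invariant up to a null set (indeed exactly $G$-invariant on $Y_0$), so by ergodicity it has full measure. This is exactly the claimed statement.

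I do not anticipate a serious obstacle here; the only mildly delicate point is the measurability of $y \mapsto \#h^{-1}(y)$, which requires invoking that analytic sets are universally measurable (or, alternatively, a direct argument: $\{y : N(y) \geq k\}$ can be written using a countable basis of $X$ as a countable union/intersection of Borel sets, since $h^{-1}(y)$ finite lets one separate its points by basic open sets), and the routine bookkeeping of passing from a $\mu$-a.e.\ hypothesis to a genuinely $G$-invariant conull set. Both are handled by the tools already present in the paper (Lemma~\ref{lem: sort of separability} and standard descriptive set theory).
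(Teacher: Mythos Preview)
Your approach is correct and more direct than the paper's. You observe that $N(y)=\#h^{-1}(y)$ is \emph{everywhere} $G$-invariant (each $g$ induces a bijection $h^{-1}(y)\to h^{-1}(gy)$), establish its measurability, and conclude by ergodicity. The paper instead passes through Lusin's theorem: it views $y\mapsto h^{-1}(y)$ as an upper semi-continuous map into the hyperspace of compact subsets of $X$ with the Hausdorff metric, restricts to a positive-measure compact set $K$ on which this map is continuous, and then uses a local argument to push a lower bound $\#h^{-1}(y)\geq n_0$ from a point of $K$ to a full-measure set via invariance and ergodicity, followed by a separate boundedness step. Your route avoids the hyperspace and Lusin entirely. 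On measurability, note that $\{y:\#h^{-1}(y)\geq k\}$ is in fact $F_\sigma$: it is the image under the continuous map $(x_1,\ldots,x_k)\mapsto h(x_1)$ of the $F_\sigma$ set $\{(x_1,\ldots,x_k)\in X^k: h(x_i)=h(x_j)\text{ for all }i,j,\ x_i\neq x_j\text{ for }i\neq j\}$ in the compact space $X^k$, so no appeal to analytic sets is needed.

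One simplification: the first step of your plan, manufacturing a $G$-invariant conull $Y_0$ via Lemma~\ref{lem: sort of separability}, is unnecessary. Since $N$ is $G$-invariant everywhere, the set $\{y:N(y)<\infty\}$ is already $G$-invariant, and the a.e.\ finiteness hypothesis directly gives an invariant conull set. (Your sketched justification that $T$-invariance of a conull set forces $G$-invariance via ``limits of $T$-translates'' is not sound in this generality, so it is fortunate that the step can simply be dropped.)
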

\begin{proof}
    Observe that $h$ gives rise to an upper semi-continuous and hence Borel measurable
    map $\gamma$ from $Y$ to the space of compact subsets of $X$ (endowed with the
    Hausdorff metric), defined by $\gamma(y)=\pi^{-1}(y)$ for each $y\in Y$.
    By Lusin's Theorem, there is a compact set $K'\subseteq Y$ with $\mu(K')>0$
    such that $\gamma\left.\right|_{K'}$ is continuous.
    Set $Y'\=\{y\in Y\:\# h^{-1}(y)<\infty\}$.
    By the assumptions, $\mu(Y')=1$.
    Since $\mu$ is an inner regular measure, we may assume w.l.o.g.\ that $K'\ssq Y'$.
    Let $K\subseteq K'$ be the support of the measure $\mu|_{K'}$.
    Clearly, $\mu(K)>0$.

    Pick some $y_0\in K$ and set $n_0\=\#h^{-1}(y_0)$.
    By continuity of $\gamma$ on $K$, there is $\delta>0$ such that for all
    $y\in B_\delta(y_0)\cap K$ we have
    \[
        d(\gamma(y),\gamma(y_0))<\frac{1}{2}\cdot\min_{x_1\neq x_2 \in h^{-1}(y_0)}d(x_1,x_2)
    \]
    and hence $\#h^{-1}(y)\geq n_0$.
    Since $G$ acts on $X$ by homeomorphisms, we actually have $\#h^{-1}(y)\geq n_0$
    for each $y$ in the invariant set $A=\bigcup_{g\in G} g\, (B_\delta(y_0)\cap K)$.
    By definition of $K$, $\mu(B_\delta(y_0)\cap K)>0$ so that $A$ is of full
    measure, since $\mu$ is ergodic.

    Set $\mc N(K)\=\{n\in \N\: \text{there is } y\in K \text{ such that } \#h^{-1}(y)=n\}$.
    If we can show that $\mc N(K)$ is bounded, the above proves the statement.
    Assume for a contradiction that $\mc N(K)$ is unbounded.
    The above shows: for all $n\in \mc N(K)$, we have $\mu(\{y\in Y\:\#h^{-1}(y)\geq n\})=1$ or, in other words,
    $\mu(\{y\in Y\:\#h^{-1}(y)< n\})=0$.
    Hence, $\mu(Y')=\mu(\bigcup_{n\in \mc N(K)}\{y\in Y\:\#h^{-1}(y)< n\})=0$ which is an obvious contradiction.
\end{proof}

\begin{theorem}
    Let $(X,G)$ be an irregular extension of $(Y,G)$ via the factor map $h:X\to Y$,
    that is, $h(\mu)(\{y\in h(X)\: \#h^{-1}(y)>1\})>0$ for some $G$-invariant measure $\mu$ on $X$.
    If $h(\mu)$-almost all fibers of $h$ are finite, then $h$ is not a topo-isomorphy.
\end{theorem}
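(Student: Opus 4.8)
The plan is to produce a \emph{single} $G$-invariant measure on $X$ with respect to which $h$ fails to be an isomorphism $\bmod\,0$, which by the definition of topo-isomorphy is enough. Set
\[
    B \= \{y\in Y \:\; 1 < \#h^{-1}(y) < \infty\}.
\]
As in the proof of Lemma~\ref{lem: almost surely same cardinality in fibres}, $h$ induces a Borel measurable map $\gamma\: Y\to \mathcal K(X)$ into the space of compact subsets of $X$ (with the Hausdorff metric) via $\gamma(y)=h^{-1}(y)$; hence $B$ is Borel. The hypothesis that $h(\mu)$-almost every fiber is finite gives $h(\mu)(\{y\:\#h^{-1}(y)<\infty\})=1$, while irregularity gives $h(\mu)(\{y\:\#h^{-1}(y)>1\})>0$, so $h(\mu)(B)>0$. (Optionally one may first use Lemma~\ref{lem: ergodic representation} to replace $\mu$ by an ergodic invariant measure still charging $h^{-1}(B)$ and then Lemma~\ref{lem: almost surely same cardinality in fibres} to assume $\#h^{-1}(y)\equiv n_0\ge 2$ for $h(\mu)$-a.e.\ $y$; this is not needed but trivializes the measurability bookkeeping below.)

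Next I would define, for Borel $A\ssq X$,
\[
    \hat\mu(A) \= \int_Y \frac{\#\big(h^{-1}(y)\cap A\big)}{\#h^{-1}(y)}\, dh(\mu)(y),
\]
which makes sense since $h(\mu)$-almost every fiber is finite and non-empty, and the integrand is Borel in $y$ because $\gamma$ is. One checks that $\hat\mu$ is a Borel probability measure on $X$ with $h(\hat\mu)=h(\mu)$ and with disintegration over $h(\mu)$ given by $\hat\mu_y=\frac{1}{\#h^{-1}(y)}\sum_{x\in h^{-1}(y)}\delta_x$. Moreover $\hat\mu$ is $G$-invariant: each $g\in G$ acts on $X$ by a homeomorphism with $g\,h^{-1}(y)=h^{-1}(gy)$ and so preserves fiber cardinalities, and a change of variables together with the $G$-invariance of $h(\mu)$ yields $\hat\mu(gA)=\hat\mu(A)$.

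Now suppose, for a contradiction, that $h$ is a topo-isomorphy; in particular $h$ is an isomorphism $\bmod\,0$ with respect to $\hat\mu$ and $h(\hat\mu)=h(\mu)$, so by Proposition~\ref{prop:topo-isomorphy-via-unitary-operators} the operator $U_{\hat\mu}\: L_2(Y,h(\mu))\to L_2(X,\hat\mu)$, $f\mapsto f\circ h$, is unitary, hence surjective. A standard measurable selection argument (embed $X$ into $[0,1]^{\N}$ and let $s(y)$ be the lexicographically least point of $h^{-1}(y)$) gives a Borel map $s\: B\to X$ with $h\circ s=\id_B$; put $E\=\{x\in h^{-1}(B)\: x=s(h(x))\}$, a Borel subset of $X$ meeting every fiber $h^{-1}(y)$, $y\in B$, in exactly one point. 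Then $\mathbf{1}_E\in L_2(X,\hat\mu)$, so surjectivity of $U_{\hat\mu}$ forces $\mathbf{1}_E=f\circ h$ $\hat\mu$-a.e.\ for some $f\in L_2(Y,h(\mu))$; unwinding the disintegration $\hat\mu=\int\hat\mu_y\,dh(\mu)(y)$, this means that for $h(\mu)$-almost every $y$ the restriction of $\mathbf{1}_E$ to $h^{-1}(y)$ is $\hat\mu_y$-almost everywhere constant. But for every $y\in B$ the measure $\hat\mu_y$ assigns mass $1/\#h^{-1}(y)>0$ to the point $s(y)\in E$ and also to each of the $\#h^{-1}(y)-1\ge 1$ remaining points of $h^{-1}(y)$, none of which lies in $E$; hence $\mathbf{1}_E$ is not $\hat\mu_y$-a.e.\ constant for \emph{any} $y\in B$. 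Since $h(\mu)(B)>0$, this is the desired contradiction, so $h$ is not a topo-isomorphy.

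The only genuine obstacle is the measurability bookkeeping---that $B$ is Borel, that $y\mapsto\hat\mu_y$ is Borel (so $\hat\mu$ is well defined with the stated disintegration), and that a Borel fiber selection exists---all of which follow routinely from the Borel measurability of $\gamma\: Y\to\mathcal K(X)$ already used in Lemma~\ref{lem: almost surely same cardinality in fibres}; first reducing to an ergodic measure of constant fiber size via Lemmas~\ref{lem: ergodic representation} and~\ref{lem: almost surely same cardinality in fibres} makes even these points essentially immediate.
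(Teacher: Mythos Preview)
Your proof is correct, but it takes a genuinely different route from the paper's. The paper first reduces to an ergodic $\mu$ and assumes $h$ is a topo-isomorphy; it then uses the almost-everywhere equivariant section $\gamma\colon Y\to X$ that comes \emph{for free} from this assumption, and defines a second invariant measure $\nu$ by averaging over the fiber points \emph{other than} $\gamma(y)$. One then has two distinct ergodic measures $\mu\neq\nu$ with $h(\mu)=h(\nu)$, contradicting Proposition~\ref{prop: unitarity implies distant ergodic components}. By contrast, you first build the fiber-uniform invariant measure $\hat\mu$ (independently of any topo-isomorphy assumption), then produce a Borel section $s$ by measurable selection and show directly that $\mathbf 1_E$ with $E=s(B)$ is not in the range of $U_{\hat\mu}$, contradicting surjectivity. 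Your argument is more self-contained---it bypasses the ergodic reduction and Proposition~\ref{prop: unitarity implies distant ergodic components}---at the cost of invoking a measurable selection theorem; the paper's argument avoids selection entirely because the section is handed to it by the topo-isomorphy, but in exchange relies on the earlier structural Proposition~\ref{prop: unitarity implies distant ergodic components}.
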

\begin{proof}
    We may assume without loss of generality that $\mu$ is ergodic, because of
    Lemma \ref{lem: ergodic representation}.
    For a contradiction, assume that $(X,G)$ is a topo-isomorphic extension
    of $(Y,G)$ via $h$.

    By definition of topo-isomorphy, there is a Borel measurable map $\gamma:Y \to X$
    such that for all $g\in G$ we have $\gamma(gy)=g\gamma(y)$ for $h(\mu)$-almost all $y$
    and $\int\!\varphi\, d\mu=\int\!\varphi\circ\gamma \,dh(\mu)$ for every $\varphi\in\mc C(X)$.
    By Lemma~\ref{lem: almost surely same cardinality in fibres}, we further have that $h(\mu)$-almost
    all fibers are of equal cardinality $n_0>1$.
    By the Riesz Representation Theorem, we can define a probability measure $\nu$ on $X$ by
    \[
        \varphi\longmapsto\int\limits_{Y}\!\frac{1}{n_0-1}\cdot
            \sum\limits_{\mathclap{\substack{x\in h^{-1}(y)\\x\neq\gamma(y)}}}\varphi(x)\,dh(\mu)(y)
            \qquad(\varphi\in \mc C(X)).
    \]
    Observe that $\nu$ is $G$-invariant and $\nu\neq\mu$.
    Moreover, $h(\nu)=h(\mu)$ and this implies $\nu$ is ergodic since $h$ is assumed to be a topo-isomorphy.
    However, this yields a contradiction, according to
    Proposition \ref{prop: unitarity implies distant ergodic components}.
\end{proof}

We  immediately obtain the next two statements, using
Corollary~\ref{cor: 1} for the second one.

\begin{corollary}
    Assume $(X,G)$ has a unique $G$-invariant measure $\mu$ and is an irregular
    topo-isomorphic extension of a mean equicontinuous system $(Y,G)$ via a
    factor map $h:X\to Y$.
    Then for $h(\mu)$-almost every $y\in Y$ we have that $h^{-1}(y)$ is infinite.
\end{corollary}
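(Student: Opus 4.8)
The plan is to obtain this corollary as a direct consequence of the preceding theorem, once we add the observation that $h(\mu)$ is ergodic. First I would note that, since $(X,G)$ has a unique $G$-invariant measure $\mu$, this measure is automatically ergodic, and hence so is its push-forward $h(\mu)$ (a $G$-invariant measure on $Y$, ergodicity of the push-forward following from equivariance of $h$). Next I would record two elementary facts about the set
\[
    Y_\infty\=\{y\in Y\;|\;\#h^{-1}(y)=\infty\}.
\]
It is $G$-invariant: since $G$ acts by homeomorphisms and $h(gx)=gh(x)$, each $g\in G$ restricts to a bijection $h^{-1}(y)\to h^{-1}(gy)$, so $\#h^{-1}(y)=\#h^{-1}(gy)$. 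And it is Borel: the fibre map $y\mapsto h^{-1}(y)$ is Borel measurable into the space of compact subsets of $X$ (as used in the proof of Lemma~\ref{lem: almost surely same cardinality in fibres}) and $\{K\;|\;\#K\leq n\}$ is closed in the Hausdorff metric, so $Y\setminus Y_\infty=\bigcup_{n\in\N}\{y\;|\;\#h^{-1}(y)\leq n\}$ is Borel.

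The core of the argument is then a one-line contradiction. By ergodicity of $h(\mu)$, the invariant Borel set $Y_\infty$ has $h(\mu)$-measure $0$ or $1$. Assume $h(\mu)(Y_\infty)=0$; then $h(\mu)$-almost every fibre of $h$ is finite. But $(X,G)$ is an irregular extension of $(Y,G)$ and $\mu$ is the only $G$-invariant measure, so the preceding theorem applies (with this $\mu$) and yields that $h$ is not a topo-isomorphy, contradicting the hypothesis that $(X,G)$ is a topo-isomorphic extension of $(Y,G)$ via $h$. Hence $h(\mu)(Y_\infty)=1$, which is exactly the assertion.

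I do not expect a genuine obstacle here; the proof is bookkeeping around the preceding theorem. The only points needing a little care are the measurability and $G$-invariance of $Y_\infty$ (which justify invoking ergodicity of $h(\mu)$) and checking that the hypotheses of the preceding theorem are met -- irregularity is assumed outright, and the finiteness of $h(\mu)$-almost all fibres is precisely the assumption made for contradiction. I would also remark that mean equicontinuity of $(Y,G)$ is not actually used in this particular deduction; it enters only in the companion statement, where Corollary~\ref{cor: 1} is first invoked to promote a mean equicontinuous extension with the same MEF to a topo-isomorphic extension before the theorem is applied.
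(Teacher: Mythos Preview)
Your proof is correct and matches the paper's intent: the paper simply says this corollary is ``immediately obtained'' from the preceding theorem, and your argument spells out exactly the standard step that makes this immediate---namely, that the contrapositive of the theorem gives only $h(\mu)(Y_\infty)>0$, and ergodicity of $h(\mu)$ together with invariance and measurability of $Y_\infty$ upgrades this to $h(\mu)(Y_\infty)=1$. Your observation that the mean equicontinuity of $(Y,G)$ plays no role in this particular deduction is also accurate.
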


\begin{corollary}
    Suppose that $(X,G)$ is an irregular extension of $(Y,G)$ via the factor map
    $h:X\to Y$ and suppose the MEF of $(X,G)$ and $(Y,G)$ coincide.
    If $(Y,G)$ is mean equicontinuous and the fibers of $h$ are finite,
    then $(X,G)$ can not be mean equicontinuous.
\end{corollary}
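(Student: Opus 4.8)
The plan is to argue by contradiction, essentially juxtaposing the preceding theorem with Corollary~\ref{cor: 1}. Suppose, contrary to the claim, that $(X,G)$ is mean equicontinuous. By assumption $(X,G)$ is an extension of the mean equicontinuous system $(Y,G)$ and the maximal equicontinuous factors of $(X,G)$ and $(Y,G)$ coincide, so Corollary~\ref{cor: 1} applies and tells us that $(X,G)$ is a topo-isomorphic extension of $(Y,G)$. Inspecting the proof of Theorem~\ref{thm:characterisation of mean equicontinuous extensions via same mef}, on which Corollary~\ref{cor: 1} rests, one sees that the topo-isomorphy is realized by the given factor map $h\colon X\to Y$ itself; thus $h$ is a topo-isomorphy.

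On the other hand, $(X,G)$ is by hypothesis an \emph{irregular} extension of $(Y,G)$ via $h$, i.e.\ there is a $G$-invariant measure $\mu$ on $X$ with $h(\mu)(\{y\in h(X)\colon \#h^{-1}(y)>1\})>0$. Since every fiber of $h$ is finite, in particular $h(\mu)$-almost every fiber of $h$ is finite, so the preceding theorem applies and yields that $h$ is \emph{not} a topo-isomorphy. This contradicts the conclusion of the previous paragraph, and hence $(X,G)$ cannot be mean equicontinuous.

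Since the argument merely combines two results established above, there is no genuine obstacle here. The only point deserving a moment's attention is the identification of the topo-isomorphy furnished by Corollary~\ref{cor: 1} with the specific factor map $h$ appearing in the statement (rather than with some other, abstract factor map onto $(Y,G)$); but this is transparent from the proof of Theorem~\ref{thm:characterisation of mean equicontinuous extensions via same mef}, which works throughout with the given map $h$. One should also note that the hypothesis ``all fibers finite'' is only used through its trivial consequence ``$h(\mu)$-almost all fibers finite'', which is exactly what the preceding theorem requires.
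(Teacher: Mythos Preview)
Your argument is correct and is exactly the one the paper intends: the paper simply says the corollary is immediate from the preceding theorem together with Corollary~\ref{cor: 1}, and your contradiction argument spells this out. Your remark that the topo-isomorphy produced by Corollary~\ref{cor: 1} is the given factor map $h$ (as is clear from the proof of Theorem~\ref{thm:characterisation of mean equicontinuous extensions via same mef}) is the only point requiring care, and you handle it correctly.
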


An example fitting into the setting of the second corollary is the
Thue-Morse subshift which is a 2-1 extension of a regular Toeplitz
subshift with the same maximal equicontinuous  factor (see, for instance, \cite{BaakeGrimm2013}
for more information).
In particular, we get that the Thue-Morse system is not mean equicontinuous.

\subsection{Maximally almost periodic groups}\label{MAP-groups}

In this last section we show that if a group $G$ acts minimally, mean equicontinuously
and effectively on a compact metric space $(X,d)$, then it is necessarily maximally
almost periodic.

Recall that $G$ acts \emph{effectively} on $X$ if for each
$g\in G$, there is $x\in X$ with $gx\neq x$.
Recall further that a topological group $G$ is \emph{maximally almost periodic (MAP)}
if $G$ admits a continuous and injective homomorphism into a compact Hausdorff group,
see for instance \cite{Neumann1934}.
Note that a locally compact MAP group is necessarily unimodular \cite{LeptinRobertson1968}.
We will make use of the following characterization of maximal almost periodicity
\cite{Huang1979}: a topological group $G$ is MAP if and
only if $G$ admits an equicontinuous and effective action on a compact Hausdorff space.

\begin{theorem}\label{thm: effecivenes is inherited by the MEF}
    Let $(X,G)$ be a dynamical system and denote by $(\mef,G)$ its maximal
    equicontinuous factor.
    If $(X,G)$ is mean equicontinuous, allows for an invariant measure of full
    support and $G$ acts effectively on $X$, then $G$ also acts effectively on $\mef$.
    In particular, this implies that $G$ is maximal almost periodic and unimodular.
\end{theorem}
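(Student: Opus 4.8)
The plan is to establish the slightly sharper statement that any $g\in G$ acting trivially on $\mef$ already acts trivially on $X$. Granting this, effectiveness of the $G$-action on $X$ forces such a $g$ to be the neutral element, so the $G$-action on $\mef$ is effective. The remaining assertions then follow from the quoted results: $(\mef,G)$ is an equicontinuous action on the compact metric — hence compact Hausdorff — space $\mef$, so if it is also effective then $G$ is maximal almost periodic by Huang's characterisation \cite{Huang1979}, and a locally compact MAP group is unimodular by \cite{LeptinRobertson1968}.

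The core argument is short. Since $(X,G)$ is mean equicontinuous, Theorem~\ref{thm: mean equicontinuity implies topo-isomorphy} shows that the factor map $\pi\colon X\to\mef$ is a topo-isomorphy with respect to the given full-support invariant measure $\mu$; in particular, by the definition of an isomorphism $\bmod\,0$ (cf.\ Proposition~\ref{prop:topo-isomorphy-via-unitary-operators}), there is a Borel set $M\ssq X$ with $\mu(M)=1$ on which $\pi$ is injective. Now suppose $g\in G$ acts trivially on $\mef$, i.e.\ $g\theta=\theta$ for all $\theta\in\mef$. Then $\pi(gx)=g\pi(x)=\pi(x)$ for every $x\in X$. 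For $\mu$-almost every $x$ we have both $x\in M$ and $gx\in M$ — the latter because $\mu$ is $G$-invariant, so $\mu(g^{-1}M)=\mu(M)=1$ — and since $\pi$ is injective on $M$ while $\pi(x)=\pi(gx)$, we get $gx=x$. Hence $gx=x$ for $\mu$-almost every $x\in X$.

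It remains to pass from ``$\mu$-almost every'' to ``every''. The set $\{x\in X\mid gx=x\}$ is the preimage of $\{0\}$ under the continuous function $x\mapsto d(gx,x)$, hence closed; it has full $\mu$-measure; and since $\supp(\mu)=X$ and $\supp(\mu)$ is the smallest closed set of full measure, this set must be all of $X$, so $gx=x$ for every $x\in X$. Effectiveness of the action on $X$ then gives $g=e_G$, as desired. I do not expect a serious obstacle: the heavy lifting is done by Theorem~\ref{thm: mean equicontinuity implies topo-isomorphy}, and the only genuinely indispensable ingredient beyond it is the full support of $\mu$, which is exactly what powers the final ``almost every $\Rightarrow$ every'' step (and without which the conclusion fails, since the effective part of the action could be supported off $\supp(\mu)$).
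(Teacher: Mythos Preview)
Your proof is correct and follows essentially the same approach as the paper's: both use that mean equicontinuity makes $\pi$ a topo-isomorphy, deduce from injectivity of $\pi$ on a full-measure set $M$ (together with $G$-invariance of $\mu$) that any $g$ acting trivially on $\mef$ fixes $\mu$-almost every point of $X$, and then upgrade to every point via full support and continuity. Your write-up is, if anything, slightly more explicit about why the fixed-point set is closed, but the argument is the same.
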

\begin{proof}
    As before, we denote by $\pi$ a factor map from $X$ to $\mef$.
    Let $\mu$ be an invariant measure with full support.
    Since $\pi$ is a topo-isomorphy, there
    are subsets $M\ssq X$ and $N\ssq\mef$ of full $\mu$- and $\pi(\mu)$-measure, respectively, such that
    the restriction of $\pi$ to $M$ is a bijection from $M$ onto $\pi(M)=N$.

    Now, assume there is $g\in G$ with $gy=y$ for all $y\in\mef$.
    Observe that such $g$ has to verify $gx=x$ for $\mu$-almost all $x\in M$, since $\pi$ restricted
    to $M$ is injective and since --by the invariance of $\mu$-- almost every point of $M$ is mapped into $M$ under the action of $g$.
    As $\mu$ is of full support, every full-measure set is dense in $X$.
    Thus, the continuity of $g$ implies $gx=x$ for all $x\in X$.
    As $G$ acts effectively on $X$, this gives $g=e$.
\end{proof}

Recall that for a minimal dynamical system all measures have full support.

\begin{corollary}
    If $G$ acts minimally, mean equicontinuously and effectively on $X$, then
    $G$ is maximal almost periodic and unimodular.
\end{corollary}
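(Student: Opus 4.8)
The plan is to deduce this immediately from Theorem~\ref{thm: effecivenes is inherited by the MEF} by checking its hypotheses. First I would recall that $G$ is a locally compact $\sigma$-compact amenable group, so by amenability the minimal system $(X,G)$ carries at least one $G$-invariant probability measure $\mu$. Since $(X,G)$ is minimal, the support $\supp(\mu)$ is a non-empty, closed, $G$-invariant subset of $X$, and minimality forces $\supp(\mu) = X$; thus $\mu$ has full support. Together with the assumptions that $(X,G)$ is mean equicontinuous and that $G$ acts effectively on $X$, all hypotheses of Theorem~\ref{thm: effecivenes is inherited by the MEF} are met.

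Applying that theorem then yields directly that $G$ acts effectively on the maximal equicontinuous factor $(\mef,G)$, and that $G$ is maximally almost periodic and unimodular. So the corollary follows with essentially no further work; the only thing to spell out is the (entirely routine) observation that minimality gives full support, which is precisely the remark preceding the corollary. One could equivalently phrase the argument as: the effective equicontinuous action of $G$ on the compact Hausdorff space $\mef$ witnesses, via Huang's characterization of MAP groups, that $G$ is MAP, and then local compactness together with the Leptin--Robertson result gives unimodularity.

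There is no real obstacle here; the content all sits in Theorem~\ref{thm: effecivenes is inherited by the MEF}. If anything, the single point requiring a moment's care is that the conclusion "$G$ acts effectively on $\mef$" genuinely needs the full-support hypothesis — a general (non-minimal) mean equicontinuous system need not have an invariant measure of full support on all of $X$, which is why the corollary is stated for minimal systems and why the preceding remark about full support of invariant measures for minimal systems is invoked.
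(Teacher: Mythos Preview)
Your proposal is correct and matches the paper's approach exactly: the paper simply remarks that for a minimal dynamical system every invariant measure has full support and then states the corollary as an immediate consequence of Theorem~\ref{thm: effecivenes is inherited by the MEF}, which is precisely what you do.
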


We would like to close with a partial answer to the following question \cite[Question~8.1]{GlasnerMegrelishvili2018}:
which discrete countable groups $G$ have effective tame minimal actions?
Here, the term \emph{tame} refers to a certain low dynamical complexity of a dynamical system (see, e.g., \cite{Glasner2018}).
Now, according to \cite[Corollary~5.4 (2)]{Glasner2018}, if $(X,G)$ is tame and $G$ amenable, then $(X,G)$ is a topo-isomorphic
extension of its MEF and hence mean equicontinuous, due to Theorem~\ref{thm:topo-isomorphy implies mean equicontinuity} (for
$\Z$-actions, see also \cite[Corollary~5.10]{Glasner2018}).
Thus, from Theorem~\ref{thm: effecivenes is inherited by the MEF}
we obtain that among the amenable, discrete countable groups exactly the maximally almost periodic ones allow
for an effective tame minimal action.

\small
\bibliography{lit}
\bibliographystyle{alpha}

\end{document}